\numberwithin{equation}{section}
\titleformat{\subsection}[runin]
  {\normalfont\bfseries}{\thesubsection}{1em}{}
\newtheorem{definition}{\textbf{Definition}}[section]
\newtheorem{theorem}[definition]{\textbf{Theorem}}
\newtheorem{corollary}[definition]{\textbf{Corollary}}
\newtheorem{lemma}[definition]{\textbf{Lemma}}
\newtheorem{proposition}[definition]{\textbf{Proposition}}
\newtheorem{example}[definition]{\textbf{Example}}
\newtheorem{remark}[definition]{\textbf{Remark}}
\newtheorem{letterthm}{Theorem}
\def\d{\mathrm{d}}
\def\car{\curvearrowright}
\def\supp{\mathrm{supp \, }}
\def\prob{\mathrm{Prob}}
\def\BL{B(L^2(M))}
\def\CS{\mathcal{S}}
\def\CB{\mathcal{B}}
\def\CP{\mathcal{P}}
\def\CA{\mathcal{A}}
\def\har{\mathrm{Har}}
\def\id {\mathrm{id}}
\def\lag {\langle}
\def\rag {\rangle}
\def\N {\mathbb{N}}
\def\R {\mathbb{R}}
\def\C {\mathbb{C}}
\def\UCP {\mathrm{UCP}}
\def\FB {\partial_F \Gamma }
\def\CFB {C(\partial_F \Gamma)}
\def\rad {\mathrm{Rad}}
\def\SA {\mathrm{SA}}
\def\SAAM{\mathrm{SA}_{\mathrm{am}}}
\def\AD{\mathrm{Ad}}
\def\TN{\mathscr{N}}
\def\CR{\mathcal{R}}
\def\SR{\mathrm{SR}}
\title[Noncommutative topological boundaries and amenable IRIAs]{Noncommutative topological boundaries and amenable invariant random intermediate subalgebras}
\author[Shuoxing Zhou]{Shuoxing Zhou\\With an appendix by Tattwamasi Amrutam and Yongle Jiang}
\address{\'Ecole Normale Sup\'erieure\\ D\'epartement de math\'ematiques et applications\\ 45 rue d'Ulm\\ 75230 Paris Cedex 05\\ FRANCE}
\email{shuoxing.zhou@ens.psl.eu}
\address{Institute of Mathematics of the Polish Academy of Sciences, ul.~\'Sniadeckich 8, 00--656 Warszawa, Poland}
\email{tattwamasiamrutam@impan.pl}
\address{School of Mathematical Sciences, Dalian University of Technology, Dalian, 116024, China}
\email{yonglejiang@dlut.edu.cn}
\begin{document}

\begin{abstract}
As an analogue of the topological boundary of discrete groups $\Gamma$, we define the noncommutative topological boundary of tracial von Neumann algebras $(M,\tau)$ and apply it to generalize the main results of \cite{AHO23}, showing that for a trace preserving action  $\Gamma\car(A,\tau_A)$ on an amenable tracial von Neumann algebra, any $\Gamma$-invariant amenable intermediate subalgebra between $A$ and $\Gamma\ltimes A$ is necessarily a subalgebra of $\rad(\Gamma)\ltimes A$. By taking $(A,\tau_A)=L^\infty(X,\nu_X)$ for a free pmp action $\Gamma\car(X,\nu_X)$, we obtain a similar result for the invariant subequivalence relations of $\CR_{\Gamma\car X}$.
\end{abstract}

\maketitle

\section{Introduction}
In the study of countable discrete groups $\Gamma$, the topological $\Gamma$-space is a very useful tool. Topological boundaries are particularly important among all kinds of topological $\Gamma$-spaces; for example, they have played a central role in the study of C$^*$-simplicity \cite{KK14, BKKO14}. When it comes to a tracial von Neumann algebra $(M,\tau)$, the noncommutative analogue of topological actions $\Gamma\car X$ is the C$^*$-inclusions $M\subset\CA$, i.e., $M$ can be embedded into the C$^*$-algebra $\CA$ as a C$^*$-subalgebra. In this paper, we extend several important topological concepts from group dynamical systems to the noncommutative setting and present some of their applications.

The study of maximal amenable subalgebras (e.g. \cite{Pop83}) and invariant intermediate subalgebras under group actions (e.g. \cite{Suz18, BH22, AH24}) is an important topic in von Neumann algebras. In this paper, we explore the intersection of these two areas by applying noncommutative topological boundaries to investigate maximal amenable invariant intermediate subalgebras. This leads to the following main theorem.
\begin{letterthm}[Theorem \ref{max amenable}]\label{Thm B}
 Let $\Gamma$ be a countable discrete group, $(A,\tau_A)$ be an amenable separable tracial von Neumann algebra, and $\Gamma\car(A,\tau_A)$ be a trace preserving action by $\ast$-isomorphisms. Assume that $N\subset\Gamma\ltimes A $ is an amenable $\Gamma$-invariant von Neumann subalgebra with $A\subset N\subset \Gamma\ltimes A$. Then we must have 
$$N\subset \rad(\Gamma)\ltimes A .$$
In particular, when $\rad(\Gamma)=\{e\}$, $A$ is a maximal invariant amenable subalgebra.
\end{letterthm}

Denote by $\SA(\Gamma\ltimes A )$ the space of von Neumann subalgebras of $\Gamma\ltimes A$ with the Effros-Mar\'{e}chal topology. Then $\SA(\Gamma\ltimes A )$ admits a natural continuous $\Gamma$-action. We define an \textbf{amenable $\Gamma$-invariant random intermediate subalgebra (IRIA)} to be a $\Gamma$-invariant Borel probability measure $\mu\in \prob(\SA(\Gamma\ltimes A ))$ such that $\mu$-a.e. $N\in\SA( \Gamma\ltimes A )$ is amenable with $A\subset N\subset \Gamma\ltimes A$. We also have the following theorem as a random version of Theorem \ref{Thm B}.
\begin{letterthm}[Theorem \ref{amenable IRS for crossed products}]\label{Thm C}
With the same conditions in Theorem \ref{Thm B}, let $\mu\in\prob (\SA( {\Gamma\ltimes A} ))$ be an amenable $\Gamma$-IRIA. Then for $\mu$-a.e. $N\in\SA( \Gamma\ltimes A )$, we have
$$N\subset \rad(\Gamma)\ltimes A .$$
In particular, when $\rad(\Gamma)=\{e\}$, we must have $\mu=\delta_A$.
\end{letterthm}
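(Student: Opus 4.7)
The plan is to reduce Theorem C to its deterministic counterpart Theorem B via a direct integral construction on the IRIA.

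Put $(Y,\nu) := (\SA(\Gamma\ltimes A),\mu)$, a standard Borel probability $\Gamma$-space under the conjugation action $g\cdot N := u_g N u_g^*$, with $\nu$ invariant by hypothesis; let $\beta$ be the corresponding pmp action of $\Gamma$ on $L^\infty(Y,\nu)$. Form the amenable tracial von Neumann algebra $\tilde A := L^\infty(Y,\nu)\bar\otimes A$ equipped with the diagonal $\Gamma$-action $\beta\otimes\alpha$. The inclusion $\tilde A \subset \Gamma\ltimes\tilde A$ then falls under Theorem \ref{Thm B}.

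The crux is to package the IRIA data into a single $\Gamma$-invariant amenable intermediate subalgebra $\tilde N$ with $\tilde A \subset \tilde N \subset \Gamma\ltimes\tilde A$, via a direct integral of the tautological $\Gamma$-equivariant Borel field $y\mapsto N_y := y$. The $\Gamma$-invariance of $\tilde N$ follows from the equivariance $N_{g\cdot y} = u_g N_y u_g^*$ matched with the Koopman action on $Y$, so that the diagonal action absorbs the randomness. Amenability of $\tilde N$ follows from fiberwise amenability (by the IRIA hypothesis) together with the standard result that a direct integral of amenable von Neumann algebras over a standard probability space is amenable. Once $\tilde N$ is in hand, Theorem \ref{Thm B} yields $\tilde N \subset \rad(\Gamma)\ltimes\tilde A$, and since $\rad(\Gamma)\ltimes\tilde A$ identifies with the direct integral $\int^\oplus_Y(\rad(\Gamma)\ltimes A)\,d\nu(y)$, disintegration gives $N_y\subset\rad(\Gamma)\ltimes A$ for $\nu$-a.e.\ $y$. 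When $\rad(\Gamma)=\{e\}$, this forces $N_y = A$ a.e., i.e., $\mu=\delta_A$.

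The main obstacle is realizing $\tilde N$ as a genuine von Neumann subalgebra of the diagonal crossed product $\Gamma\ltimes\tilde A$, rather than of the a priori different algebra $L^\infty(Y)\bar\otimes(\Gamma\ltimes A)$ in which the naive direct integral at the Hilbert-space level lives; the two ambient algebras coincide only up to a Fell-type identification that absorbs the nontrivial pmp action $\beta$, and one must verify that under this identification the $\Gamma$-equivariance of the field $\{N_y\}$ translates into the inner $\Gamma$-invariance of the resulting subalgebra of $\Gamma\ltimes\tilde A$. The attendant measurability of sections $y\mapsto T_y\in N_y$, supplied by the Effros-Mar\'echal Borel structure on $\SA(\Gamma\ltimes A)$, is standard but needs to be handled explicitly to ensure that $\tilde N$ is generated by its measurable field in the correct sense for the final disintegration to be valid.
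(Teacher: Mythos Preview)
Your reduction strategy is natural, but the ``Fell-type identification'' you invoke does not exist, and this is a genuine obstruction rather than a technicality to be checked. The algebras $\Gamma\ltimes\tilde A$ (with the diagonal action $\beta\otimes\alpha$) and $L^\infty(Y)\bar\otimes(\Gamma\ltimes A)$ are not isomorphic in general: the latter always has $L^\infty(Y)$ in its center, while in the former $L^\infty(Y)\subset\tilde A$ is moved nontrivially by $\mathrm{Ad}(\tilde u_g)$ and $\Gamma\ltimes\tilde A$ is a factor whenever the diagonal action is free and ergodic. Concretely, with $A=\C$ and $\Gamma\curvearrowright(Y,\nu)$ any free ergodic pmp action, $\Gamma\ltimes\tilde A$ is a $\mathrm{II}_1$ factor while $L^\infty(Y)\bar\otimes L(\Gamma)$ has diffuse center. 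Fell absorption is a statement about tensoring a representation with the left regular one; it does not manufacture an isomorphism of these two crossed products.

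Without such an identification the plan collapses on both ends. Your direct integral $\tilde N=\int^\oplus_Y N_y\,d\nu(y)$ lives only in $L^\infty(Y)\bar\otimes(\Gamma\ltimes A)$, where the relevant $\Gamma$-action $\beta_g\otimes\mathrm{Ad}(u_g)$ is \emph{outer}, so Theorem~\ref{Thm B} (which concerns inner invariance inside a crossed product) does not apply. If instead you take the trivial action on $Y$ so that $\Gamma\ltimes\tilde A\cong L^\infty(Y)\bar\otimes(\Gamma\ltimes A)$, then the inner $\Gamma$-action becomes $\mathrm{id}\otimes\mathrm{Ad}(u_g)$, which sends the fiber $N_y$ to $u_gN_yu_g^*=N_{g\cdot y}$; hence $\tilde N$ is \emph{not} $\Gamma$-invariant unless the field is essentially constant. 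Either way the hypotheses of Theorem~\ref{Thm B} fail, and since $\Gamma\ltimes\tilde A$ is not a direct integral over $Y$ (as $L^\infty(Y)$ is not central there), the final disintegration step has no meaning either. The paper circumvents all of this by not reducing to Theorem~\ref{Thm B}: it builds a $\Gamma$-equivariant measurable map $\Phi$ from $\SAAM(A,M)$ into the space $\mathfrak{C}(\prob(\partial_F\Gamma))$ of compact convex subsets of $\prob(\partial_F\Gamma)$, invokes the lemma of Bader--Duchesne--L\'ecureux \cite[Lemma~2.3]{BDLW16} that forces $\Phi_*\mu=\delta_{\prob(\partial_F\Gamma)}$, and then reruns the boundary argument of Theorem~\ref{Thm B} on the resulting $\mu$-conull set.
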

Theorem \ref{Thm B} and \ref{Thm C} generalize the main results of \cite{AHO23}, where the case that $A=\C$ is proved. Namely, with the tool of noncommutative topological boundaries (the following Theorem \ref{Thm A}), we can provide a new proof that holds for general amenable $A$.

Given a separable tracial von Neumann algebra $(M,\tau)$, we define a \textbf{noncommutative topological $M$-boundary} (Section \ref{Section NC topological boundary}) to be a C$^*$-algebra $\CA$ with an \textbf{essential C$^*$-inclusion $M\subset \CA$} in the sense of \cite{Ham79} (see also Subsection \ref{def ess}). Similar approaches using essential extensions to define boundary-type objects have also appeared in other contexts, such as \cite{KK14, BK21, Bor19, KKSV22}.

Let $X$ be a topological $\Gamma$-boundary and take $x\in X$. Through the Poisson transform $P_{\delta_x}: f\in C(X)\mapsto(g\mapsto f(gx))\in \ell^\infty(\Gamma)$, $C(X)$ can be viewed as a C$^*$-subalgebra of $\ell^\infty(\Gamma)$. Then we have the following theorem as an example of the noncommutative topological boundary.
\begin{letterthm}[Theorem \ref{Gamma A boundary}]\label{Thm A}
 With the same conditions in Theorem \ref{Thm B}, the C$^*$-algebra $\CB_X\subset B(\ell^2(\Gamma)\otimes L^2(A))$ generated by $\Gamma\ltimes A$ and $C(X)\otimes1\subset \ell^\infty(\Gamma)\otimes1$ is a $\Gamma\ltimes A$-boundary.
\end{letterthm}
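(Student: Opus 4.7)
The plan is to verify the essentiality of the C$^*$-inclusion $\Gamma\ltimes A\subset \CB_X$ in the sense of Definition \ref{def NC topological boundary}, by combining Choi's multiplicative-domain theorem with the classical $\Gamma$-boundary property of $X$.

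First I would take an arbitrary ucp map $\phi\colon \CB_X\to D$ into some target unital C$^*$-algebra $D$ whose restriction to $\Gamma\ltimes A$ is isometric (the condition being verified). By Choi's multiplicative-domain theorem, $\Gamma\ltimes A$ lies in the multiplicative domain of $\phi$, so $\phi$ is a $\Gamma\ltimes A$-bimodule map. The restriction $\Phi(f):=\phi(f\otimes 1)$, $f\in C(X)$, therefore defines a $\Gamma$-equivariant ucp map $C(X)\to D$ (the $\Gamma$-action on $D$ being conjugation by $\phi(u_g)=u_g$), and its image commutes with the image of $A\subset \Gamma\ltimes A$, since $C(X)\otimes 1$ commutes with $A$ inside $B(\ell^2(\Gamma)\otimes L^2(A))$ and $A$ sits in the multiplicative domain.

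The heart of the proof is to show that $\Phi$ is automatically a $*$-homomorphism. This is where the hypothesis that $X$ is a $\Gamma$-boundary plays its role: strong proximality and minimality of $\Gamma\car X$ rigidify $\Gamma$-equivariant ucp maps out of $C(X)$, and the centralizing relation with the image of $A$ rules out the non-multiplicative Poisson-type extensions that would otherwise be available. Concretely, I would pass through the Furstenberg boundary $\FB$ --- every $\Gamma$-boundary admits a $\Gamma$-equivariant surjection from $\FB$, yielding a $\Gamma$-equivariant embedding $C(X)\hookrightarrow \CFB$ --- and invoke Hamana-style $\Gamma$-rigidity of $\CFB$, translated into the noncommutative setting via the $\mu$-proximality / USB framework of Section \ref{section prox usb}.

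Once $\Phi$ is multiplicative, $C(X)\otimes 1$ joins the multiplicative domain of $\phi$; the multiplicative domain is then a C$^*$-subalgebra containing both $\Gamma\ltimes A$ and $C(X)\otimes 1$, hence equals $\CB_X$, and $\phi$ is a $*$-homomorphism. Combined with the isometry of $\phi|_{\Gamma\ltimes A}$ and minimality of $\Gamma\car X$ (which prevents nontrivial $\Gamma$-invariant ideals of $\CB_X$ from being supported on proper subsets of $X$), this forces $\phi$ to be isometric on all of $\CB_X$, establishing essentiality. The main obstacle is precisely the rigidity step: in the purely commutative setting, $\Gamma$-equivariant ucp maps out of $C(X)$ into a $\Gamma$-C$^*$-algebra need not be multiplicative --- non-Dirac Poisson transforms $P_\nu$ provide counterexamples --- so the argument must genuinely combine the boundary property of $X$ with the $A$-centrality constraint on the image.
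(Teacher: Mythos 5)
Your proposal hinges on the claim that any $M$-bimodular ucp map $\phi$ on $\CB_X$ automatically restricts to a $*$-homomorphism on $C(X)\otimes 1$, with the boundary property of $X$ plus commutation with $\pi(A)$ supplying the rigidity. This step is false, and the counterexample you yourself mention is not ruled out by the $A$-centrality constraint. Take $\phi=\sum_i t_i\,\mathrm{Ad}(\rho_{g_i}\otimes u_{g_i})$ restricted to $\CB_X^x$, a finite convex combination with distinct $g_i\delta_x$: each $\mathrm{Ad}(\rho_{g}\otimes u_{g})$ is $M$-bimodular (since $\rho_g\otimes u_g\in JMJ\subset M'$), the combination sends $P_{\delta_x}(f)\otimes 1$ to $P_{\nu}(f)\otimes 1$ with $\nu=\sum_i t_i\,g_i\delta_x$ non-Dirac, and its image lies in $\ell^\wx(\Gamma)\otimes 1$, which \emph{does} commute with $\pi(A)$. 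So a legitimate element of $\UCP_M(\CB_X,\mathscr{A})$ fails to be multiplicative on $C(X)$, and no amount of rigidity can prove otherwise. (There is also a smaller slip at the start: isometry of $\phi|_{\Gamma\ltimes A}$ does not put $\Gamma\ltimes A$ in the multiplicative domain via Choi's theorem; you need $\phi|_M=\id_M$, which is what $\UCP_M$ actually requires.)

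The paper's mechanism is genuinely different and avoids this obstacle: it does not show that $\CP$ is a homomorphism, only that it is completely isometric, by \emph{post-composing} with a corrective map. One computes $E_0\circ\CP|_{C(X)}=P_\nu\otimes 1$ for some $\nu\in\prob(X)$, uses the boundary property to choose a net $(g_i)$ with $g_i\nu\to\delta_x$, and forms $\CP_\omega=\lim_\omega \mathrm{Ad}(\rho_{g_i}\otimes u_{g_i})$. Then $E_0\circ\CP_\omega\circ\CP|_{C(X)}=P_{\delta_x}\otimes 1$ is a homomorphism, so by the multiplicative-domain lemma the \emph{composite} $\CP_\omega\circ\CP$ equals $\id_{\CB_X}$ on $C(X)\otimes 1$ and on $M$, hence on all of $\CB_X$; this forces $\Vert\CP(T)\Vert=\Vert T\Vert$. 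Finally, your proof never uses amenability of $A$, but it is essential: the argument above only handles targets inside $\mathscr{A}=B(\ell^2(\Gamma))\otimes A$, and one needs a conditional expectation $E_{\mathscr{A}}:B(H)\to\mathscr{A}$ (available precisely because $A$ is amenable) to upgrade this to condition (ii) of Theorem \ref{eq def of boundary}, i.e.\ to arbitrary maps into $B(L^2(M))$.
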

In particular, by taking $A = \mathbb{C}$ in Theorem \ref{Thm A}, we obtain that the C$^*$-algebra $\mathcal{B}_X = C^*(L(\Gamma), C(X)) \subset B(\ell^2(\Gamma))$ is an $L(\Gamma)$-boundary. Results in the same spirit have appeared in other contexts. For example, in the setting of $\Gamma$-C$^*$-dynamics \cite[Theorem 3.4]{Ham85}, for a $\Gamma$-boundary $X$, the reduced crossed product $\Gamma \ltimes_r C(X)$ is a $\Gamma$-essential extension of $C_r^*(\Gamma)$. Moreover, in noncommutative Poisson boundary theory \cite[Theorem 4.1]{Izu04}, for a probability measure $\mu_\Gamma \in \mathrm{Prob}(\Gamma)$ with the $(\Gamma, \mu_\Gamma)$-Poisson boundary $(B, \nu_B)$, the crossed product $L(\Gamma \curvearrowright (B, \nu_B))$ coincides with the noncommutative Poisson boundary of $L(\Gamma)$.

For a countable nonsingular equivalence relation $\CR$, denote by $\SR(\CR)$ the space of subequivalence relations of $\CR$ (modulo null sets). The study of the space of subequivalence relations was initiated by Alexander Kechris, who showed that it can be endowed with a Polish topology in the pmp case \cite[Theorem 4.13]{Kec17}. Recently, Fran{\c{c}}ois Le Ma{\^\i}tre proved that it still admits a Polish topology in the nonsingular case \cite[Theorem A]{LM24}. In Section \ref{IRR}, we provide another description of this topology for the nonsingular case (Definition \ref{metric}), which is still equivalent to the Polish topology defined in \cite[Section 4.2]{Kec17}, \cite[Discussion before Lemma 3.18]{AFH24}, and \cite[Section 2]{LM24}. 

This Polish topology on $\SR(\CR)$ induces a standard Borel structure. Consider $\CR=\CR_{\Gamma\car X}$ for a free probability measure preserving (pmp) action $\Gamma \car (X,\nu_X)$. Then $\SR(\CR_{\Gamma\car X})$ admits a natural continuous $\Gamma$-action (see (\ref{action on SR})). As an analogue of invariant random subgroup/subalgebra, we define the invariant random subequivalence relation (IRR) on $\CR_{\Gamma\car X}$ (Definition \ref{def IRR}) to be a $\Gamma$-invariant probability measure $\mu\in\prob(\SR(\CR_{\Gamma\car X}))$. As a direct corollary of Theorem \ref{Thm C}, we have the following theorem regarding amenable IRRs.
    \begin{letterthm}[Theorem \ref{Thm IRR}]\label{letterthm IRR}
For a free pmp action $\Gamma \car (X,\nu_X)$, let $\mu\in\prob(\SR(\CR_{\Gamma\car X}))$ be a $\Gamma$-invariant random subequivalence relation (IRR) such that $\mu$-a.e. $\Bar{\CS}\in \SR(\CR_{\Gamma\car X})$ is amenable. Then for $\mu$-a.e. $\Bar{\CS}\in\SR(\CR_{\Gamma\car X})$, we have
$$\Bar{\CS}\subset \Bar{\CR}_{\rad(\Gamma)\car X}.$$
In particular, any amenable $\Gamma$-invariant subequivalence relation of $\CR_{\Gamma\car X}$ must be a subequivalence relation of $\CR_{\rad(\Gamma)\car X}$, up to null sets.
\end{letterthm}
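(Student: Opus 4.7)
The plan is to deduce Theorem \ref{letterthm IRR} directly from Theorem \ref{Thm C} by pushing forward $\mu$ through the canonical Feldman--Moore correspondence between pmp subequivalence relations and intermediate von Neumann subalgebras. Since $\Gamma\car(X,\nu_X)$ is free and pmp, setting $A=L^\wx(X,\nu_X)$ identifies $\Gamma\ltimes A$ with the Feldman--Moore algebra $L(\CR_{\Gamma\car X})$, in which $A$ sits as a Cartan subalgebra. To each subequivalence relation $\CS\subset\CR_{\Gamma\car X}$ one associates the intermediate subalgebra $L(\CS)$ generated by $A$ and the partial isometries indexed by $\CS$, giving a map
$$\Phi:\SR(\CR_{\Gamma\car X})\longrightarrow\SA(\Gamma\ltimes A),\qquad \Bar{\CS}\mapsto L(\CS).$$

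The first step is to record the properties of $\Phi$ needed for the reduction. I would verify that: (i) $\Phi$ is injective and inclusion-preserving in both directions, so $L(\CS_1)\subset L(\CS_2)$ if and only if $\Bar{\CS}_1\subset\Bar{\CS}_2$; (ii) $\Phi$ is $\Gamma$-equivariant, where $\Gamma$ acts on $\SR(\CR_{\Gamma\car X})$ by pushing forward subequivalence relations along the action and on $\SA(\Gamma\ltimes A)$ by $\AD(u_g)$; (iii) $L(\CS)$ is an amenable (finite) von Neumann algebra if and only if $\CS$ is an amenable equivalence relation, which is the classical theorem of Zimmer and Connes--Feldman--Weiss; and (iv) $\Phi$ is Borel measurable with respect to the Polish topology on $\SR(\CR_{\Gamma\car X})$ of Definition \ref{metric} and the Effros--Mar\'echal topology on $\SA(\Gamma\ltimes A)$.

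With these properties in hand, the pushforward $\mu':=\Phi_*\mu$ is a $\Gamma$-invariant probability measure on $\SA(\Gamma\ltimes A)$ supported on amenable intermediate subalgebras containing $A$, hence an amenable $\Gamma$-IRIA in the sense of Definition \ref{def IRA}. Applying Theorem \ref{Thm C} to $\mu'$ then yields $L(\CS)\subset\rad(\Gamma)\ltimes A$ for $\mu$-a.e. $\Bar{\CS}$. Since $\rad(\Gamma)\ltimes A=L(\Bar{\CR}_{\rad(\Gamma)\car X})$ and $\Phi$ is inclusion-preserving, this translates back to $\Bar{\CS}\subset\Bar{\CR}_{\rad(\Gamma)\car X}$ for $\mu$-a.e. $\Bar{\CS}$, which is the desired conclusion. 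The ``in particular'' clause is then the deterministic case $\mu=\delta_{\Bar{\CS}}$.

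The main technical obstacle I anticipate is the Borelness of $\Phi$ in step (iv), since the topology on $\SR(\CR_{\Gamma\car X})$ of Definition \ref{metric} and the Effros--Mar\'echal topology on $\SA(\Gamma\ltimes A)$ are phrased in rather different language. The natural bridge is the family of orthogonal projections $e_{L(\CS)}:L^2(\Gamma\ltimes A)\to L^2(L(\CS))$: the Effros--Mar\'echal topology is captured by convergence of these projections against a countable dense set of vectors, while via the Fourier-type expansion $\xi=\sum_{g\in\Gamma}\xi_g u_g$ with $\xi_g\in L^2(A)$, the projection $e_{L(\CS)}\xi$ is determined by the restrictions of the $\xi_g$ to the measurable sets $\{x:(x,g^{-1}x)\in\CS\}$ underlying $\CS$. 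Matching these two descriptions should in fact upgrade $\Phi$ to a Borel isomorphism onto its image, giving more than is strictly needed for the argument.
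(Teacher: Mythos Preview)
Your proposal is correct and follows essentially the same route as the paper: push $\mu$ forward along the Feldman--Moore map $\bar{\CS}\mapsto L(\CS)$, check $\Gamma$-equivariance and that amenability is preserved, apply Theorem~\ref{Thm C}, and translate the inclusion $L(\CS)\subset\rad(\Gamma)\ltimes A=L(\CR_{\rad(\Gamma)\car X})$ back to $\bar{\CS}\subset\bar{\CR}_{\rad(\Gamma)\car X}$. The measurability obstacle you anticipate in~(iv) is resolved in the paper exactly along the lines you sketch---via the projections $e_{L(\CS)}=\mathds{1}_\CS$ acting on $L^2(\CR,\nu)$---and in fact yields that $L$ is a homeomorphism onto $\SA(L^\wx(X),L(\CR))$ (Theorem~\ref{L(S) continuous}), which is stronger than the Borel isomorphism you predicted.
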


To derive Theorem \ref{letterthm IRR} from Theorem \ref{Thm C}, we use the following natural result regarding the map from subequivalence relations of $\CR$ to intermediate subalgebras of the Cartan inclusion $L^\infty(X)\subset L(\CR)$.
\begin{letterthm}[Theorem \ref{L(S) continuous}]\label{prop D}
Let $\CR\subset X\times X$ be a countable nonsingular equivalence relation on standard probability space $(X,\nu_X)$. Denote by $\SA(L^\infty(X),L(\CR))$ the space of intermediate von Neumann subalgebras of $L^\infty(X)\subset L(\CR)$. Then the map $$L:\Bar{\CS}\in\SR(\CR)\mapsto L(\CS)\in \SA(L^\infty(X),L(\CR))$$ is a homeomorphism.
\end{letterthm}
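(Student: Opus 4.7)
The plan is to combine the Aoi-type correspondence between intermediate subalgebras of the Cartan inclusion $L^\wx(X) \subset L(\CR)$ and subequivalence relations of $\CR$ with a convergence criterion that uses a common family of \emph{edge projections} in $L^\wx(X)$ to parametrize both topologies.

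First I would verify that $L$ is a set-theoretic bijection. The inclusion $L^\wx(X) \subset L(\CR)$ is a Cartan inclusion (Feldman--Moore), and Aoi's theorem --- in its extension to the nonsingular Cartan setting --- states that intermediate von Neumann subalgebras correspond bijectively to Borel subequivalence relations of $\CR$ modulo null sets. Concretely, given an intermediate $N$, the associated subequivalence relation is the one generated by the groupoid normalizer of $L^\wx(X)$ inside $N$, and the resulting $L$-algebra coincides with $N$.

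To check that $L$ is a homeomorphism, fix a countable family $\{\varphi_i\}_{i \in I} \subset [[\CR]]$ of partial isomorphisms whose graphs generate $\CR$, with associated partial isometries $u_i \in L(\CR)$. For $\Bar{\CS} \in \SR(\CR)$, set
$$p_i(\CS) := \mathbf{1}_{\{x \in \mathrm{dom}(\varphi_i) \,:\, (x, \varphi_i(x)) \in \CS\}} \in L^\wx(X).$$
By Definition \ref{metric}, convergence $\Bar{\CS}_n \to \Bar{\CS}$ in $\SR(\CR)$ is equivalent to $p_i(\CS_n) \to p_i(\CS)$ strongly in $L^\wx(X)$ for every $i \in I$. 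On the algebra side, the unique conditional expectation $E_{L(\CS)} : L(\CR) \to L(\CS)$ arising from the Cartan structure satisfies $E_{L(\CS)}(u_i) = p_i(\CS) u_i$. Since $\{a u_i : i \in I,\, a \in L^\wx(X)\}$ is strongly total in $L(\CR)$, convergence $L(\CS_n) \to L(\CS)$ in the Effros--Mar\'echal topology on intermediate subalgebras is equivalent to pointwise strong convergence of the $E_{L(\CS_n)}$ on this total set, which by the displayed formula reduces to convergence of the $p_i(\CS_n)$.

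The main obstacle is the last step, namely justifying that EM-convergence of intermediate subalgebras of a given Cartan inclusion is detected by pointwise strong convergence of the conditional expectations on a strongly total set. In the pmp case this reduces to convergence of the Jones projections $e_{L(\CS_n)}$ in $L^2(L(\CR), \tau)$, which is a standard fact about inclusions of finite von Neumann algebras. The nonsingular setting requires fixing a faithful normal state pulled back from $\nu_X$ and verifying that the non-tracial Jones projections still govern EM-convergence; this is controlled via the spatial derivative of the inclusion and the Radon--Nikodym cocycle of $\CR$.
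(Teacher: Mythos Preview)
Your outline is sound and close in spirit to the paper, but the ``main obstacle'' you flag is left genuinely open, and the remedy you suggest (spatial derivatives and the Radon--Nikodym cocycle) is not what actually closes it. The paper handles the nonsingular case as follows: take $\phi=\langle\,\cdot\,\mathds{1}_X,\mathds{1}_X\rangle$ the canonical state on $L(\CR)$ coming from $\nu$. Using Aoi's theorem together with the irreducibility $L^\wx(X)'\cap L(\CR)=L^\wx(X)$ and Takesaki's conditional expectation theorem, every intermediate $N$ admits a unique $\phi$-preserving normal conditional expectation and is $\sigma^\phi$-invariant. One then invokes \cite[Remark 2.11]{HW98}: for the family of globally $\sigma^\phi$-invariant subalgebras, Effros--Mar\'echal convergence $N_n\to N$ is equivalent to $E_{N_n}(x)\xrightarrow{\mathrm{so}^*}E_N(x)$ for all $x$. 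This is precisely the statement you need and were missing; without it (or an equivalent), your argument is incomplete in the nonsingular case.

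On the level of bookkeeping, the paper uses a slightly different parametrization than your edge projections $p_i(\CS)$: it identifies the Jones projection $e_{L(\CS)}$ on $L^2(\CR,\nu)$ with the multiplication operator by $\mathds{1}_\CS$, and then runs the chain
\[
L(\CS_n)\to L(\CS)\ \Longleftrightarrow\ E_{L(\CS_n)}\xrightarrow{\mathrm{so}^*}E_{L(\CS)}\ \Longleftrightarrow\ e_{L(\CS_n)}\xrightarrow{\mathrm{so}}e_{L(\CS)}\ \Longleftrightarrow\ \mathds{1}_{\CS_n}\xrightarrow{\mathrm{so}}\mathds{1}_{\CS}\ \Longleftrightarrow\ \bar{\CS}_n\to\bar{\CS}.
\]
Your version via $E_{L(\CS)}(u_i)=p_i(\CS)u_i$ is equivalent and would also work once you have the $\sigma^\phi$-invariance and the Haagerup--Winsl{\o}w criterion in hand; the paper's global Jones-projection formulation simply avoids choosing a generating family and makes the last equivalence (with the Fr\'echet--Nikodym pseudometrics $d_E$) immediate, since $\|(\mathds{1}_{\CS_n}-\mathds{1}_{\CS})\mathds{1}_E\|_2^2=\nu_E(\CS_n\triangle\CS)$ and $\{\mathds{1}_E\}_{E\in\mathcal{E}_{\mathrm{fin}}}$ is total in $L^2(\CR,\nu)$.
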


\subsection*{Organization of the paper.}This paper is divided into eight sections. In addition to the results discussed above, Section \ref{section prox usb} extends the notions of $\mu$-proximality and $\mu$-unique stationary boundaries ($\mu$-USB) to the noncommutative setting and establishes their connection to the recently developed noncommutative Poisson boundary theory \cite{DP20}. In Section \ref{section tight inclusion}, we extend the notion of tight inclusions, along with several related results from \cite{HK24}, to the noncommutative setting. Finally, in Section~\ref{appendix}, generalizations of Theorem~\ref{Thm B} and Theorem~\ref{Thm C} are provided using the Furstenberg boundary. In particular, we show that the assumption that $N$ contains $A$ in the above-mentioned theorems can be removed. We refer the readers to Theorem~\ref{prop: main prop in appendix} and Theorem~\ref{thm:singularmaximal} for precise statements.
\subsection*{New Developments.}In an earlier version of this paper, it was shown that the map $L: \Bar{\CS} \in \SR(\CR) \mapsto L(\CS) \in \SA(L(\CR))$ is continuous for the case that $\CR$ is pmp. Subsequently, Fran{\c{c}}ois Le Ma{\^\i}tre introduced the author to an unpublished but more straightforward proof, developed jointly with his coauthors of \cite{FLMP24} -- Pierre Fima, Issan Patri, and Kunal Mukherjee. Le Ma{\^\i}tre also raised a question: Does this continuity hold when $\CR$ is merely nonsingular? We are now able to answer this question with Theorem \ref{L(S) continuous}, where part of the proof (specifically, formula (\ref{continuity of L-1})) is generalized from the proof by Le Ma{\^\i}tre and his coauthors for the continuity of the pmp case.

This version also contains an Appendix (see Section~\ref{appendix}) written by Tattwamasi Amrutam and Yongle Jiang, which includes an elegant alternative proof of Theorems~\ref{Thm B} and~\ref{Thm C}, with an improvement of the assumptions.
\subsection*{Acknowledgment.}
The author would like to thank his supervisor, Professor Cyril Houdayer, for numerous insightful discussions and valuable comments on this paper. He is also grateful to Professor Hanfeng Li and Professor Yi-Jun Yao for many valuable comments regarding this paper, and to Professor Fran{\c{c}}ois Le Ma{\^\i}tre for clarifying the historical background of the topology on subequivalence relations and for many valuable comments regarding Theorem~\ref{L(S) continuous}. The author would also like to thank Professor Tattwamasi Amrutam and Professor Yongle Jiang for their kind permission to include their alternative proof as an appendix (Section~\ref{appendix}) in this paper.

\section{Preliminaries}\label{Preliminaries}

\subsection{Topological boundaries and amenable radicals of groups}\label{group boundary} Let $\Gamma$ be a countable discrete group. A compact Hausdorff $\Gamma$-space $X$ is a \textbf{$\Gamma$-boundary} if it satisfies one of the following equivalent conditions \cite{Aze70}:
\begin{itemize}
    \item [(1)] For any $\nu\in\prob(X)$, the weak* closure of the $\Gamma$-orbit $\Gamma\nu$ contains all Dirac measures on $X$, i.e., $\delta_X\subset\overline{\Gamma\nu}^{\mathrm{w}^*}$;
    \item [(2)] For any $\nu\in\prob(X)$, then the weak* closed convex hull $\overline{\mathrm{Convex}(\Gamma\nu)}^{\mathrm{w}^*}=\prob(X)$;
    \item [(3)] For any $\nu\in\prob(X)$, the Poisson transform 
    $$P_\nu:f\in C(X)\mapsto\left(g\mapsto \int_X f(gx)\d \nu(x)\right)\in \ell^\infty(\Gamma)$$
    is isometric.
\end{itemize}
In particular, any $\Gamma$-boundary is $\Gamma$-minimal, i.e., for any $x\in X$, $\overline{\Gamma x}=X$.

Following \cite{Fur73}, the \textbf{$\Gamma$-Furstenberg boundary} $\partial_F \Gamma$ is the unique maximal $\Gamma$-boundary in the sense that any $\Gamma$-boundary is a continuous $\Gamma$-equivariant image of $\partial_F \Gamma$. 

The \textbf{amenable radical} $\rad(\Gamma)$ of $\Gamma$ is the unique maximal amenable normal subgroup of $\Gamma$ in the sense that any amenable normal subgroup $\Lambda$ of $\Gamma$ must be a subgroup of $\rad(\Gamma)$. Denote by $\mathrm{Bnd_m}(\Gamma)$ the set of metrizable $\Gamma$-boundaries. Following \cite[Proposition 7]{Fur03}, we have
$$\rad(\Gamma)=\mathrm{Ker}(\Gamma\car\partial_F \Gamma)=\cap_{X\in\mathrm{Bnd_m}(\Gamma)}\mathrm{Ker}(\Gamma\car X).$$

\subsection{Injective envelopes of \texorpdfstring{C$^*$}{}-algebras.}\label{def ess}
A map $\CP:\CB\to\mathcal{C}$ between C$^*$-algebras is called a \textbf{unital completely positive (ucp)} map if it satisfies that $\CP(1)=1$ and for any $n\geq1$, $\CP\otimes I_n:M_n(\CA)\to M_n(\CB)$ is positive. Such a ucp map $\CP$ is further called \textbf{completely isometric} if $\CP\otimes I_n$ is isometric for any $n\geq 1$.

Fix a C$^*$-algebra $\CA$. Following \cite{Ham79}, for a C$^*$-inclusion $\CA\subset\CB$, we say that the inclusion $\CA\subset\CB$ is \textbf{essential} or $\CB$ is an \textbf{essential extension} of $\CA$ if for any other C$^*$-inclusion $\CA\subset \mathcal{C}$, any $\CP\in\UCP_{\CA}(\CB,\mathcal{C})$ is completely isometric. Here $\UCP_{\CA}(\CB,\mathcal{C})$ is the set of \textbf{$\CA$-bimodular} ucp maps from $\CB$ to $\mathcal{C}$, i.e., ucp maps $\CP:\CB\to\mathcal{C}$ with $\CP(xby)=x\CP(b)y$ for any $x,y\in\CA$ and $b\in \CB$.

The \textbf{injective envelope} of $\CA$, denoted by $I(\CA)$, is an essential extension of $\CA$ that is also injective as a C$^*$-algebra, which always exists and is unique up to $\CA$-bimodular $\ast$-isomorphisms. The injective envelope $I(\CA)$ is also the \textbf{universal essential extension} of $\CA$ in the sense that any essential extension of $\CA$ is a subalgebra of $I(\CA)$, up to $\CA$-bimodular $\ast$-isomorphisms.

\subsection{Hyperstates and ucp maps.}\label{hyperstate}
Fix a tracial von Neumann algebra $(M, \tau )$. Following \cite{DP20}, for a  C$^*$-inclusion $M\subset\CA$ and a state $\psi$ on $\CA$, we say that $\psi$ is a $\tau$-\textbf{hyperstate} if $\psi|_M =\tau$. And we say that a $\tau$-hyperstate $\psi$ is a \textbf{hypertrace} if for any $x\in M$, $x\psi=\psi x$ (i.e. $\psi(x\,\cdot\,)=\psi(\,\cdot\,x)$). 

We denote by $\mathcal{S}_\tau(\CA)$ the set of $\tau$-hyperstates on $\CA$. For $\psi\in\mathcal{S}_\tau(\CA)$, we naturally have $L^2(M,\tau)\subset L^2(\CA,\psi)$. For convention, we simply denote $L^2(M,\tau)$ by $L^2(M)$, and denote by $\hat{1}\in L^2(M)$ the cyclic vector and $\hat{z}=z\hat{1}$ for $z\in M$. Let $e_M\in B(L^2(\CA,\psi))$ be the orthogonal projection onto $L^2(M)$. The ucp map $\CP_\psi:\CA\to B(L^2(M))$ is defined as 
$$\CP_\psi(T)=e_MTe_M, \ T\in \CA.$$
Following \cite[Proposition 2.1]{DP20}, $\psi\mapsto\CP_\psi$ is a bijection between hyperstates on $\CA$ and ucp $M$-bimodular maps from $\CA$ to $B(L^2(M))$, whose inverse is $\CP \mapsto \langle\CP(\,\cdot\,)\hat{1},\hat{1}\rangle$. When $\CA$ is a von Neumann algebra, $\psi$ is normal if and only if $\CP_\psi$ is normal.

For $\psi\in\mathcal{S}_\tau(\CA)$ and $\varphi \in \mathcal{S}_\tau(B(L^2(M)))$, the convolution $\varphi \ast\psi\in \mathcal{S}_\tau(\CA)$ is defined to be the hyperstate associated with the $M$-bimodular ucp map $\mathcal{P}_{\varphi}\circ \mathcal{P}_{\psi}$. And $\psi$ is said to be $\varphi$-stationary if $\varphi \ast\psi=\psi$.

Let $\varphi\in\mathcal{S}_\tau(B(L^2(M)))$ be a hyperstate. The set of $\mathcal{P}_\varphi$\textbf{-harmonic operators} is defined to be
$$\mathrm{Har}(\mathcal{P}_\varphi)=\mathrm{Har}(B(L^2(M)),\mathcal{P}_\varphi)=\{ T\in B(L^2(M))\mid \mathcal{P}_\varphi(T)=T\}. $$
As an operator system, $\mathrm{Har}(\mathcal{P}_\varphi)$ is always injective, i.e., there exists a conditional expectation $E:\BL\to\mathrm{Har}(\mathcal{P}_\varphi)$.

The \textbf{noncommutative Poisson boundary} $\mathcal{B}_\varphi$ of $M$ with respect to $\varphi$ is defined to be the noncommutative Poisson boundary of the ucp map $\mathcal{P}_\varphi$ as defined by Izumi \cite{Izu02}, that is, the Poisson boundary $\mathcal{B}_\varphi$ is the unique $\mathrm{C}^*$-algebra (a von Neumann algebra when $\varphi$ is normal) that is isomorphic, as an operator system, to the space of harmonic operators $\mathrm{Har}(\mathcal{P}_\varphi)$. And the isomorphism $\CP:\CB_\varphi\to \mathrm{Har}(\mathcal{P}_\varphi)$ is called the $\varphi$-\textbf{Poisson transform}. Since $M\subset\mathrm{Har}(\mathcal{P}_\varphi)$, $M$ can also be embedded into $\CB_\varphi$ as a subalgebra. We said that $\CB_\varphi$ is \textbf{trivial} if $\CB_\varphi=M$. For the inclusion 
$M\subset \CB_\varphi$, $\zeta:=\varphi\circ\CP\in\mathcal{S}_\tau(\CB_\varphi)$ is the \textbf{canonical $\varphi$-stationary hyperstate} on $\CB_\varphi$.

Following \cite[Proposition 2.8]{DP20}, for a normal hyperstate $\varphi\in\mathcal{S}_\tau(B(L^2(M)))$, there exists a sequence $\{z_n\}\subset M$ such that $\sum_{n=1}^{\infty}z^*_nz_n=1$, and $\varphi$ and $\CP_\varphi$ admit the following standard form:
$$\varphi(T)=\sum_{n=1}^{\infty}\langle T  \hat{z}_n^*,\hat{z}_n^*\rangle, \ \mathcal{P}_\varphi(T)=\sum_{n=1}^{\infty} (Jz_n^*J)T(Jz_nJ), \ T\in B(L^2(M)).$$
Following \cite[Proposition 2.5 and the unnumbered remark right after Proposition 2.8]{DP20}, $\varphi$ is said to be 
\begin{itemize}
\item \textbf{regular}, if $\sum_{n=1}^{\infty}\limits z_n^*z_n=\sum_{n=1}^{\infty}\limits z_nz_n^*=1$;
\item \textbf{strongly generating}, if the unital algebra (rather than the unital $\ast$-algebra) generated by $\{z_n\}$ is weakly dense in $M$.
\end{itemize}
Following \cite[Proposition 2.9]{DP20}, when $\varphi$ is a normal regular strongly generating hyperstate, the canonical hyperstate $\zeta=\varphi\circ\CP$ is a normal faithful hyperstate on $\CB_\varphi$.

The following are some lemmas regarding ucp maps, which will be frequently used in this paper.

\begin{lemma}\label{stationary hyperstate}
For any C$^*$-inclusion $M\subset \CA$, $\CA$ admits a $\varphi$-stationary hyperstate, or equivalently, an $M$-bimodular ucp map from $\CA$ to $\har(\CP_\varphi)$ or $\CB_\varphi$.
\end{lemma}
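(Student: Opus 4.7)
The plan is to obtain a $\varphi$-stationary hyperstate via a Markov--Kakutani fixed point argument on the weak-$*$ compact convex set $\mathcal{S}_\tau(\CA)$ of $\tau$-hyperstates on $\CA$, and then read off the equivalent formulations from the bijection $\psi\leftrightarrow\CP_\psi$ already recalled in Section~\ref{hyperstate}.

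First I would check that $\mathcal{S}_\tau(\CA)$ is a nonempty, convex, weak-$*$ compact subset of $\CA^*$. Convexity is immediate and weak-$*$ compactness follows by intersecting the weak-$*$ compact state space of $\CA$ with the weak-$*$ closed affine subspace cut out by the conditions $\{\psi(x)=\tau(x):x\in M\}$. Nonemptiness follows from Hahn--Banach: the state $\tau:M\to\C$ extends to a norm-one positive linear functional on $\CA$, which is necessarily a state and restricts to $\tau$ on $M$, hence lies in $\mathcal{S}_\tau(\CA)$.

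Next I would verify that the convolution $\Phi:\psi\mapsto\varphi*\psi$ is an affine weak-$*$ continuous self-map of $\mathcal{S}_\tau(\CA)$. Affinity is clear from the definition $\varphi*\psi\leftrightarrow\CP_\varphi\circ\CP_\psi$. For continuity, I would expand using the standard form of $\varphi$: writing $\varphi(T)=\sum_n\langle T\hat z_n^*,\hat z_n^*\rangle$ with $\sum_n z_n^*z_n=1$, and using $M$-bimodularity of $\CP_\psi$ together with $(Jz_nJ)\hat 1=\hat{z_n^*}$, one gets
\[
(\varphi*\psi)(T)=\sum_{n=1}^{\infty}\psi(z_nTz_n^*),\qquad T\in\CA.
\]
For every fixed $N$ the partial sum $\sum_{n=1}^{N}\psi(z_nTz_n^*)$ is weak-$*$ continuous in $\psi$. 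For $T\ge 0$ the tail is bounded uniformly in $\psi\in\mathcal{S}_\tau(\CA)$ by
\[
\|T\|\sum_{n\ge N}\psi(z_nz_n^*)=\|T\|\sum_{n\ge N}\tau(z_nz_n^*),
\]
which tends to $0$ because $\sum_n\tau(z_nz_n^*)=\varphi(1)=1$; polarization extends this uniform-tail estimate to all $T\in\CA$, so $\Phi$ is weak-$*$ continuous. Markov--Kakutani then produces a fixed point $\psi_0\in\mathcal{S}_\tau(\CA)$ with $\varphi*\psi_0=\psi_0$, i.e., a $\varphi$-stationary hyperstate.

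Finally, the claimed equivalences are essentially formal. The bijection $\psi\leftrightarrow\CP_\psi$ between hyperstates on $\CA$ and $M$-bimodular ucp maps $\CA\to B(L^2(M))$ translates $\varphi*\psi_0=\psi_0$ into $\CP_\varphi\circ\CP_{\psi_0}=\CP_{\psi_0}$, which says precisely that $\CP_{\psi_0}(\CA)\subset\har(\CP_\varphi)$; composing with the inverse of the $\varphi$-Poisson transform $\CP:\CB_\varphi\to\har(\CP_\varphi)$ then yields the desired $M$-bimodular ucp map into $\CB_\varphi$. The only genuinely nontrivial point in the argument is the weak-$*$ continuity of $\Phi$, and this reduces to the uniform tail estimate above.
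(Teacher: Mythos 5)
Your argument is correct, but it takes a genuinely different route from the paper. The paper's proof is two lines: extend $\tau$ to a hyperstate $\eta$ on $\CA$ by Hahn--Banach, and compose $\CP_\eta$ with the conditional expectation $E:\BL\to\har(\CP_\varphi)$, which exists because $\har(\CP_\varphi)$ is an injective operator system (as recalled in Section 2.3); the range of $E\circ\CP_\eta$ lies in $\har(\CP_\varphi)$, so the associated hyperstate is automatically $\varphi$-stationary. You instead run the classical averaging argument directly on the state space: Markov--Kakutani applied to the affine self-map $\psi\mapsto\varphi\ast\psi$ of the weak-$*$ compact convex set $\mathcal{S}_\tau(\CA)$, with the real work being the uniform tail estimate $\sum_{n\geq N}|\psi(z_nTz_n^*)|\leq\Vert T\Vert\sum_{n\geq N}\tau(z_nz_n^*)\to 0$ that gives weak-$*$ continuity of the convolution. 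Your computation $(\varphi\ast\psi)(T)=\sum_n\psi(z_nTz_n^*)$ and the tail bound are correct (Cauchy--Schwarz even handles general $T$ without polarization), and the translation back to ucp maps via the bijection $\psi\leftrightarrow\CP_\psi$ is fine. What the paper's approach buys is brevity and generality: it works for an arbitrary hyperstate $\varphi$, whereas your proof uses the standard form $\varphi(T)=\sum_n\langle T\hat z_n^*,\hat z_n^*\rangle$, which is only available for \emph{normal} $\varphi$ (this is harmless for the applications in the paper, where $\varphi$ is always normal regular strongly generating, but it is a genuine restriction relative to the stated lemma). What your approach buys is self-containedness: it does not invoke the existence of a conditional expectation onto $\har(\CP_\varphi)$, which itself is usually proved by exactly the kind of averaging-and-compactness argument you make explicit, just carried out one level up in $\UCP(\BL,\BL)$ rather than in the state space.
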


\begin{proof}
By the Hahn-Banach theorem, $\CA$ admits a hyperstate $\eta$. Let $E$ be the conditional expectation from $\BL$ to $\har(\CP_\varphi)$. Then $E\circ\CP_\eta$ is an $M$-bimodular ucp map from $\CA$ to $\har(\CP_\varphi)$ and the associated hyperstate is $\varphi$-stationary.
\end{proof}

\begin{lemma}\label{ucp homomorphism}
Let $\CA,\CB,\mathcal{C}$ be C$^*$-algebras. Assume that two ucp maps $\CP_1:\CA\to\CB$ and $\CP_2:\CB \to \mathcal{C}$ satisfy that $\CP_2$ is faithful and $\CP_2\circ\CP_1$ is a $*$-homomorphism, then both $\CP_1$ and $\CP_2|_{\CP_1(\CA)}$ are $*$-homomorphisms. 
\end{lemma}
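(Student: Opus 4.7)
The plan is to use the multiplicative domain of a ucp map together with the Choi--Schwarz inequality, which says that for a ucp map $\CP$ one has $\CP(x^*x) \geq \CP(x)^*\CP(x)$, with equality on both $x^*x$ and $xx^*$ characterizing membership in the multiplicative domain, on which $\CP$ restricts to a $*$-homomorphism.

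First I would apply Choi--Schwarz twice. For any $a \in \CA$, positivity of $\CP_2$ applied to the inequality $\CP_1(a^*a) \geq \CP_1(a)^*\CP_1(a)$, followed by Choi--Schwarz for $\CP_2$, yields the chain
$$(\CP_2 \circ \CP_1)(a^*a) \;\geq\; \CP_2\bigl(\CP_1(a)^*\CP_1(a)\bigr) \;\geq\; \CP_2(\CP_1(a))^* \CP_2(\CP_1(a)).$$
Since $\CP_2 \circ \CP_1$ is assumed to be a $*$-homomorphism, the two outer terms coincide, which forces both inequalities to be equalities.

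Next I would exploit each equality separately. From the first equality, the positive defect $\CP_1(a^*a) - \CP_1(a)^*\CP_1(a) \geq 0$ lies in the kernel of $\CP_2$; faithfulness of $\CP_2$ then gives $\CP_1(a^*a) = \CP_1(a)^*\CP_1(a)$, and the identical argument applied to $aa^*$ shows that every element of $\CA$ lies in the multiplicative domain of $\CP_1$. Hence $\CP_1$ is a $*$-homomorphism. The second equality says exactly that $\CP_1(a)$ lies in the multiplicative domain of $\CP_2$ for every $a \in \CA$, so $\CP_1(\CA)$ is contained in that multiplicative domain and $\CP_2|_{\CP_1(\CA)}$ is a $*$-homomorphism as well.

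There is no genuine obstacle here; the only point requiring a bit of care is the use of faithfulness of $\CP_2$, which must be interpreted as order-faithfulness on positive elements (so that $\CP_2(b) = 0$ with $b \geq 0$ implies $b=0$). This is what makes the leap from $\CP_2$ killing the Choi defect to the defect itself being zero, and is the only input beyond the standard Choi--Schwarz/multiplicative domain machinery.
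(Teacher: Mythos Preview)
Your proof is correct and is essentially identical to the paper's own proof: both set up the chain $\CP_2\circ\CP_1(a^*a)\geq \CP_2(\CP_1(a)^*\CP_1(a))\geq \CP_2(\CP_1(a))^*\CP_2(\CP_1(a))$ via Kadison/Choi--Schwarz, use that $\CP_2\circ\CP_1$ is multiplicative to collapse the chain, and then invoke faithfulness of $\CP_2$ on the first inequality. Your version is slightly more explicit in invoking the multiplicative-domain formalism and in noting that the $aa^*$ side follows by applying the same argument to $a^*$, but the substance is the same.
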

\begin{proof}
For any $T\in \CA$, by Kadison's inequality, we have
\begin{equation}
 \CP_2\circ \CP_1(T^*T)\geq \CP_2(\CP_1(T^*)\CP_1(T))\geq \CP_2(\CP_1(T^*))\CP_2(\CP_1(T))=\CP_2\circ \CP_1(T^*T).
\end{equation}
Since $\CP_2$ is faithful, we must have $\CP_1(T^*T)=\CP_1(T^*)\CP_1(T)$ and $\CP_2(\CP_1(T^*)\CP_1(T))=\CP_2(\CP_1(T^*))\CP_2(\CP_1(T))$. Therefore, both $\CP_1$ and $\CP_2|_{\CP_1(\CA)}$ are $*$-homomorphisms.
\end{proof}

\begin{lemma}\label{ucp on A_i to homomorphism}
Let $\CA,\CB$ be two C$^*$-algebras. Assume that $\CA$ can be generated by two C$^*$-subalgebras $\CA_1,\CA_2\subset \CA$. Then for any ucp map $\CP:\CA\to \CB$, we have 
\begin{itemize}
    \item [(1)] $\CP$ is a $*$-homomorphism if and only if both $\CP|_{\CA_1}$ and $\CP|_{\CA_2}$ are $*$-homomorphisms. 
    \item [(2)] Given a $*$-homomorphism $\Phi:\CA\to \CB$. Then $\CP=\Phi$ if and only if $\CP|_{\CA_i}=\Phi|_{\CA_i}$ for $i=1,2$.
\end{itemize}

\end{lemma}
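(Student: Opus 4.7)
The plan is to reduce both statements to the classical theory of the \emph{multiplicative domain} of a ucp map, namely the set
$$D(\CP):=\{a\in\CA : \CP(a^*a)=\CP(a)^*\CP(a)\text{ and }\CP(aa^*)=\CP(a)\CP(a)^*\}.$$
A standard consequence of the Kadison--Schwarz inequality (essentially the argument already carried out in Lemma \ref{ucp homomorphism}) is that $D(\CP)$ is a norm-closed $*$-subalgebra of $\CA$, that $\CP|_{D(\CP)}$ is a $*$-homomorphism, and that the bimodule identities $\CP(ab)=\CP(a)\CP(b)$ and $\CP(ba)=\CP(b)\CP(a)$ hold for every $a\in D(\CP)$ and every $b\in\CA$. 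I would either quickly record this at the start of the proof or cite it as a well-known fact.

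For (1), the ``only if'' direction is trivial. For the ``if'' direction, the hypothesis that $\CP|_{\CA_1}$ and $\CP|_{\CA_2}$ are $*$-homomorphisms is exactly the statement that $\CA_1\cup\CA_2\subset D(\CP)$. Since $D(\CP)$ is a C$^*$-subalgebra of $\CA$, it must contain the C$^*$-subalgebra generated by $\CA_1$ and $\CA_2$, which by assumption is $\CA$. Hence $D(\CP)=\CA$ and $\CP$ is a $*$-homomorphism.

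For (2), again the forward direction is immediate. For the reverse, I would first invoke (1) to upgrade $\CP$ to a $*$-homomorphism. Once both $\CP$ and $\Phi$ are $*$-homomorphisms, the agreement locus $E:=\{a\in\CA : \CP(a)=\Phi(a)\}$ is automatically a norm-closed $*$-subalgebra of $\CA$ (closure under the algebraic operations uses that $\CP$ and $\Phi$ are now both multiplicative and $*$-preserving; closure under norm limits is continuity). Since $E\supset\CA_1\cup\CA_2$ by hypothesis, $E$ contains the C$^*$-subalgebra they generate, which is $\CA$, so $\CP=\Phi$.

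I do not expect any serious obstacle: the proof is a routine application of the multiplicative-domain formalism. The only point worth flagging is the bimodule extension of multiplicativity (that $\CP(ab)=\CP(a)\CP(b)$ for $a\in D(\CP)$ and arbitrary $b\in\CA$, not merely for $b\in D(\CP)$); this is what allows one to propagate the $*$-homomorphism property from the generating subalgebras to arbitrary alternating products of elements of $\CA_1$ and $\CA_2$, and hence to the whole of $\CA$.
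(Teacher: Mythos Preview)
Your proposal is correct and is essentially the paper's own argument: both parts are reduced to the multiplicative domain of $\CP$ (the paper cites \cite[Theorem 3.1]{Ch74} for the same facts you spell out), and part (2) is obtained from (1) together with the observation that two $*$-homomorphisms agreeing on a generating set must coincide.
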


\begin{proof}
(1)``$\Rightarrow$'' is clear.

``$\Leftarrow$'': Assume that both $\CP|_{\CA_1}$ and $\CP|_{\CA_2}$ are $*$-homomorphisms. Following \cite[Theorem 3.1]{Ch74}, both $\CA_1$ and $\CA_2$ are contained in the multiplicative domain of $\CP$. Since $\CA_1$ and $\CA_2$ generate $\CA$ as a C$^*$-algebra, $\CA$ is also contained in the multiplicative domain of $\CP$. Hence $\CP:\CA\to \CB$ is a $*$-homomorphism.

(2)``$\Rightarrow$'' is clear.

``$\Leftarrow$'': Assume that $\CP|_{\CA_i}=\Phi|_{\CA_i}$ for $i=1,2$. Then $\CP|_{\CA_i}$ $(i=1,2)$ is a $*$-homomorphism. By (1) we know that $\CP:\CA\to \CB$ is a $*$-homomorphism. Moreover, since $\CP|_{\CA_1\cup \CA_2}=\Phi|_{\CA_1\cup \CA_2}$ on a generating set $\CA_1\cup \CA_2$ of $\CA$, we must have $\CP=\Phi$.
\end{proof}

\subsection{The Effros-Mar\'{e}chal topology on \texorpdfstring{$\mathrm{vN}(H)$}{}.} Let $H$ be a separable Hilbert space. Following \cite{Eff65,Mar73,HW98,AHW16}, let $\mathrm{vN}(H)$ be the collection of von Neumann subalgebras of $B(H)$, define the \textbf{Effros-Mar\'{e}chal topology} on $\mathrm{vN}(H)$ as follows: For a sequence $(M_n)\subset\mathrm{vN}(H)$, let
$$\liminf_n M_n =\{x\in B(H)\mid \exists (x_n)\in \ell^\infty(\N,M_n) \mbox{ with } x=\mathrm{so}^*-\lim_n x_n\};$$
$$\limsup_n M_n =\lag\{x\in B(H)\mid  x \mbox{ is a wo-limit point of some } (x_n)\in \ell^\infty(\N,M_n)\}\rag.$$
Here $\lag\,\cdot \,\rag$ refers to the von Neumann algebra generated by the given set and the strong* operator topology is defined by $x_n\xrightarrow{\mathrm{so^*}}x$ iff $x_n\xrightarrow{\mathrm{so}}x$ and $x_n^*\xrightarrow{\mathrm{so}}x^*$. Then we say that $M_n\to M\in \mathrm{vN}(H)$ if
$$\liminf_n M_n=\limsup_n M_n=M.$$
Following \cite[Theorem 3.5]{HW98}, we have that 
$$\liminf_n M_n'=(\limsup_n M_n)',$$
$$\limsup_n M_n'=(\liminf_n M_n)'.$$
In particular, $M\in\mathrm{vN}(H)\mapsto M'\in\mathrm{vN}(H)$ is a homeomorphism.

For a countable discrete group $\Gamma$, let $\mathrm{Sub}(\Gamma)$ be the collection of subgroups of $\Gamma$. Define the \textbf{Chabauty topology} \cite{Cha50} on $\mathrm{Sub}(\Gamma)$ as follows: For a sequence $(\Lambda_n)\subset\mathrm{Sub}(\Gamma)$, let
$$\liminf_n \Lambda_n=\bigcup_{n=1}^{\infty}\bigcap_{k=n}^{\infty}\Lambda_k;$$
$$\limsup_n \Lambda_n=\bigcap_{n=1}^{\infty}\bigcup_{k=n}^{\infty}\Lambda_k.$$
Then we say that $\Lambda_n\to\Lambda$ if 
$$\liminf_n \Lambda_n=\limsup_n \Lambda_n=\Lambda.$$
\cite[Proposition 4.1]{AHO23} shows that the map $L: \Lambda\in\mathrm{Sub}(\Gamma)\mapsto L(\Lambda)\in \SA(L(\Gamma))$ is continuous.

It was shown in \cite[Theorem 1.4]{BDLW16} that for any $\Gamma$-invariant measure $\mu\in\prob(\mathrm{Sub}(\Gamma))$ with $\mu$-almost every $\Lambda\in\mathrm{Sub}(\Gamma)$ amenable, we must have $\Lambda\subset\rad(\Gamma)$ $\mu$-almost everywhere. Recently, \cite{AHO23} extended this result to group von Neumann algebras. In Sections \ref{IRA} and \ref{IRR}, we will extend this result to tracial crossed products and equivalence relations arising from free pmp actions.

\subsection{Jones' basic construction.}

Following \cite{Jo83} (see also \cite{AP17}), let $( M,\tau)$ be a tracial von Neumann algebra and $B\subset M$ be a von Neumann subalgebra. Let $e_B\in B(L^2( M))$ be the orthogonal projection onto $L^2(B)$. The von Neumann algebra $\langle M, e_B \rangle\subset B(L^2(M))$ generated by $M$ and $e_B$ is called the \textbf{Jones' basic construction} of $B\subset M$, which satisfies that $\langle M, e_B \rangle=JB'J$, where $J:L^2(M)\to L^2(M)$ is the modular conjugation operator of $(M,\tau)$. Since $B$ is amenable if and only if $B'$ is amenable, we know that $B$ is amenable if and only if $\langle M, e_B \rangle=JB'J$ is amenable.

\section{Noncommutative topological boundaries}\label{Section NC topological boundary}

In Section \ref{Section NC topological boundary}, \ref{section prox usb}, and \ref{section tight inclusion}, we fix a separable tracial von Neumann algebra $(M,\tau)$.

As an analogue of the equivalent conditions (1-3) for the definition of $\Gamma$-boundary in Subsection \ref{group boundary}, we have the following equivalent conditions (i-iii) in the noncommutative setting.
\begin{theorem}\label{eq def of boundary}
For a C$^*$-inclusion $M\subset \CA$, the following conditions are equivalent.
\begin{itemize}
    \item [(i)] There exists a completely isometric $\CP_0\in \UCP_M(\CA,\BL)$ and for any $\CP\in\UCP_M(\CA,\BL)$, one has $$\UCP_M(\BL,\BL)\circ\CP =\UCP_M(\CA,\BL)$$ (or equivalently, for any $\eta\in \CS_\tau(\CA)$, $\CS_\tau(\BL)\ast \eta=\CS_\tau(\CA)$);
    \item[(ii)] Any $\CP\in\UCP_M(\CA,\BL)$ is completely isometric;
    \item[(iii)] The inclusion $M\subset \CA$ is essential \cite[Definition 4.5]{Ham79}, i.e., for any other C$^*$-inclusion $M\subset \CB$, any $\CP\in\UCP_M(\CA,\CB)$ is completely isometric.
\end{itemize}
\begin{proof}
(i) $\Rightarrow$ (ii): Fix a $\CP\in\UCP_M(\CA,\BL)$. Since the complete isometry $\CP_0\in\UCP_M(\CA,\BL)=\UCP_M(\BL,\BL)\circ\CP$, there exists a $\CP_1\in \UCP_M(\BL,\BL)$ such that $\CP_1\circ\CP=\CP_0$. Since $\CP_0$ is completely isometric, for any $T\in M_n(\CA)$, we have
\begin{equation}\label{completely isometric}
 \Vert T\Vert=\Vert\CP_0(T)\Vert=\Vert\CP_1\circ\CP(T)\Vert\leq \Vert\CP(T)\Vert\leq\Vert T\Vert.   
\end{equation}
Hence $\Vert\CP(T)\Vert=\Vert T\Vert$ for any $T\in M_n(\CA)$ and $\CP$ is completely isometric.\\

(ii) $\Rightarrow$ (iii): Fix a C$^*$-inclusion $M\subset \CB$ and $\CP\in\UCP_M(\CA,\CB)$. Take $\CP_2\in \UCP_M(\CB,\BL)$, which is not empty by Lemma \ref{stationary hyperstate}. Then we have that $\CP_2\circ \CP \in \UCP_M(\CA,\BL)$ is completely isometric. By the same discussion in (\ref{completely isometric}), $\CP_2\circ \CP$ being completely isometric induces that $\CP$ is also completely isometric.\\

(iii) $\Rightarrow$ (i): By Lemma \ref{stationary hyperstate}, $\UCP_M(\CA,\BL)$ is not empty. For any $\CP\in\UCP_M(\CA,\BL)$, since $\CA$ satisfies (iii), $\CP$ is completely isometric. For any $\CP_3\in\UCP_M(\CA,\BL)$, since $\BL$ is injective as an operator system and $\CP: \CA\to\BL$ is an embedding between operator systems, there exists a ucp map $\CP_4:\BL\to\BL$ with $\CP_4\circ\CP=\CP_3$. 
$$
\begin{tikzcd}
B(L^2(M)) \arrow[rd, "\mathcal{P}_4", dashed]                         &           \\
\mathcal{A} \arrow[u, "\mathcal{P}", hook] \arrow[r, "\mathcal{P}_3"] & B(L^2(M))
\end{tikzcd}$$
Also, since both $\CP$ and $\CP_3$ fix $M$, we know that $\CP_4$ is $M$-bimodular and $$\CP_3=\CP_4\circ\CP\in\UCP_M(\BL,\BL)\circ\CP.$$ Hence we have
$$\UCP_M(\BL,\BL)\circ\CP =\UCP_M(\CA,\BL).$$
\end{proof}
\end{theorem}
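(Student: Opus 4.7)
The plan is to prove the chain of implications (i) $\Rightarrow$ (ii) $\Rightarrow$ (iii) $\Rightarrow$ (i), with the last direction being the one that genuinely exploits the structure of $\BL$.

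For (i) $\Rightarrow$ (ii), I would fix an arbitrary $\CP\in\UCP_M(\CA,\BL)$ and use the hypothesis to write $\CP_0=\CP_1\circ\CP$ for some $\CP_1\in\UCP_M(\BL,\BL)$. Since $\CP_1$ is ucp hence completely contractive, for any $T\in M_n(\CA)$ one has $\|T\|=\|\CP_0(T)\|\le\|\CP(T)\|\le\|T\|$, so $\CP$ is a complete isometry.

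For (ii) $\Rightarrow$ (iii), given any other inclusion $M\subset\CB$ and any $\CP\in\UCP_M(\CA,\CB)$, I would invoke Lemma~\ref{stationary hyperstate} applied to $\CB$ to produce $\CP_2\in\UCP_M(\CB,\BL)$. Then $\CP_2\circ\CP\in\UCP_M(\CA,\BL)$ is completely isometric by (ii), and the same two-line estimate as above forces $\CP$ to be completely isometric.

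For (iii) $\Rightarrow$ (i), Lemma~\ref{stationary hyperstate} gives non-emptiness of $\UCP_M(\CA,\BL)$, and (iii) applied with $\CB=\BL$ gives that every $\CP\in\UCP_M(\CA,\BL)$ is completely isometric, so one such $\CP$ serves as the $\CP_0$ required by (i). The remaining task is to show that for any $\CP_3\in\UCP_M(\CA,\BL)$ there is $\CP_4\in\UCP_M(\BL,\BL)$ with $\CP_4\circ\CP=\CP_3$. Here I would use injectivity of $\BL$ as an operator system: since $\CP$ is a complete isometry, $\CP(\CA)\subset\BL$ is an operator system $M$-bimodularly completely isometric to $\CA$, and Arveson extension / Hahn-Banach for ucp maps into $\BL$ extends $\CP_3\circ\CP^{-1}:\CP(\CA)\to\BL$ to a ucp $\CP_4:\BL\to\BL$. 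The main subtlety — and the step I expect to be the main obstacle — is verifying that $\CP_4$ is automatically $M$-bimodular (so that it lies in $\UCP_M(\BL,\BL)$, not merely $\UCP(\BL,\BL)$). This follows because $\CP$ and $\CP_3$ are $M$-bimodular and fix $1$, hence restrict to the identity on $M\subset\CA$; therefore $\CP_4|_M=\id$, so $M$ sits in the multiplicative domain of $\CP_4$ (by Choi's theorem, cf.\ \cite[Theorem 3.1]{Ch74} as used in Lemma~\ref{ucp on A_i to homomorphism}), and consequently $\CP_4$ is $M$-bimodular. The parenthetical reformulation in (i) in terms of hyperstates is immediate from the bijection $\psi\leftrightarrow\CP_\psi$ recalled in Section~\ref{hyperstate}, since composition of $M$-bimodular ucp maps corresponds exactly to convolution of hyperstates.
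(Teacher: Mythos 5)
Your proposal is correct and follows essentially the same route as the paper's own proof: the same chain (i) $\Rightarrow$ (ii) $\Rightarrow$ (iii) $\Rightarrow$ (i), the same norm estimate for the first two implications, and the same Arveson-extension argument via injectivity of $B(L^2(M))$ for the last. The only difference is that you spell out the multiplicative-domain justification for why $\CP_4$ is $M$-bimodular, a step the paper asserts more tersely; this is a welcome clarification but not a different argument.
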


\begin{definition}\label{def NC topological boundary}
For a C$^*$-inclusion $M\subset \CA$, we say that $\CA$ is a \textbf{$M$-boundary} if it satisfies one of the equivalent conditions (i)-(iii) in Theorem \ref{eq def of boundary}.
\end{definition}

The following example of a noncommutative topological boundary is a direct corollary of Theorem \ref{Gamma A boundary}, to which we refer for the proof.
\begin{example}
Let $\Gamma$ be a countable discrete group, $X$ be a $\Gamma$-boundary, and fix a point $x\in X$. By the $\Gamma$-minimality of $X$, we can embed $C(X)$ into $\ell^\infty(\Gamma)$ as a C$^*$-subalgebra through the map
$$P_{\delta_x}: f\in C(X)\mapsto(g\mapsto f(gx))\in\ell^\infty(\Gamma).$$
Then the C$^*$-algebra $\CB_X=C^*(L(\Gamma),C(X))\subset B(\ell^2(\Gamma))$ is an $L(\Gamma)$-boundary.
\end{example}

Now let us define the Furstenberg boundary in our setting. Boundaries defined as injective envelopes have appeared previously in various settings, where they are referred to as \textbf{Furstenberg--Hamana boundaries}; see, for example, \cite{KK14, BK21, Bor19, KKSV22}. We adopt the same approach in our setting:
\begin{definition}
We define the \textbf{$M$-Furstenberg boundary}, $\partial_F M$, to be the injective envelope $I(M)$ of $M$ \cite{Ham79}.
\end{definition}

The following proposition is \cite[Lemma 4.6]{Ham79}:
\begin{proposition}\label{boundary is subalg of F boundary}
Up to $M$-bimodular $*$-isomorphisms, any $M$-boundary is a C$^*$-subalgebra of $\partial_F M$ that contains $M$.
\end{proposition}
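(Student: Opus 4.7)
The plan is to invoke directly the universal property of the injective envelope $I(M) = \partial_F M$ recalled in Subsection 2.2. By Definition \ref{def NC topological boundary}, an $M$-boundary $\CA$ satisfies condition (iii) of Theorem \ref{eq def of boundary}, which is precisely Hamana's definition that $M \subset \CA$ is an essential inclusion. The universal property of $I(M)$ then states that any essential extension of $M$ is, up to $M$-bimodular $*$-isomorphism, a C$^*$-subalgebra of $I(M) = \partial_F M$ containing $M$, yielding exactly the desired conclusion.

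Unpacking the universal property, I would produce the embedding in three steps. First, injectivity of $\partial_F M$ allows one to extend the inclusion $M \hookrightarrow \partial_F M$ to a ucp map $\Phi : \CA \to \partial_F M$, which is automatically $M$-bimodular since $M$ sits in its multiplicative domain. Second, essentiality, i.e.\ condition (iii) of Theorem \ref{eq def of boundary} applied with $\CB = \partial_F M$, forces $\Phi$ to be completely isometric. Third, one upgrades this completely isometric $M$-bimodular ucp map to an $M$-bimodular $*$-monomorphism by the rigidity of the injective envelope, as in Hamana's original argument. The only genuinely nontrivial step is this last promotion from a complete order embedding to a $*$-monomorphism, but it is standard and may be cited directly from \cite[Lemma 4.6]{Ham79}. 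In effect, the proposition is a translation of Definition \ref{def NC topological boundary} into the language of injective envelopes, so no new obstacle appears once Theorem \ref{eq def of boundary} is in place.
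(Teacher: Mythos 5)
Your proposal is correct and matches the paper's treatment: the paper offers no independent argument but simply identifies the proposition with \cite[Lemma 4.6]{Ham79}, which is exactly the universal property of the injective envelope that you invoke. Your three-step unpacking (ucp extension by injectivity, complete isometry by essentiality, promotion to a $*$-monomorphism via Hamana's rigidity argument) is the standard proof of that lemma, so nothing differs in substance.
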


The following proposition is \cite[Proposition 4.8]{Ham79}:
\begin{proposition}
The von Neumann algebra $(M,\tau)$ is amenable if and only if $\partial_F M$ is trivial, i.e., $\partial_F M=M$.
\end{proposition}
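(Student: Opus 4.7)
The plan is to combine Hamana's uniqueness theorem for the injective envelope with Connes' theorem that, for a (separable) tracial von Neumann algebra, amenability is equivalent to injectivity as an operator system. Both implications of the proposition then reduce to routine bookkeeping about $M$-bimodular ucp maps.

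For the forward direction, suppose $(M,\tau)$ is amenable. By Connes' theorem $M$ is injective, so in particular it is injective as a C$^*$-algebra/operator system. I would then observe that the trivial inclusion $M\subset M$ is automatically essential in the sense recalled in the preliminaries: for any other C$^*$-inclusion $M\subset\mathcal C$, the set $\UCP_M(M,\mathcal C)$ contains only the inclusion map, which is completely isometric. Thus $M\subset M$ is an essential injective extension of $M$, and by the uniqueness of the injective envelope up to $M$-bimodular $*$-isomorphism, $\partial_F M = I(M)\cong M$.

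For the backward direction, suppose $\partial_F M = M$. By definition $I(M)$ is injective as a C$^*$-algebra, so $M$ itself is injective as an operator system. Applying Connes' theorem in the opposite direction gives that $M$ is amenable.

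The only step with any content beyond citation is the essentiality of the trivial inclusion in the forward direction, which, as noted, is immediate from the definition since $\UCP_M(M,\mathcal C)$ contains only the inclusion. The genuine depth of the proposition is therefore entirely imported from Connes' equivalence between amenability and injectivity; no new boundary-theoretic input is required.
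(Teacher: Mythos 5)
Your proof is correct and takes essentially the same route as the paper, which gives no argument of its own but simply cites Hamana's result that a C$^*$-algebra is injective precisely when its injective envelope is trivial, combined implicitly with Connes' equivalence of amenability and injectivity for separable tracial von Neumann algebras. Your spelled-out essentiality/uniqueness step for the forward direction is just a more detailed version of that citation.
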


\begin{proposition}\label{exists completely isometry}
Let $\CA$ be an $M$-boundary. Then for any C$^*$-inclusion $M\subset \CB$ such that $\CB$ is injective, there exists a completely isometric $\CP\in\UCP_M(\CA,\CB)$.
\end{proposition}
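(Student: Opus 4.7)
The plan is to build the desired $\CP$ in two moves: first produce \emph{some} $\CP\in\UCP_M(\CA,\CB)$ by an injectivity extension argument, and then invoke essentiality of the inclusion $M\subset\CA$ (condition (iii) of Theorem \ref{eq def of boundary}) to upgrade this $\CP$ to a complete isometry.

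For the first move, I view the inclusion $\iota: M\hookrightarrow\CB$ as a unital completely positive (in fact completely isometric) map from $M$, regarded as an operator subsystem of $\CA$, into $\CB$. Because $\CB$ is injective, $\iota$ extends to a ucp map $\CP:\CA\to\CB$ with $\CP|_{M}=\id_M$. It remains to upgrade this to $M$-bimodularity: since $\CP$ restricted to the C$^*$-subalgebra $M\subset\CA$ is a $\ast$-homomorphism, $M$ lies in the multiplicative domain of $\CP$ by Choi's theorem (cited in the excerpt as \cite[Theorem 3.1]{Ch74}), so $\CP(xTy)=x\CP(T)y$ for all $x,y\in M$ and $T\in\CA$. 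Hence $\CP\in\UCP_M(\CA,\CB)$.

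For the second move, I apply the equivalence of (i)--(iii) in Theorem \ref{eq def of boundary} to the $M$-boundary $\CA$: in particular condition (iii) says precisely that for every C$^*$-inclusion $M\subset\CB$, every map in $\UCP_M(\CA,\CB)$ is automatically completely isometric. Applied to the $\CP$ built above, this concludes the proof.

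There is essentially no substantive obstacle here; the whole statement is a matter of combining two facts that are already on the table. The only place one has to pay minor attention is in passing from $\CP|_M=\id_M$ to $M$-bimodularity, and for this the multiplicative domain argument suffices and requires nothing beyond what is quoted in the excerpt. Nothing about normality, weak density, or dynamics of a group enters.
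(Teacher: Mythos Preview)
Your proof is correct. It differs from the paper's route, which factors through the injective envelope: the paper invokes \cite[Theorem 4.1]{Ham79} to embed $\partial_F M=I(M)$ completely isometrically into the injective $\CB$, then uses Proposition \ref{boundary is subalg of F boundary} to view $\CA$ as a subalgebra of $\partial_F M$ and restricts. Your argument bypasses the injective envelope entirely, applying the injectivity extension and the essentiality condition (iii) of Theorem \ref{eq def of boundary} directly to $\CA$. This is strictly more elementary: you use only the definition of an $M$-boundary and the universal property of injectivity, whereas the paper's proof implicitly relies on the same mechanism but packaged inside Hamana's theorem for $I(M)$ plus the separate structural fact that every $M$-boundary sits inside $\partial_F M$. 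The paper's approach has the mild advantage of emphasizing that a single universal object $\partial_F M$ governs all such embeddings, but for the statement at hand your direct argument is cleaner.
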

\begin{proof}
By \cite[Theorem 4.1]{Ham79}, since $\CB$ is injective, we have that $\partial_F M=I(M)$ can be viewed as a sub-operator system of $\CB$, i.e., there exists a completely isometric $\CP'\in\UCP_M(\partial_F M,\CB)$. By Proposition \ref{boundary is subalg of F boundary}, we can view $\CA$ as a subalgebra of $\partial_F M$, then $\CP=\CP'|_\CA\in\UCP_M(\CA,\CB)$ is completely isometric.
\end{proof}

\section{\texorpdfstring{$\varphi$}{}-proximality and \texorpdfstring{$\varphi$}{}-unique stationary noncommutative boundary}\label{section prox usb}

Let $\Gamma$ be a countable discrete group, $\mu\in\prob(\Gamma)$ be a generating measure and $(B,\nu_B)$ be the $\mu$-Poisson boundary. Following \cite{Fu63,Fu63a}, given a compact metrizable $(\Gamma,\mu)$-space $(Y,\nu_Y)$, there exists an (essentially) unique $\Gamma$-equivariant measurable map $\beta_{\nu_Y}:B\to\prob(Y)$ with $\nu_Y=\int_B\beta_{\nu_Y}(b)\d\nu_B(b)$. The map $\beta_{\nu_Y}$ is usually called \textbf{Furstenberg's boundary map}. Recall that a compact metrizable $\Gamma$-space $X$ is called \textbf{$\mu$-proximal} if for any $\mu$-stationary $\nu\in\prob(X)$, $(X,\nu)$ is a $\mu$-boundary, i.e., the Furstenberg’s boundary map $\beta_\nu:B\to\prob(X)$ satisfies that for $\nu_B$-a.e. $b\in B$, $\beta_\nu(b)=\delta_{\pi(b)}\in \prob(X)$ is a Dirac mass. 

Fix a separable tracial von Neumann algebra $(M,\tau)$. Following \cite[Theorem 3.2]{Zh23b}, for a C$^*$-inclusion $M\subset \CA$ and $\varphi$-stationary $\eta\in\CS_\tau(\CA)$, the \textbf{noncommutative Furstenberg’s boundary map} associated with $\eta$ is defined to be the unique $M$-bimodular ucp map $\Phi_\eta:\CA\to\CB_\varphi$ with $\eta=\zeta\circ\Phi_\eta$. Inspired by these facts, we are able to define the noncommutative analogue of $\mu$-proximality.

Fix a normal regular strongly generating hyperstate $\varphi\in \CS_\tau(\BL)$ and let $(\CB_\varphi,\zeta)$ be the $\varphi$-Poisson boundary.
\begin{definition}\label{def prox}
We say that a C$^*$-inclusion $M\subset \CA$ is \textbf{$\varphi$-proximal} if it satisfies one of the following equivalent conditions (see \cite[Theorem 3.2]{Zh23b} for the proof of equivalence):
\begin{itemize}
    \item [(1)] For any $\varphi$-stationary $\eta\in\CS_\tau(\CA)$, the noncommutative Furstenberg’s boundary map $\Phi_\eta:\CA\to\CB_\varphi$ is a faithful $*$-homomorphism;
    \item [(2)] Any $\CP\in\UCP_M(\CA,\CB_\varphi)$ is a faithful $*$-homomorphism.
\end{itemize}
\end{definition}
The following proposition is the noncommutative analogue of \cite[Corollary 2.10]{Mar91}.
\begin{proposition}\label{prox unique}
Let $M\subset \CA$ be a $\varphi$-proximal C$^*$-inclusion. For any C$^*$-inclusion $(M,\tau)\subset (\CB,\psi)$ with faithful $\varphi$-stationary $\psi\in \CS_\tau(\CB)$, we have
\begin{itemize}
    \item [(i)] Any $\CP\in \UCP_M(\CA,\CB)$ is a faithful $*$-homomorphism;
    \item [(ii)] $\UCP_M(\CA,\CB)$ admits at most one element.
    
\end{itemize}
\end{proposition}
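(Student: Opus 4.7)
The plan is to reduce both parts to the $\varphi$-proximity hypothesis on $\CA$ via the noncommutative Furstenberg boundary map $\Phi_\psi:\CB\to\CB_\varphi$, which exists as the unique $M$-bimodular ucp map with $\zeta\circ\Phi_\psi=\psi$. The first observation I would record is that $\Phi_\psi$ is faithful: if $\Phi_\psi(T^*T)=0$ for some $T\in\CB$, then $\psi(T^*T)=\zeta(\Phi_\psi(T^*T))=0$, and faithfulness of $\psi$ forces $T=0$. (Here I use that the canonical hyperstate $\zeta$ is faithful, which holds because $\varphi$ is normal, regular and strongly generating.)

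For (i), given any $\CP\in\UCP_M(\CA,\CB)$, the composition $\Phi_\psi\circ\CP$ lies in $\UCP_M(\CA,\CB_\varphi)$, so condition (2) of Definition \ref{def prox} says it is a faithful $*$-homomorphism. Applying Lemma \ref{ucp homomorphism} with $\CP_1=\CP$ and $\CP_2=\Phi_\psi$ (now known to be faithful) then shows $\CP$ itself is a $*$-homomorphism. Faithfulness of $\CP$ is immediate: if $\CP(T^*T)=0$ then $(\Phi_\psi\circ\CP)(T^*T)=0$, and faithfulness of the composition forces $T=0$.

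For (ii), I would use the classical midpoint argument made available by (i). Given $\CP,\CP'\in\UCP_M(\CA,\CB)$, the map $\CQ:=(\CP+\CP')/2$ also lies in $\UCP_M(\CA,\CB)$ and is therefore a $*$-homomorphism by (i). Comparing the two expressions
$$\CQ(T^*T)=\frac{\CP(T^*T)+\CP'(T^*T)}{2}=\frac{\CP(T)^*\CP(T)+\CP'(T)^*\CP'(T)}{2}$$
and
$$\CQ(T^*T)=\CQ(T)^*\CQ(T)=\left(\frac{\CP(T)+\CP'(T)}{2}\right)^*\left(\frac{\CP(T)+\CP'(T)}{2}\right)$$
yields $(\CP(T)-\CP'(T))^*(\CP(T)-\CP'(T))=0$ for every $T\in\CA$, whence $\CP=\CP'$.

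I do not expect any serious technical obstacle; the one delicate point is that the argument genuinely needs both hypotheses on $\psi$, since $\varphi$-stationarity ensures the existence of $\Phi_\psi$ and faithfulness of $\psi$ ensures faithfulness of $\Phi_\psi$, which is precisely what makes Lemma \ref{ucp homomorphism} applicable. Once $\Phi_\psi$ is secured, (i) and (ii) are formal consequences of the proximity definition combined with the multiplicative-domain lemmas of Section \ref{hyperstate}.
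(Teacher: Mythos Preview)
Your proof is correct and follows essentially the same route as the paper: both use the boundary map $\Phi_\psi$ (faithful because $\psi$ is) to push $\CP$ into $\UCP_M(\CA,\CB_\varphi)$ and invoke proximity plus Lemma~\ref{ucp homomorphism} for (i), and both use the midpoint $\tfrac{1}{2}(\CP+\CP')$ for (ii). The only cosmetic difference is that in (ii) the paper cites \cite[Proposition 4.14]{Ham79} (that $*$-homomorphisms are extreme in $\UCP_M$), whereas you unpack exactly that fact via the parallelogram identity.
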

\begin{proof}
(i) Fix a $\CP\in \UCP_M(\CA,\CB)$. Let $\Phi_\psi:(\CB,\psi)\to (\CB_\varphi,\zeta)$ be the noncommutative Furstenberg’s boundary map associated with $\psi$. Since $\psi$ is faithful, then so is $\Phi_\psi$. Since $\Phi_\psi\circ \CP\in \UCP_M(\CA,\CB_\varphi)$, it must be a faithful $*$-homomorphism. Moreover, since $\Phi_\psi$ is faithful, by Lemma \ref{ucp homomorphism}, $\CP$ is a $*$-homomorphism. Also, since $\Phi_\psi\circ \CP$ is faithful, then $\CP$ must be faithful.\\

(ii) For any $\CP_1,\CP_2\in \UCP_M(\CA,\CB)$, since $\frac{1}{2}(\CP_1+\CP_2)\in \UCP_M(\CA,\CB)$, by (i), $\frac{1}{2}(\CP_1+\CP_2)$ is a $*$-homomorphism. By \cite[proposition 4.14]{Ham79}, $\frac{1}{2}(\CP_1+\CP_2)$ is an extreme point of $\UCP_M(\CA,\CB)$ for being a $*$-homomorphism. Therefore, we must have $\CP_1=\CP_2$ and $\UCP_M(\CA,\CB)$ admits at most one element.
\end{proof}

The following corollary is the noncommutative analogue of \cite[Theorem 3.11]{HK23}.
\begin{corollary}\label{prox boundary state}
For any $\varphi$-proximal $M\subset \CA$, $\CA$ is an $M$-boundary that admits a unique $\varphi$-stationary hyperstate.
\end{corollary}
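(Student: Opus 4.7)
The plan is to verify both assertions by combining the abstract existence provided by Lemma~\ref{stationary hyperstate} with the rigidity imposed by $\varphi$-proximality.

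For the $M$-boundary property, I would verify condition~(ii) of Theorem~\ref{eq def of boundary}. Fix an arbitrary $\CP\in\UCP_M(\CA,\BL)$. Since $\har(\CP_\varphi)$ is injective, there exists an $M$-bimodular conditional expectation $E:\BL\to\har(\CP_\varphi)$; then $E\circ\CP$, regarded via the $\varphi$-Poisson transform as an element of $\UCP_M(\CA,\CB_\varphi)$, is a faithful $*$-homomorphism by Definition~\ref{def prox}~(2), in particular completely isometric. The chain
$$\|T\|=\|E\circ\CP(T)\|\leq\|\CP(T)\|\leq\|T\|,$$
valid at every matricial level, then forces $\CP$ itself to be completely isometric, establishing condition~(ii).

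For uniqueness of the $\varphi$-stationary hyperstate, recall from Section~\ref{hyperstate} that the bijection $\eta\mapsto\CP_\eta$ identifies $\varphi$-stationary hyperstates on $\CA$ with $M$-bimodular ucp maps $\CA\to\har(\CP_\varphi)$, and the $\varphi$-Poisson transform further identifies the latter with the elements of $\UCP_M(\CA,\CB_\varphi)$. Under the standing assumption that $\varphi$ is normal, regular, and strongly generating, the canonical hyperstate $\zeta$ on $\CB_\varphi$ is a normal faithful $\varphi$-stationary hyperstate, so Proposition~\ref{prox unique}~(ii) applied to the inclusion $(M,\tau)\subset(\CB_\varphi,\zeta)$ shows that $\UCP_M(\CA,\CB_\varphi)$ contains at most one element. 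Existence of a $\varphi$-stationary hyperstate is guaranteed by Lemma~\ref{stationary hyperstate}, so $\CA$ admits exactly one.

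The argument encounters no serious obstacle, since the real work has been packaged inside Definition~\ref{def prox} and Proposition~\ref{prox unique}; the proof is essentially bookkeeping around the hyperstate/ucp bijection. The one subtlety worth flagging is that faithfulness of $\zeta$, used crucially in the uniqueness step, relies on $\varphi$ being normal regular strongly generating, so that the proposition cited truly applies.
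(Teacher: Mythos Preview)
Your proof is correct and follows essentially the same route as the paper's own argument: for the boundary property you compose an arbitrary $\CP\in\UCP_M(\CA,\BL)$ with the expectation onto $\har(\CP_\varphi)$ and invoke $\varphi$-proximality to obtain a completely isometric map, then deduce that $\CP$ itself is completely isometric via the same norm chain; for uniqueness you appeal to Proposition~\ref{prox unique}~(ii) with $(\CB,\psi)=(\CB_\varphi,\zeta)$ together with Lemma~\ref{stationary hyperstate}. The only cosmetic difference is that the paper cites Proposition~\ref{prox unique}~(i) rather than Definition~\ref{def prox}~(2) for the first step, but with the choice $(\CB,\psi)=(\CB_\varphi,\zeta)$ these amount to the same assertion.
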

\begin{proof}
Take $(\CB,\psi)=(\CB_\varphi,\zeta)$ in Proposition \ref{prox unique}. Let $E:\BL\to \har(\CP_\varphi)$ be the conditional expectation and $\CP_\zeta:\CB_\varphi\to \har(\CP_\varphi)$ be the Poisson transform. Then by Proposition \ref{prox unique} (i), for any $\CP\in \UCP_M(\CA,\BL)$, $\CP_\zeta^{-1}\circ E\circ \CP\in \UCP_M(\CA,\CB_\varphi)$ is a faithful $*$-homomorphism, which is also completely isometric. By the same discussion as in (\ref{completely isometric}), $\CP$ must be completely isometric. Hence $\CA$ is an $M$-boundary.  

By Lemma \ref{stationary hyperstate} and Proposition \ref{prox unique} (ii), $\UCP_M(\CA,\CB_\varphi)$ admits exactly one element. Moreover, by the bijection between $\varphi$-stationary hyperstates and $\UCP_M(\CA,\CB_\varphi)$ \cite[Theorem 3.2]{Zh23b}, $\CA$ admits a unique $\varphi$-stationary hyperstate.
\end{proof}

Following \cite[Definition 3.9]{HK23}, for $\mu\in\prob(\Gamma)$, a $\mu$-boundary $(B_0,\nu_0)$ is a \textbf{$\mu$-unique stationary boundary} ($\mu$-USB) if it has a compact model $(\Bar{B}_0,\Bar{\nu}_0)$ such that $\Bar{\nu}_0$ is the unique $\mu$-stationary Borel probability measure on $\Bar{B}_0$. The $\mu$-USB has been greatly used in studying C$^*$-simplicity of groups \cite{HK23}.

Following \cite[Definition 3.7]{Zh23}, up to $*$-isomorphism, a \textbf{$\varphi$-boundary} $(\CB_0,\zeta_0)$ is a von Neumann subalgebra of $(\CB_\varphi,\zeta)$. Inspired by these facts, we are able to define the noncommutative analogue of $\mu$-USB.
\begin{definition}
We say that a $\varphi$-boundary $(\CB,\zeta|_\CB)\subset(\CB_\varphi,\zeta)$ is a \textbf{$\varphi$-unique stationary noncommutative boundary} ($\varphi$-USNCB) if it admits a weakly dense C$^*$-subalgebra ${\CA_0}\subset \CB$ with $M\subset {\CA_0}$ such that $\zeta|_{\CA_0}$ is the unique $\varphi$-stationary hyperstate on ${\CA_0}$.
\end{definition}

The following proposition establishes the link between the notions of $\varphi$-proximality and $\varphi$-USNCB.

\begin{proposition}\label{USB prox}
For a $\varphi$-USNCB $(\CB,\zeta|_\CB)$, the weakly dense C$^*$-subalgebra ${\CA_0}$ with unique $\varphi$-stationary hyperstate is $\varphi$-proximal. Conversely, for a $\varphi$-proximal inclusion $M\subset {\CA_0}$, let $\Phi: {\CA_0}\to \CB_\varphi$ be the unique $M$-bimodular ucp map, then the weak closure of $(\Phi({\CA_0}),\zeta|_{\Phi({\CA_0})})$ is a $\varphi$-USNCB.
\end{proposition}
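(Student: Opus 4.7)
The plan is to exploit the bijection from \cite[Theorem 3.2]{Zh23b} between $\varphi$-stationary hyperstates $\eta$ on a C$^*$-algebra $\CA$ containing $M$ and elements of $\UCP_M(\CA,\CB_\varphi)$, under which $\eta$ corresponds to the unique $\Phi_\eta\in\UCP_M(\CA,\CB_\varphi)$ with $\eta=\zeta\circ\Phi_\eta$. Under this correspondence, uniqueness of the $\varphi$-stationary hyperstate on $\CA$ is equivalent to $\UCP_M(\CA,\CB_\varphi)$ being a singleton, and this is what links the two conditions in the proposition.

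For the first direction, I would verify condition (2) of Definition \ref{def prox}. Given a $\varphi$-USNCB $(\CB,\zeta|_\CB)$ with weakly dense C$^*$-subalgebra $\CA_0\subset\CB$ satisfying the uniqueness hypothesis, let $\CP\in\UCP_M(\CA_0,\CB_\varphi)$ be arbitrary. The hyperstate $\zeta\circ\CP$ is then $\varphi$-stationary on $\CA_0$, so by the uniqueness hypothesis it coincides with $\zeta|_{\CA_0}=\zeta\circ\iota$, where $\iota:\CA_0\hookrightarrow\CB_\varphi$ is the inclusion. The bijection recalled above then forces $\CP=\iota$, which is a faithful $*$-homomorphism, so $\CA_0$ is $\varphi$-proximal.

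For the second direction, I would let $\Phi:\CA_0\to\CB_\varphi$ be the unique $M$-bimodular ucp map (existence by Lemma \ref{stationary hyperstate}, uniqueness by Proposition \ref{prox unique}(ii)); by Definition \ref{def prox}(2) it is a faithful $*$-homomorphism. Hence $\Phi(\CA_0)$ is a C$^*$-subalgebra of $\CB_\varphi$ containing $M$ (since $\Phi|_M=\id_M$), and its weak closure $\CB$ is a von Neumann subalgebra of $\CB_\varphi$, hence a $\varphi$-boundary. To conclude that $\zeta|_{\Phi(\CA_0)}$ is the unique $\varphi$-stationary hyperstate on $\Phi(\CA_0)$, I would take an arbitrary such hyperstate $\eta$ together with its associated $\CP\in\UCP_M(\Phi(\CA_0),\CB_\varphi)$; then $\CP\circ\Phi\in\UCP_M(\CA_0,\CB_\varphi)$ must coincide with $\Phi$ by Proposition \ref{prox unique}(ii), and surjectivity of $\Phi$ onto $\Phi(\CA_0)$ forces $\CP$ to be the inclusion $\Phi(\CA_0)\hookrightarrow\CB_\varphi$, so $\eta=\zeta|_{\Phi(\CA_0)}$. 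No serious obstacle is anticipated; both directions amount to translating uniqueness through the bijection of \cite[Theorem 3.2]{Zh23b}, and the only point requiring care is that $\Phi$ must be a genuine $*$-homomorphism (so that $\Phi(\CA_0)$ is a C$^*$-subalgebra), which is precisely what $\varphi$-proximality provides.
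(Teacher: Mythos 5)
Your proposal is correct and follows essentially the same route as the paper: both directions are handled by translating uniqueness of the $\varphi$-stationary hyperstate into $\UCP_M(\,\cdot\,,\CB_\varphi)$ being a singleton via the bijection of \cite[Theorem 3.2]{Zh23b}, with $\varphi$-proximality supplying that the unique map $\Phi$ is a faithful $*$-homomorphism. The only cosmetic difference is that for the converse you verify uniqueness of the stationary hyperstate on $\Phi(\CA_0)$ directly from Proposition \ref{prox unique}(ii), whereas the paper invokes Corollary \ref{prox boundary state}, which rests on the same fact.
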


\begin{proof}
Let $(\CB,\zeta|_\CB)$ be a $\varphi$-USNCB and  ${\CA_0}\subset \CB$ be the weakly dense C$^*$-subalgebra with unique $\varphi$-stationary hyperstate. Then by \cite[Theorem 3.2]{Zh23b}, we have $\UCP_M({\CA_0},\CB_\varphi)=\{\id_{{\CA_0}}\}$. Hence every element of $\UCP_M({\CA_0},\CB_\varphi)$ is a faithful $*$-homomorphism and $M\subset{\CA_0}$ is $\varphi$-proximal.\\

For a $\varphi$-proximal inclusion $M\subset {\CA_0}$, let $\Phi: {\CA_0}\to \CB_\varphi$ be the unique $M$-bimodular ucp map. Then by Proposition \ref{prox unique}, $\Phi$ is a faithful $*$-homomorphism. Hence $(\CB_0,\zeta|_{\CB_0})=\overline{(\Phi({\CA_0}),\zeta|_{\Phi({\CA_0})})}^{\mathrm{wo}}$ is a von Neumann subalgebra of $(\CB_\varphi,\zeta)$, which is a $\varphi$-boundary. By Corollary \ref{prox boundary state}, ${\CA_0}$ admits a unique $\varphi$-stationary hyperstate and so does $\Phi({\CA_0})$. Therefore, $\Phi({\CA_0})\subset \CB_0$ is a weakly dense C$^*$-subalgebra with unique $\varphi$-stationary hyperstate. Hence $(\CB_0,\zeta|_{\CB_0})$ is a $\varphi$-USNCB.
\end{proof}

\begin{example}
Let $M=L(\Gamma)$ and $\varphi (T)=\sum_{\gamma\in\Gamma}\mu(\gamma)\langle T\mathds{1}_{\gamma},\mathds{1}_{\gamma}\rangle$ for a generating $\mu\in\prob(\Gamma)$. Assume that $(B_0,\nu_0)$ is a $\mu$-USB. Then the $L(\Gamma\car B_0)$ is a $\varphi$-USNCB.
\end{example}
\begin{proof}
We may assume that $B_0$ is a compact metrizable $\Gamma$-space with a unique $\mu$-stationary probability Borel measure. Let $(B,\nu_B)$ be the $\mu$-Poisson boundary. Then $\id_{C(B_0)}: C(B_0)\hookrightarrow  L^\infty(B)$ is the unique $\Gamma$-equivariant ucp map  from $C(B_0)$ to $ L^\infty(B)$. Let ${\CA_0}\subset L(\Gamma\car B_0)$ be the C$^*$-subalgebra generated by $L(\Gamma)$ and $C(B_0)$. Then $\CA_0$ is weakly dense in $L(\Gamma\car B_0)$ and we only need to prove that ${\CA_0}$ admits a unique $M$-bimodular ucp map to $\CB_\varphi=L(\Gamma\car B)$ \cite[Theorem 4.1]{Izu04}.

Fix a ucp map $\Phi\in\UCP_M({\CA_0}, L(\Gamma\car B))$. Let $E:L(\Gamma\car B)\to  L^\infty(B)$ be the canonical $\Gamma$-equivariant conditional expectation. Then $E\circ\Phi|_{C(B_0)}:C(B_0)\to  L^\infty(B)$ is also a $\Gamma$-equivariant ucp map. Hence we must have ${E\circ\Phi|_{C(B_0)}}=\id_{C(B_0)}$, which is a $*$-homomorphism. Since $E$ is faithful, by Lemma \ref{ucp homomorphism}, we must have both $\Phi|_{C(B_0)}$ and $E|_{\Phi(C(B_0))}$ are $*$-homomorphisms. Therefore, $\Phi(C(B_0))$ is contained in the multiplicative domain of $E$, which is $ L^\infty(B)$. Hence we must have $\Phi|_{C(B_0)}=E\circ\Phi|_{C(B_0)}=\id_{C(B_0)}$.

Moreover, since $\Phi|_{C(B_0)}=\id_{C(B_0)}$ and $\Phi|_{L(\Gamma)}=\id_{L(\Gamma)}$ are both $*$-homomorphisms and $\CA_0=\mathrm{C}^*(L(\Gamma),C(B_0))$, by Lemma \ref{ucp on A_i to homomorphism}, $\Phi$ must be a $*$-homomorphism and $\Phi=\id_{\CA_0}$. Therefore, the ucp map $\Phi: {\CA_0} \to L(\Gamma\car B)$ is unique and $L(\Gamma\car B_0)$ is a $\varphi$-USNCB.
\end{proof}

\section{\texorpdfstring{$M$}{}-tight inclusions}\label{section tight inclusion}
In this section, we extend the notion of tight inclusions and some of the related results in \cite{HK24} to the noncommutative setting. 

Following \cite{HK24}, for a countable discrete group $\Gamma$, an inclusion of $\Gamma$-C$^*$-algebras $A\subset B$ is \textbf{$\Gamma$-tight} if $\UCP_\Gamma(A,B)=\{\id_A\}$. Here $\UCP_\Gamma(A,B)$ is the set of $\Gamma$-equivariant ucp maps from $A$ to $B$. We still fix a separable tracial von Neumann algebra $(M,\tau)$. Then we have the following definition as a noncommutative analogue of $\Gamma$-tight inclusion.
\begin{definition}
We say that a C$^*$-inclusion $\CA\subset\CB$ is a \textbf{$M$-inclusion} if $M\subset\CA\subset\CB$. We say that an $M$-inclusion $\CA\subset\CB$ is \textbf{$M$-tight} if 
$$\UCP_M(\CA,\CB)=\{\id_\CA\}.$$
\end{definition}

The following two propositions show that tightness can be passed to intermediate subalgebras and C$^*$-extensions with faithful conditional expectation, which are the analogues of \cite[Proposition 2.2 and Lemma 2.12]{HK24}.
\begin{proposition}
Assume that $\CA\subset\CB$ is an $M$-tight inclusion, then for any C$^*$-algebra $\mathcal{C}$ with $\CA\subset\mathcal{C}\subset\CB$, the inclusion $\CA\subset\mathcal{C}$ is also $M$-tight.
\end{proposition}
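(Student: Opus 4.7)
The plan is to reduce $M$-tightness of the intermediate inclusion $\CA\subset\mathcal{C}$ directly to $M$-tightness of $\CA\subset\CB$ by post-composition with the canonical inclusion $\iota:\mathcal{C}\hookrightarrow\CB$. Concretely, given an arbitrary $\CP\in\UCP_M(\CA,\mathcal{C})$, I would form $\iota\circ\CP:\CA\to\CB$ and observe that it is again $M$-bimodular and ucp, hence an element of $\UCP_M(\CA,\CB)$.

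By the $M$-tightness hypothesis on $\CA\subset\CB$, this set contains only the canonical inclusion $\CA\hookrightarrow\CB$, so $\iota\circ\CP$ equals this inclusion. Since the inclusion $\CA\hookrightarrow\CB$ factors through $\mathcal{C}$ as $\iota\circ\id_\CA$, and since $\iota$ is injective, I can cancel $\iota$ to conclude $\CP(a)=a$ for every $a\in\CA$, i.e.\ $\CP=\id_\CA$ as a map into $\mathcal{C}$. This shows $\UCP_M(\CA,\mathcal{C})=\{\id_\CA\}$, which is exactly the $M$-tightness of $\CA\subset\mathcal{C}$.

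There is essentially no obstacle here: the argument is a one-line diagram chase and does not require any of the heavier noncommutative boundary machinery developed earlier in the excerpt. The only small point to verify is that $\UCP_M(\CA,\mathcal{C})$ is nonempty and that $\iota\circ\CP$ is still $M$-bimodular, both of which are immediate from the definitions and from the fact that $M\subset\CA\subset\mathcal{C}\subset\CB$.
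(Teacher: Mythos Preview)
Your argument is correct and is essentially the same as the paper's proof, which simply observes that $\UCP_M(\CA,\mathcal{C})\subset\UCP_M(\CA,\CB)=\{\id_\CA\}$ via the inclusion $\mathcal{C}\hookrightarrow\CB$. The only superfluous remark is the worry about $\UCP_M(\CA,\mathcal{C})$ being nonempty: it automatically contains $\id_\CA$ since $\CA\subset\mathcal{C}$.
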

\begin{proof}
Since
$$\UCP_M(\CA,\mathcal{C})\subset\UCP_M(\CA,\CB)=\{\id_\CA\},$$
we must have $\UCP_M(\CA,\mathcal{C})=\{\id_\CA\}$.
\end{proof}

\begin{proposition}
Assume that $\CA\subset\CB\subset\mathcal{C}$ are $M$-inclusions and $\CA\subset\CB$ is $M$-tight. If there exists a faithful conditional expectation $E:\mathcal{C}\to\CB$, then $\CA\subset\mathcal{C}$ is also $M$-tight.
\end{proposition}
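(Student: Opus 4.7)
The plan is to pick an arbitrary $\Phi\in\UCP_M(\CA,\mathcal{C})$ and show $\Phi=\id_\CA$, using the faithful conditional expectation $E$ to reduce the problem to the $M$-tightness of $\CA\subset\CB$.

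First I would observe that the conditional expectation $E:\mathcal{C}\to\CB$ is automatically $\CB$-bimodular, hence $M$-bimodular (since $M\subset\CB$). Therefore $E\circ\Phi\in\UCP_M(\CA,\CB)$, and $M$-tightness of $\CA\subset\CB$ forces
$$E\circ\Phi=\id_\CA.$$
In particular, $E\circ\Phi$ is a $*$-homomorphism, so by Lemma \ref{ucp homomorphism} (applied with $\CP_1=\Phi$, $\CP_2=E$, which is faithful), $\Phi:\CA\to\mathcal{C}$ is itself a $*$-homomorphism, and $E$ restricts to a $*$-homomorphism on $\Phi(\CA)$.

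To finish, I would show $\Phi(a)=a$ for every $a\in\CA$ via a Kadison-Schwarz-style computation. For $a\in\CA\subset\CB$, using that $\Phi$ is a $*$-homomorphism, $E\circ\Phi=\id_\CA$, and $E$ is $\CB$-bimodular with $E|_\CB=\id_\CB$, one has
$$E\bigl((\Phi(a)-a)^*(\Phi(a)-a)\bigr)=E(\Phi(a^*a))-E(\Phi(a)^*)\,a-a^*\,E(\Phi(a))+a^*a=0.$$
Faithfulness of $E$ then gives $\Phi(a)=a$, so $\Phi=\id_\CA$ and $\UCP_M(\CA,\mathcal{C})=\{\id_\CA\}$.

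The argument is essentially a direct chain of applications of the hypotheses, so I do not expect a serious obstacle; the only point requiring mild care is the use of Lemma \ref{ucp homomorphism} to promote $\Phi$ from a ucp map to a $*$-homomorphism, which is what allows the final faithfulness step to collapse $\Phi(a)-a$ to zero.
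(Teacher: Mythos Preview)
Your proof is correct and follows essentially the same route as the paper: both compose with $E$ to get $E\circ\Phi=\id_\CA$ via $M$-tightness, then invoke Lemma~\ref{ucp homomorphism}. The paper finishes by noting that $\Phi(\CA)$ lies in the multiplicative domain of the faithful conditional expectation $E$, which is exactly $\CB$, so $\Phi=E\circ\Phi=\id_\CA$; your explicit computation of $E\bigl((\Phi(a)-a)^*(\Phi(a)-a)\bigr)=0$ is just an unpacking of that same fact.
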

\begin{proof}
Fix a $\CP\in\UCP_M(\CA,\mathcal{C})$. Only need to prove $\CP=\id_\CA$. We have 
$$E\circ\CP\in\UCP_M(\CA,\CB)=\{\id_\CA\}.$$
Hence $E\circ\CP=\id_\CA$. By Lemma \ref{ucp homomorphism}, $\CP(\CA)$ is contained in the multiplicative domain of $E$, i.e., $\CB$. Therefore, we have $\CP=E\circ\CP=\id_\CA$, which finishes the proof.
\end{proof}

The following proposition is the noncommutative analogue of \cite[Corollary 2.4]{HK24}.
\begin{proposition}
If there exists an $M$-tight inclusion $\CA\subset\CB$ with $M\subsetneqq\CA$, then $M$ is not amenable.
\end{proposition}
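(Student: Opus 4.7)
The plan is to prove the contrapositive: assuming $M$ is amenable, I will show that any $M$-tight inclusion $\CA\subset\CB$ forces $\CA=M$, which directly contradicts $M\subsetneqq \CA$.

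The key input is the proposition recalled earlier in the excerpt: $M$ is amenable if and only if $\partial_F M=M$. Since $\partial_F M$ was defined to be the injective envelope $I(M)$, amenability of $M$ gives $I(M)=M$, i.e.\ $M$ itself is injective as a C$^*$-algebra. This is the crucial structural consequence we will exploit.

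Given any $M$-inclusion $M\subset \CA\subset\CB$, the injectivity of $M$ lets us extend the identity map $\id_M\colon M\to M$ to a ucp map $E\colon \CA\to M$. Because $E|_M=\id_M$ is a $*$-homomorphism, Choi's theorem on multiplicative domains places $M$ inside the multiplicative domain of $E$, so $E$ is automatically $M$-bimodular. Composing with the inclusion $M\subset \CB$ yields $E\in\UCP_M(\CA,\CB)$, and of course $\id_\CA\in\UCP_M(\CA,\CB)$ as well. The $M$-tightness hypothesis $\UCP_M(\CA,\CB)=\{\id_\CA\}$ therefore forces $E=\id_\CA$. But the range of $E$ is contained in $M$, so $\CA=E(\CA)\subset M$, giving $\CA=M$ and contradicting $M\subsetneqq\CA$.

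There is essentially no obstacle here beyond correctly invoking the two ingredients (amenability of $M$ implies $M$ is injective; injectivity gives an $M$-bimodular ucp retraction onto $M$); the $M$-bimodularity of $E$ is the only point where one has to cite the multiplicative domain argument rather than producing it directly.
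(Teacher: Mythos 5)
Your proof is correct and follows essentially the same route as the paper: both arguments reduce to the fact that amenability of $M$ makes $M$ injective, extract a ucp retraction $E\colon\CA\to M$ (which, as you note via the multiplicative domain argument, is automatically $M$-bimodular, i.e.\ a conditional expectation), and then use $\UCP_M(\CA,\CB)=\{\id_\CA\}$ to force $\CA=M$. The only difference is that the paper states this more tersely by directly invoking the existence of a conditional expectation $\CA\to M$, while you spell out the bimodularity; incidentally, the paper's opening line ``Assume that $M$ is not amenable'' is a typo for ``amenable,'' which your contrapositive formulation gets right.
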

\begin{proof}
Assume that $M$ is amenable. Then there exists a conditional expectation $E:\CA\to M$. Then we have 
$$E\in\UCP_M(\CA,M)\subset\UCP_M(\CA,\CB)=\{\id_\CA\}.$$
Hence, we must have $E=\id_\CA$ and $\CA=M$, contradiction.
\end{proof}

\begin{proposition}\label{M' cap B}
Assume that $\CA\subset\CB$ is an $M$-tight inclusion, then 
$$M'\cap\CB=\CA'\cap\CB.$$
In particular, if the inclusion $\CA\subset\CB$ is irreducible (i.e., $\CA'\cap\CB=\C\cdot 1$), then so is $M\subset\CB$.
\end{proposition}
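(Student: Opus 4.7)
The plan is to establish the nontrivial inclusion $M' \cap \CB \subset \CA' \cap \CB$ (the reverse inclusion being immediate from $M \subset \CA$) by exploiting the $M$-tightness hypothesis via inner automorphisms implemented by unitaries in $M' \cap \CB$.

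First I would observe that $M' \cap \CB$ is a C$^*$-subalgebra of $\CB$, hence is the norm-closed linear span of its unitaries. So it suffices to show that every unitary $u \in M' \cap \CB$ lies in $\CA' \cap \CB$. Fix such a unitary $u$ and consider the map
\[
\CP_u : \CA \to \CB, \qquad \CP_u(a) = u a u^*.
\]
This is a unital $*$-homomorphism, hence ucp. The key point is that $\CP_u$ is $M$-bimodular: for any $m \in M$ and $a \in \CA$, the relation $um = mu$ (which holds because $u \in M' \cap \CB$) gives
\[
\CP_u(ma) = u m a u^* = m u a u^* = m\, \CP_u(a),
\]
and similarly $\CP_u(am) = \CP_u(a) m$. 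Therefore $\CP_u \in \UCP_M(\CA, \CB)$.

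Now I apply $M$-tightness: since $\UCP_M(\CA, \CB) = \{\id_\CA\}$, we must have $\CP_u = \id_\CA$, i.e.\ $u a u^* = a$ for every $a \in \CA$. Equivalently $u \in \CA' \cap \CB$. Spanning over all unitaries of $M' \cap \CB$ then yields $M' \cap \CB \subset \CA' \cap \CB$, proving the desired equality. The ``in particular'' clause follows immediately: if $\CA' \cap \CB = \C \cdot 1$, then $M' \cap \CB = \CA' \cap \CB = \C \cdot 1$, so $M \subset \CB$ is irreducible.

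There is no real obstacle here; the only thing to double-check is that implementing conjugation by a commuting unitary really does produce a map in $\UCP_M(\CA,\CB)$, and that the $M$-bimodularity uses precisely the hypothesis $u \in M'$. Once this is in place, $M$-tightness does all of the work in a single line.
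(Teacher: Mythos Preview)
Your proof is correct and follows essentially the same approach as the paper: for each unitary $u \in M' \cap \CB$, the conjugation $\mathrm{Ad}(u)|_\CA$ lies in $\UCP_M(\CA,\CB) = \{\id_\CA\}$, forcing $u \in \CA' \cap \CB$. The paper's version is simply a terser rendering of the same argument.
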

\begin{proof}
For any $u\in\mathcal{U}(M'\cap\CB)$, we have 
$$\mathrm{Ad}(u)|_\CA\in\UCP_M(\CA,\CB)=\{\id_A\}.$$
Hence $\mathrm{Ad}(u)|_\CA=\id_\CA$ and $u\in\CA'\cap\CB$.
\end{proof}

The following proposition is the noncommutative analogue of \cite[Proposition 2.7]{HK24}.
\begin{proposition}
For a C$^*$-inclusion $M\subset\CB$, $\CB$ admits a maximal C$^*$-subalgebra $\CA$ such that $\CA\subset\CB$ is $M$-tight.
\end{proposition}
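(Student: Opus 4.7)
The plan is to apply Zorn's lemma to the poset
\[
\mathcal{F} = \{\CA \subset \CB \text{ C$^*$-subalgebra} : M\subset\CA \text{ and } \CA\subset\CB \text{ is } M\text{-tight}\},
\]
ordered by inclusion. The maximal element promised by Zorn will be the desired subalgebra.

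First I would check nonemptiness: the trivial choice $\CA = M$ lies in $\mathcal{F}$, since any $\CP \in \UCP_M(M,\CB)$ is $M$-bimodular and unital, so $\CP(m) = \CP(m\cdot 1) = m\CP(1) = m$ for all $m \in M$, giving $\UCP_M(M,\CB) = \{\id_M\}$.

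Next I would verify the chain condition. Given a chain $\{\CA_i\}_{i \in I}$ in $\mathcal{F}$, form $\CA := \overline{\bigcup_i \CA_i}^{\Vert \cdot \Vert}$, which is a C$^*$-subalgebra of $\CB$ containing $M$. To see $\CA \in \mathcal{F}$, pick any $\CP \in \UCP_M(\CA,\CB)$. For each $i$, the restriction $\CP|_{\CA_i}$ lies in $\UCP_M(\CA_i,\CB) = \{\id_{\CA_i}\}$, hence agrees with the identity on $\CA_i$. Since $\CP$ is norm-continuous (being a contraction) and $\bigcup_i \CA_i$ is norm-dense in $\CA$, we conclude $\CP = \id_\CA$. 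Thus $\CA$ is an upper bound for the chain in $\mathcal{F}$, and Zorn's lemma produces the required maximal element.

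There is no real obstacle here; the only subtlety is the chain step, which reduces to the trivial observation that ucp maps are norm-continuous and that $M$-tightness is preserved under taking norm closures of directed unions of subalgebras in $\mathcal{F}$.
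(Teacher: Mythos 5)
Your proof is correct and takes essentially the same approach as the paper, which simply notes that $M\subset\CB$ is itself $M$-tight and invokes Zorn's lemma. Your verification of the chain condition (passing tightness to the norm closure of a directed union via norm-continuity of ucp maps) supplies exactly the details the paper leaves implicit.
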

\begin{proof}
Note that $M\subset\CB$ is $M$-tight. Then we can apply Zorn's Lemma.
\end{proof}

Here are some examples of $M$-tight inclusions.
\begin{example}
The von Neumann algebra $M$ is a maximal C$^*$-subalgebra of $B(L^2(M))$ such that its inclusion into $B(L^2(M))$ is $M$-tight.
\end{example}
\begin{proof}
Assume that $\CA\subset B(L^2(M))$ is $M$-tight. Then by Proposition \ref{M' cap B}, we have $M'=\CA'$ within $B(L^2(M))$, which is equivalent to $\CA''=M$. Moreover, since $M\subset\CA$, we must have $\CA=M$.
\end{proof}

The following example presents the rigidity of noncommutative topological boundaries from the perspective of tight inclusions, which is a noncommutative analogue of \cite[ Example 2.9]{HK24}.
\begin{example}
Let $\CA$ be an $M$-boundary and view it as a subalgebra of $\partial_F M$, then $\CA\subset\partial_F M$ is $M$-tight.
\end{example}
\begin{proof}
Fix $\CP\in\UCP_M(\CA,\partial_F M)$. Only need to prove $\CP=\id_\CA$. Since $\partial_F M$ is injective, there exists a ucp map $\CP_0:\partial_F M\to\partial_F M$ with $\CP_0|_\CA=\CP$.
$$
\begin{tikzcd}
\partial_F M \arrow[rd, "\mathcal{P}_0", dashed]                         &           \\
\CA \arrow[u, "\id_\CA", hook] \arrow[r, "\CP"] & \partial_F M
\end{tikzcd}$$
Hence $\CP_0|_M=\CP|_M=\id_M$. By \cite[Lemma 3.7]{Ham79}, we must have $\CP_0=\id_{\partial_F M}$ and $\CP=\CP_0|_\CA=\id_\CA$.
\end{proof}

The following example provides an equivalent definition of $\varphi$-proximality using the approach of tight inclusions.
\begin{example}
Let $\varphi\in\CS_\tau(B(L^2(M)))$ be a normal regular strongly generating hyperstate and $(\CB_\varphi,\zeta)$ be the $\varphi$-Poisson boundary. Then a C$^*$-inclusion $M\subset\CA$ is $\varphi$-proximal if and only if we have $\CA\subset\CB_\varphi$ as a C$^*$-subalgebra and $\CA\subset\CB_\varphi$ is $M$-tight.
\end{example}
\begin{proof}
Assume that $M\subset\CA$ is $\varphi$-proximal. Let $\Phi:\CA\to\CB_\varphi$ be the unique $M$-bimodular ucp map from $\CA$ to $\CB_\varphi$. Then by Proposition \ref{prox unique} (i), $\Phi$ is a faithful $\ast$-homomorphism. Hence we have $\CA\cong\Phi(\CA)\subset\CB_\varphi$ as a C$^*$-subalgebra. Moreover, by Proposition \ref{prox unique} (ii), $\CA\subset\CB_\varphi$ is $M$-tight.

Assume that $\CA\subset\CB_\varphi$ is $M$-tight C$^*$-inclusion. Then $\UCP_M(\CA,\CB_\varphi)=\{\id_\CA\}$. By Definition \ref{def prox}, $M\subset\CA$ is $\varphi$-proximal.
\end{proof}

\begin{example}
Let $\Gamma\car N$ be a $\Gamma$-W$^*$-dynamic system. Assume that $N_0\subset N$ is a $\Gamma$-invariant C$^*$-subalgebra such that $N_0\subset N$ is $\Gamma$-tight. Then $\mathrm{C}^*(L(\Gamma),N_0)\subset\Gamma\ltimes N$ is $L(\Gamma)$-tight.
\end{example}
\begin{proof}
Let $\mathcal{N}_0=\mathrm{C}^*(L(\Gamma),N_0)$. Fix an $L(\Gamma)$-bimodular ucp map $\Phi:\mathcal{N}_0\to\Gamma\ltimes N$. Only need to prove $\Phi=\id_{\mathcal{N}_0}$.

Let $E:\Gamma\ltimes N\to N$ be the canonical $\Gamma$-equivariant conditional expectation. Then $E\circ\Phi|_{N_0}\in\UCP_\Gamma(N_0,N)$. Since $N_0\subset N$ is $\Gamma$-tight, we must have $E\circ\Phi|_{N_0}=\id_{N_0}$. By Lemma \ref{ucp homomorphism}, we have that $\Phi({N_0})$ is contained in the multiplicative domain of $E$, which is $N$. Hence we have $\Phi|_{N_0}=E\circ\Phi|_{N_0}=\id_{N_0}$. Moreover, since $\Phi|_{L(\Gamma)}=\id_{L(\Gamma)}$, by Lemma \ref{ucp on A_i to homomorphism}, we have $\Phi=\id_{\mathcal{N}_0}$, which finishes the proof.
\end{proof}

Following \cite[Definition 2.14]{HK24}, a $\Gamma$-C$^*$-algebra $B$ is \textbf{$\Gamma$-weakly Zimmer amenable} if for any other $\Gamma$-C$^*$-algebra $A$, one has $\UCP_\Gamma(A,B)\not=\emptyset$. We have the following definition as a noncommutative analogue of weakly Zimmer amenability.
\begin{definition}
For a C$^*$-inclusion $M\subset\CB$, we say that $\CB$ is \textbf{$M$-weakly Zimmer amenable} if for any other C$^*$-inclusion $M\subset\CA$, one has 
$$\UCP_M(\CA,\CB)\not=\emptyset.$$
\end{definition}

The following proposition shows that injectivity (or amenability for von Neumann algebras) is stronger than weak Zimmer amenability. However, under certain conditions, they can be equivalent, as seen in Corollary \ref{weak amenable = amenable}.
\begin{proposition}\label{inj weakly amenable}
For any C$^*$-inclusion $M\subset\CB$ with $\CB$ injective, $\CB$ is $M$-weakly Zimmer amenable.
\end{proposition}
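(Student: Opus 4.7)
The plan is to construct an $M$-bimodular ucp map from $\CA$ to $\CB$ by first using injectivity to get any ucp extension of the inclusion $M \hookrightarrow \CB$, then showing that this extension is automatically $M$-bimodular.

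First I would view $M$ as an operator subsystem of $\CA$ via the inclusion $M \subset \CA$, and consider the ucp map $\iota : M \to \CB$ given by the inclusion $M \hookrightarrow \CB$. Since $\CB$ is injective as a C$^*$-algebra (hence as an operator system), the Arveson-type extension property for injective targets provides a ucp extension $\CP : \CA \to \CB$ with $\CP|_M = \iota = \id_M$ (viewed in $\CB$).

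Next I would observe that $\CP|_M = \id_M$ is in particular a $*$-homomorphism on $M$. By Choi's multiplicative domain theorem (the same result \cite[Theorem 3.1]{Ch74} already invoked in Lemma \ref{ucp on A_i to homomorphism}), $M$ is contained in the multiplicative domain of $\CP$. Consequently, for any $m \in M$ and $x \in \CA$ we have $\CP(mx) = \CP(m)\CP(x) = m\CP(x)$ and $\CP(xm) = \CP(x)m$, so $\CP$ is $M$-bimodular. Therefore $\CP \in \UCP_M(\CA,\CB)$, which shows $\UCP_M(\CA,\CB) \neq \emptyset$ and finishes the proof.

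There is no real obstacle here: the argument is essentially a one-step application of injectivity followed by a standard multiplicative-domain observation. The only thing worth flagging is that one should make sure the extension $\CP$ of the identity inclusion $M \hookrightarrow \CB$ indeed literally restricts to the identity of $M$ (not merely to some other embedding), which is immediate from the injective extension property once $M$ is regarded as a common operator subsystem of both $\CA$ and $\CB$.
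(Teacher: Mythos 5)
Your proof is correct and follows essentially the same route as the paper: extend the inclusion $M\hookrightarrow\CB$ to a ucp map $\CP:\CA\to\CB$ using injectivity of $\CB$ as an operator system. The only difference is that you explicitly justify, via Choi's multiplicative domain theorem, why $\CP|_M=\id_M$ forces $M$-bimodularity — a step the paper leaves implicit — which is a welcome clarification but not a different argument.
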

\begin{proof}
Fix a C$^*$-inclusion $M\subset\CA$. Only need to prove $\UCP_M(\CA,\CB)\not=\emptyset$. Since $\CB$ is injective as an operator system, there exists a ucp map $\CP:\CA\to\CB$ such that $\CP|_M=\id_M$.
$$\begin{tikzcd}
\mathcal{A} \arrow[rd, "\mathcal{P}", dashed]                         &           \\
M \arrow[u, "\id_M", hook] \arrow[r, "\id_M"] & \CB
\end{tikzcd}$$
Hence 
$$\CP\in\UCP_M(\CA,\CB)\not=\emptyset.$$
\end{proof}

The following propositions are the noncommutative analogue of \cite[Proposition 2.15 and 3.6]{HK24}
\begin{proposition}\label{weakly amenable and FB}
For a C$^*$-inclusion $M\subset\CB$, $\CB$ is $M$-weakly Zimmer amenable if and only if $\partial_F M\subset\CB$ as a sub-operator system that contains $M$.
\end{proposition}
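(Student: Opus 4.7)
The plan is to handle the two implications separately, each one using one of the two defining properties of the injective envelope $\partial_F M = I(M)$: injectivity for the direction ``$\Leftarrow$'' and essentiality for the direction ``$\Rightarrow$''.

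For the ``$\Leftarrow$'' direction, I would suppose $\partial_F M$ sits inside $\CB$ via an $M$-fixing completely isometric embedding $\iota:\partial_F M\hookrightarrow \CB$. Given an arbitrary C$^*$-inclusion $M\subset\CA$, injectivity of $\partial_F M$ (as a C$^*$-algebra, hence as an operator system) lets me extend $\id_M:M\to\partial_F M$ to a ucp map $\CP:\CA\to\partial_F M$. Since $\CP|_M=\id_M$, the Choi multiplicative-domain argument forces $M$ into the multiplicative domain of $\CP$, so $\CP$ is $M$-bimodular. Then $\iota\circ\CP\in\UCP_M(\CA,\CB)$, witnessing weak Zimmer amenability.

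For the ``$\Rightarrow$'' direction, I would apply the hypothesis of $M$-weakly Zimmer amenability to the specific C$^*$-inclusion $M\subset\partial_F M$, producing some $\CP\in\UCP_M(\partial_F M,\CB)$. The key point is that $M\subset\partial_F M=I(M)$ is essential by construction of the injective envelope, so by condition (iii) of Theorem \ref{eq def of boundary} applied with $\CA=\partial_F M$, the map $\CP$ is automatically completely isometric. Its image $\CP(\partial_F M)$ is then a sub-operator system of $\CB$ that is $M$-bimodularly completely isometric to $\partial_F M$ and contains $\CP(M)=M$.

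I do not anticipate any genuine obstacle: both directions reduce to the two universal properties of $I(M)$ (injectivity and essentiality), together with the standard upgrade from ``fixes $M$'' to ``$M$-bimodular'' via Choi's multiplicative-domain theorem. The only mildly subtle point is to recognize that one should plug $\CA=\partial_F M$ itself into the definition of weak Zimmer amenability to extract the embedding.
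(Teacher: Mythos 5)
Your proposal is correct and follows essentially the same route as the paper: the forward direction uses weak Zimmer amenability applied to $M\subset\partial_F M$ together with essentiality of the injective envelope to get a complete isometry, and the converse uses injectivity of $\partial_F M$ to extend $\id_M$ (the paper packages this as Proposition \ref{inj weakly amenable}). Your explicit appeal to Choi's multiplicative-domain theorem to upgrade ``fixes $M$'' to ``$M$-bimodular'' is a detail the paper leaves implicit, but the argument is the same.
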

\begin{proof}
Assume that $\CB$ is $M$-weakly Zimmer amenable. Then there exists a $\CP\in\UCP_M(\partial_F M,\CB)$. Since $M\subset\partial_F M$ is essential, we have that $\CP$ is completely isometric. Hence $\partial_F M\cong \CP(\partial_F M)\subset \CB$ as a sub-operator system that contains $M$.

Assume that $M\subset\partial_F M\subset\CB$. Then for any other C$^*$-inclusion $M\subset\CA$, since $\partial_F M$ is injective, by Proposition \ref{inj weakly amenable}, we have 
$$\emptyset\not=\UCP_M(\CA,\partial_F M)\subset\UCP_M(\CA,\CB).$$ 
Therefore, $\UCP_M(\CA,\CB)\not=\emptyset$ and $\CB$ is $M$-weakly Zimmer amenable.
\end{proof}

\begin{proposition}
Let $\CA\subset\CB$ be an $M$-tight inclusion. If $\CB$ is $M$-weakly Zimmer amenable, then $\CA$ is an $M$-boundary.
\end{proposition}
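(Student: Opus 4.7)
The plan is to verify condition (iii) of Theorem \ref{eq def of boundary}, i.e., that for any C$^*$-inclusion $M\subset\mathcal{C}$ and any $\mathcal{P}\in\UCP_M(\CA,\mathcal{C})$, the map $\mathcal{P}$ is completely isometric. The proof is a one-line composition argument combining the two hypotheses.

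First, given such $M\subset\mathcal{C}$ and $\mathcal{P}\in\UCP_M(\CA,\mathcal{C})$, apply $M$-weak Zimmer amenability of $\CB$ to the inclusion $M\subset\mathcal{C}$: this produces some $\mathcal{Q}\in\UCP_M(\mathcal{C},\CB)$, which is nonempty by hypothesis. Composing, we obtain $\mathcal{Q}\circ\mathcal{P}\in\UCP_M(\CA,\CB)$.

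Next, invoke $M$-tightness of $\CA\subset\CB$, which by definition gives $\UCP_M(\CA,\CB)=\{\id_\CA\}$, and hence $\mathcal{Q}\circ\mathcal{P}=\id_\CA$. Therefore, for every $n\geq 1$ and every $T\in M_n(\CA)$,
$$\|T\|=\|(\mathcal{Q}\circ\mathcal{P})(T)\|\leq\|\mathcal{P}(T)\|\leq\|T\|,$$
exactly as in the string of inequalities (\ref{completely isometric}) in the proof of Theorem \ref{eq def of boundary}. Consequently $\mathcal{P}$ is completely isometric, which verifies condition (iii) and shows that $\CA$ is a $M$-boundary.

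There is essentially no obstacle here; the content of the statement is that $M$-tightness inside a weakly Zimmer amenable extension forces the boundary property by a pure diagram chase, and the two hypotheses fit together with no further input needed.
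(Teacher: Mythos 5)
Your proof is correct, and it takes a genuinely different route from the paper's. You verify condition (iii) of Theorem \ref{eq def of boundary} directly: given any $\CP\in\UCP_M(\CA,\mathcal{C})$, weak Zimmer amenability of $\CB$ applied to the target $\mathcal{C}$ yields $\mathcal{Q}\in\UCP_M(\mathcal{C},\CB)$, tightness forces $\mathcal{Q}\circ\CP=\id_\CA$, and complete contractivity of $\mathcal{Q}$ then squeezes $\Vert\CP(T)\Vert=\Vert T\Vert$ on every matrix level. All the small points check out: $\mathcal{Q}\circ\CP$ is indeed $M$-bimodular (bimodularity of each factor, together with $\CP|_M=\id_M$), and $\id_\CA$ here is the inclusion $\CA\hookrightarrow\CB$, so the norm identity is legitimate. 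The paper instead routes through the Furstenberg boundary: by Proposition \ref{weakly amenable and FB}, weak Zimmer amenability of $\CB$ is equivalent to $\partial_F M\subset\CB$ as an operator system; since $\partial_F M$ is injective, Proposition \ref{inj weakly amenable} gives a nonempty $\UCP_M(\CA,\partial_F M)\subset\UCP_M(\CA,\CB)=\{\id_\CA\}$, so $\CA$ sits inside $\partial_F M$ and is therefore an $M$-boundary. Your argument is more elementary and self-contained — it needs only the raw definitions plus the equivalence of conditions in Theorem \ref{eq def of boundary} — while the paper's version yields the extra structural information that $\CA$ is concretely realized as a subalgebra of $\partial_F M$ inside $\CB$, in line with Proposition \ref{boundary is subalg of F boundary}.
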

\begin{proof}
By Proposition \ref{ucp on A_i to homomorphism}, we have $\partial_F M\subset\CB$ as an operator system. By Proposition \ref{weakly amenable and FB}, $\partial_F M$ is $M$-weakly Zimmer amenable. Hence, there exists a 
$$\CP\in\UCP_M(\CA,\partial_F M)\subset\UCP_M(\CA,\CB)=\{\id_\CA\}.$$
Hence, we have $\CP=\id_\CA$ and $\CA\subset\partial_F M$ as an inclusion between operator systems. Since $\partial_F M$ is injective as an operator system, let $E: \CB\to \partial_F M$ be the conditional expectation. Then the C$^*$-algebra structure of $\partial_F M$ can be given by $x\circ y=E(xy)$ $(x,y\in\partial_F M)$. For $x,y\in \CA$, we have $x\circ y=E(xy)=xy$. Hence $\CA\cong(\CA,\circ)\subset(\partial_F M,\circ)$ is a C$^*$-inclusion. Therefore, $\CA$ is a subalgebra of $\partial_F M$, i.e., an $M$-boundary.
\end{proof}

Let $M\subset\CB$ be a C$^*$-inclusion (resp. W$^*$-inclusion). For C$^*$-subalgebras $\CA,\mathcal{C}\subset\CB$ that contain $M$, we write $\CB=\CA\vee\mathcal{C}$ if $\CB$ is the C$^*$-algebra (resp. von Neumann algebra) generated by $\CA$ and $\mathcal{C}$.

The following definition is the noncommutative analogue of \textbf{$\Gamma$-co-tight inclusions} in \cite[Definition 4.1]{HK24}.

\begin{definition}
We say that an $M$-inclusion $\CA\subset\CB$ is \textbf{$M$-co-tight} if there exists a C$^*$-algebra $\mathcal{C}$ with $M\subset\mathcal{C}\subset\CB$ such that $\mathcal{C}\subset\CB$ is $M$-tight and $\CB=\CA\vee\mathcal{C}$.
\end{definition}

The following propositions show the rigidity of co-tight inclusions, which extend \cite[Proposition 4.4, Lemma 4.5 and 4.6]{HK24} to the noncommutative setting.
\begin{proposition}
Let $M\subset\CB$ be a C$^*$-inclusion (resp. W$^*$-inclusion) and $\CA\subset\CB$ be an $M$-co-tight inclusion. Then for any (resp. normal) ucp map $\CP:\CB\to\CB$ with $\CP|_\CA=\id_\CA$, we must have $\CP=\id_\CB$.
\end{proposition}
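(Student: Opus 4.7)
The plan is to unpack the definition of $M$-co-tightness and then reduce everything to the two lemmas about multiplicative domains that have already appeared in the paper (Lemma \ref{ucp homomorphism} and Lemma \ref{ucp on A_i to homomorphism}).

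First I would invoke the hypothesis to produce an intermediate C$^*$-algebra $\mathcal{C}$ with $M\subset \mathcal{C}\subset\CB$ such that $\mathcal{C}\subset\CB$ is $M$-tight and $\CB=\CA\vee\mathcal{C}$. Next I would observe that since $M\subset\CA$ and $\CP|_{\CA}=\id_{\CA}$, one has $\CP|_{M}=\id_{M}$; by the standard multiplicative domain theorem (Choi--Effros), $M$ sits inside the multiplicative domain of $\CP$, so $\CP$ is automatically $M$-bimodular.

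With $M$-bimodularity in hand, the restriction $\CP|_{\mathcal{C}}$ lies in $\UCP_M(\mathcal{C},\CB)$. By the $M$-tightness of $\mathcal{C}\subset\CB$ this set equals $\{\id_{\mathcal{C}}\}$, so $\CP|_{\mathcal{C}}=\id_{\mathcal{C}}$. Thus $\CP$ agrees with $\id_{\CB}$ on both $\CA$ and $\mathcal{C}$. In the C$^*$-inclusion case the C$^*$-algebra generated by $\CA\cup\mathcal{C}$ is all of $\CB$ by hypothesis, so Lemma \ref{ucp on A_i to homomorphism}(2) directly gives $\CP=\id_{\CB}$.

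For the W$^*$-inclusion case (where $\CP$ is assumed normal and $\CB=\CA\vee\mathcal{C}$ is only the von Neumann algebra generated by $\CA$ and $\mathcal{C}$), the same application of Lemma \ref{ucp on A_i to homomorphism}(2) still yields $\CP=\id$ on the C$^*$-subalgebra $\mathrm{C}^*(\CA,\mathcal{C})\subset\CB$. Then normality of $\CP$ extends this equality to the weak-operator closure of $\mathrm{C}^*(\CA,\mathcal{C})$, which is $\CB$, finishing the argument. There is no genuine obstacle here; the only subtlety is checking that $M$-bimodularity of $\CP$ is automatic from $\CP|_{M}=\id_M$, which is why invoking the multiplicative domain explicitly is essential rather than a side remark.
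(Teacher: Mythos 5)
Your proof is correct and follows essentially the same route as the paper's: extract the tight intermediate algebra $\mathcal{C}$, use tightness to get $\CP|_{\mathcal{C}}=\id_{\mathcal{C}}$, and conclude from $\CB=\CA\vee\mathcal{C}$. The paper's version is terser and leaves implicit both the multiplicative-domain argument for $M$-bimodularity and the normality/density step in the W$^*$ case, which you rightly spell out.
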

\begin{proof}
Let $\mathcal{C}\subset\CB$ be an $M$-tight inclusion with $\CB=\CA\vee\mathcal{C}$. Then we must have $\CP|_\mathcal{C}=\id_\mathcal{C}$. Moreover, since $\CB=\CA\vee\mathcal{C}$ and $\CP|_\CA=\id_\CA$, we must have $\CP=\id_\CB$.
\end{proof}

\begin{proposition}\label{cotight UCP}
Let $M\subset\CB$ be a C$^*$-inclusion and $\CA\subset\CB$ be an $M$-co-tight inclusion. If there exists a ucp map $\CP\in\UCP_M(\CB,\CA)$, we must have $\CB=\CA$.
\end{proposition}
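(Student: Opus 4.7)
The plan is to exploit the tightness of the intermediate subalgebra $\mathcal{C}$ directly: any $M$-bimodular ucp map out of $\mathcal{C}$ into $\CB$ is forced to be the identity, and we will arrange $\CP$ so that this conclusion applies to it.

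First I would fix a $\mathcal{C}$ witnessing the $M$-co-tightness of $\CA \subset \CB$, so that $M \subset \mathcal{C} \subset \CB$, the inclusion $\mathcal{C} \subset \CB$ is $M$-tight, and $\CB = \CA \vee \mathcal{C}$. Then given $\CP \in \UCP_M(\CB,\CA)$, I would post-compose with the inclusion $\iota: \CA \hookrightarrow \CB$ to obtain an $M$-bimodular ucp map $\iota \circ \CP : \CB \to \CB$, and restrict it to $\mathcal{C}$. This restriction lies in $\UCP_M(\mathcal{C},\CB)$, which by the $M$-tightness of $\mathcal{C} \subset \CB$ equals $\{\id_\mathcal{C}\}$. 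Hence $\CP(c) = c$ for every $c \in \mathcal{C}$.

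Since $\CP$ takes values in $\CA$, this forces $\mathcal{C} \subset \CA$. Combined with $\CB = \CA \vee \mathcal{C}$, this gives $\CB = \CA$, as desired.

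There is essentially no obstacle here: the argument is a one-line application of $M$-tightness after the correct reformulation, and the main thing to be careful about is to track that post-composition with the inclusion does not destroy $M$-bimodularity (which it plainly does not, since both $\CP$ and $\iota$ are $M$-bimodular).
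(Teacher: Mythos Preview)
Your proof is correct and essentially identical to the paper's: both restrict $\CP$ to $\mathcal{C}$, use $M$-tightness of $\mathcal{C}\subset\CB$ to conclude $\CP|_{\mathcal{C}}=\id_{\mathcal{C}}$, hence $\mathcal{C}\subset\CA$, and then $\CB=\CA\vee\mathcal{C}=\CA$. The only difference is that you make the composition with the inclusion $\iota:\CA\hookrightarrow\CB$ explicit, which the paper leaves implicit.
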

\begin{proof}
Let $\mathcal{C}\subset\CB$ be an $M$-tight inclusion with $\CB=\CA\vee\mathcal{C}$. Then we must have $\CP|_\mathcal{C}=\id_\mathcal{C}$ and $\mathcal{C}\subset \CA$. Therefore, $\CB=\CA\vee\mathcal{C}=\CA$.
\end{proof}

\begin{corollary}
Let $M\subset\CB$ be a C$^*$-inclusion and $\CA\subset\CB$ be an $M$-co-tight inclusion. Then for any $M$-weakly Zimmer amenable $\mathcal{C}$ with $\CA\subset\mathcal{C}\subset\CB$, we must have $\mathcal{C}=\CB$.
\end{corollary}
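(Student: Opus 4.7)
The plan is to reduce the corollary directly to Proposition \ref{cotight UCP} by showing two things: first, that the hypothesis ``$\CA \subset \CB$ is $M$-co-tight'' passes up to ``$\mathcal{C} \subset \CB$ is $M$-co-tight''; and second, that $M$-weak Zimmer amenability of $\mathcal{C}$ supplies an element of $\UCP_M(\CB,\mathcal{C})$.

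For the first step, unfold the co-tightness hypothesis: there is some $M \subset \mathcal{D} \subset \CB$ with $\mathcal{D} \subset \CB$ being $M$-tight and $\CB = \CA \vee \mathcal{D}$. Since $\CA \subset \mathcal{C}$, one has
\[
\CB \;=\; \CA \vee \mathcal{D} \;\subset\; \mathcal{C} \vee \mathcal{D} \;\subset\; \CB,
\]
so $\CB = \mathcal{C} \vee \mathcal{D}$. The same witness $\mathcal{D}$ remains $M$-tight in $\CB$, hence $\mathcal{C} \subset \CB$ is $M$-co-tight. This is the only non-formal observation and it is essentially immediate.

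For the second step, apply the definition of $M$-weakly Zimmer amenability of $\mathcal{C}$ to the inclusion $M \subset \CB$ to obtain some $\CP \in \UCP_M(\CB, \mathcal{C})$. Then Proposition \ref{cotight UCP}, applied to the $M$-co-tight inclusion $\mathcal{C} \subset \CB$ together with this $\CP$, forces $\CB = \mathcal{C}$.

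I do not anticipate an obstacle: the whole point of having developed Proposition \ref{cotight UCP} and the passage $\CA \vee \mathcal{D} = \mathcal{C} \vee \mathcal{D}$ is that weak Zimmer amenability of an intermediate $\mathcal{C}$ provides exactly the retraction $\CB \to \mathcal{C}$ that co-tightness is designed to rule out. The only mild care needed is to note explicitly that co-tightness is preserved when passing from a subalgebra to a larger intermediate subalgebra, which is why the argument works verbatim with $\mathcal{C}$ in the role of $\CA$.
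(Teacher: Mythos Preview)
Your proof is correct and follows exactly the paper's approach: pass co-tightness from $\CA\subset\CB$ to $\mathcal{C}\subset\CB$, use weak Zimmer amenability to obtain a map in $\UCP_M(\CB,\mathcal{C})$, and apply Proposition~\ref{cotight UCP}. You have simply spelled out the first step (via $\CB=\CA\vee\mathcal{D}\subset\mathcal{C}\vee\mathcal{D}\subset\CB$) more explicitly than the paper, which asserts it without detail.
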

\begin{proof}
Since $\CA\subset\CB$ is $M$-co-tight, we know that $\mathcal{C}\subset\CB$ is also $M$-co-tight. Moreover, since $\mathcal{C}$ is $M$-weakly Zimmer amenable, there exists a ucp map $\CP\in\UCP_M(\CB,\mathcal{C})$. By Proposition \ref{cotight UCP}, we have $\mathcal{C}=\CB$.
\end{proof}

\section{Amenable invariant random intermediate subalgebras of crossed products}\label{IRA}
In this section, as an application of noncommutative topological boundaries, we generalize the main results of \cite{AHO23}.

 Let $\sigma:\Gamma \car (A,\tau_A)$ be a trace preserving action of the countable discrete group $\Gamma$ on a separable tracial von Neumann algebra $(A,\tau_A)$. Let $H=\ell^2(\Gamma,L^2(A))=\ell^2(\Gamma)\otimes L^2(A)$. Then we have $L(\Gamma)\otimes 1\subset B(H)=B(\ell^2(\Gamma))\otimes B(L^2(A))$. Define a representation $\pi:A\to B(H)$ by 
$$ (\pi(a) \xi)(h)=\sigma^{-1}_{h}(a)\xi(h)\in L^2(A)$$
for $\xi\in H=\ell^2(\Gamma,L^2(A))$, $a\in A$ and $h\in \Gamma$. 

Let
$$(M,\tau)=\Gamma \ltimes A=\lag L(\Gamma)\otimes 1, \pi(A)\rag \subset B(H)$$
be the crossed product of $\sigma:\Gamma \car (A,\tau_A)$, together with the canonical trace. Then $L^2(M)=H$, with cyclic vector $\xi_\tau=\mathds{1}_e\otimes\hat{1}_A$ and $(\lambda_g \otimes 1) \pi(a) \xi_\tau=  \mathds{1}_g\otimes\hat{a}$ for $g\in \Gamma$ and $a\in A$. And the modular conjugation operator $J$ of $(M,\tau)$ satisfies that for $g\in\Gamma$ and $a\in A$,
$$J(\lambda_g\otimes 1) J= \rho_g\otimes u_g ,$$
$$J\pi(a)J=1\otimes (J_A a J_A).$$
Here $\rho$ is the right regular representation of $\Gamma$, $J_A$ is the modular conjunction operator of $(A,\tau_A)$, and $u_g\in \mathcal{U}(L^2(A))$ is given by $u_g(\hat{a})=\widehat{\sigma_g(a)}$, which satisfies $u_g a u_g^*=\sigma_g(a)$ for any $a\in A$.

We have $\ell^\infty(\Gamma)\otimes 1\subset B(H)=B(\ell^2(\Gamma))\otimes B(L^2(A))$. For any subgroup $\Lambda<\Gamma$, we also have $\ell^\infty(\Gamma/\Lambda)\subset\ell^\infty(\Gamma)$ as a von Neumann subalgebra by identifying $f\in \ell^\infty(\Gamma/\Lambda)$ with $\sum_{C\in \Gamma/\Lambda} f(C)\cdot \mathds{1}_{C}\in \ell^\infty(\Gamma)$. Then we have the following lemma:

\begin{lemma}\label{crossed product as basic construction}
For any subgroup $\Lambda<\Gamma$, we have 
$$\lag M , \ell^\infty(\Gamma/\Lambda)\otimes 1\rag=\lag M,\mathds{1}_{\Lambda}\otimes 1\rag=J(\Lambda\ltimes A) J'.$$
\end{lemma}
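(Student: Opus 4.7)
The plan is to establish the two equalities in turn: the left equality will follow from a conjugation calculation exhibiting every indicator $\mathds{1}_{g\Lambda}$ inside $\lag M,\mathds{1}_\Lambda\otimes 1\rag$, and the right equality will identify $\mathds{1}_\Lambda\otimes 1$ with the Jones projection $e_{\Lambda\ltimes A}$ for the inclusion $\Lambda\ltimes A\subset M$ so that the basic construction takes over.

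For the first equality, I would start by observing that $\mathds{1}_\Lambda$, viewed as a function on $\Gamma$, is right $\Lambda$-invariant, so $\mathds{1}_\Lambda\otimes 1\in\ell^\wx(\Gamma/\Lambda)\otimes 1$, giving one inclusion. For the reverse direction, a direct computation from $\lambda_g\delta_h=\delta_{gh}$ yields
\begin{equation*}
(\lambda_g\otimes 1)(\mathds{1}_\Lambda\otimes 1)(\lambda_g\otimes 1)^* = \mathds{1}_{g\Lambda}\otimes 1, \qquad g\in\Gamma.
\end{equation*}
Since $\lambda_g\otimes 1\in M$, all indicators $\mathds{1}_{g\Lambda}\otimes 1$ lie in $\lag M,\mathds{1}_\Lambda\otimes 1\rag$; as these generate $\ell^\wx(\Gamma/\Lambda)\otimes 1$ as a von Neumann algebra, the first equality follows.

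Next I would identify the Hilbert subspace $L^2(\Lambda\ltimes A)\subset L^2(M)=\ell^2(\Gamma)\otimes L^2(A)$. Using the cyclic vector $\xi_\tau=\mathds{1}_e\otimes\hat 1_A$ and the formula $(\lambda_\lambda\otimes 1)\pi(a)\xi_\tau=\mathds{1}_\lambda\otimes\hat a$ for $\lambda\in\Lambda$ and $a\in A$, one reads off that $L^2(\Lambda\ltimes A)=\ell^2(\Lambda)\otimes L^2(A)$, whose orthogonal projection in $B(H)$ is exactly $\mathds{1}_\Lambda\otimes 1$. Hence $\mathds{1}_\Lambda\otimes 1=e_{\Lambda\ltimes A}$, and Jones' basic construction for $\Lambda\ltimes A\subset M$, as recalled at the end of Section \ref{Preliminaries}, gives
\begin{equation*}
\lag M,\mathds{1}_\Lambda\otimes 1\rag=\lag M,e_{\Lambda\ltimes A}\rag = J(\Lambda\ltimes A)'J = (J(\Lambda\ltimes A)J)',
\end{equation*}
which closes the chain of equalities.

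There is no genuine obstacle here: the content of the lemma is exactly the observation that, in the canonical standard-form representation of $M=\Gamma\ltimes A$ on $\ell^2(\Gamma)\otimes L^2(A)$, the abelian algebra $\ell^\wx(\Gamma/\Lambda)\otimes 1$ is generated together with $M$ by the single projection $\mathds{1}_\Lambda\otimes 1$, which happens to coincide with the Jones projection for the intermediate crossed product $\Lambda\ltimes A$. The only computation to check with any care is the conjugation identity $\lambda_g\mathds{1}_\Lambda\lambda_g^*=\mathds{1}_{g\Lambda}$, which is a one-line consequence of the definition of the left regular representation.
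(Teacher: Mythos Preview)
Your proposal is correct and follows essentially the same route as the paper: both arguments obtain the first equality from the conjugation identity $\lambda_g\mathds{1}_\Lambda\lambda_g^*=\mathds{1}_{g\Lambda}$, then identify $\mathds{1}_\Lambda\otimes 1$ with the Jones projection onto $\overline{(\Lambda\ltimes A)\xi_\tau}=\ell^2(\Lambda)\otimes L^2(A)$ and invoke the basic construction formula $\lag M,e_{\Lambda\ltimes A}\rag=(J(\Lambda\ltimes A)J)'$. Your write-up is in fact slightly more explicit about the two inclusions in the first equality, but there is no substantive difference.
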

\begin{proof}
Since 
$$\ell^\infty(\Gamma/\Lambda)=\lag \mathds{1}_{g\Lambda}\rag_{g\in\Gamma}=\lag \lambda_g\mathds{1}_{\Lambda}\lambda_g^*\rag_{g\in\Gamma}\subset\lag L(\Gamma),\mathds{1}_{\Lambda}\rag,$$
we have
$$\lag M , \ell^\infty(\Gamma/\Lambda)\otimes 1\rag=\lag M,\mathds{1}_{\Lambda}\otimes 1\rag=\lag M,e_{\ell^2(\Lambda)\otimes L^2(A)}\rag,$$
which is exactly the basic construction of $\Lambda\ltimes A \subset M$ because $$\ell^2(\Lambda)\otimes L^2(A)=\overline{\mathrm{span}\{\mathds{1}_g\otimes \hat{a}=(\lambda_g\otimes 1)\pi(a)\xi_\tau| g\in \Lambda, a\in A\}}=\overline{(\Lambda\ltimes A) \xi_\tau}.$$ Therefore,
$$\lag M , \ell^\infty(\Gamma/\Lambda)\otimes 1\rag=J(\Lambda\ltimes A) J'.$$
\end{proof}
Let $\mathrm{Sub}(\Gamma)$ be the collection of subgroups of $\Gamma$, endowed with the Chabauty topology, and $\SA(\Gamma\ltimes A)$ be the collection of subalgebras of $\Gamma$, endowed with the Effros-Mar\'echal topology. Then the following corollary generalizes \cite[Proposition 4.1]{AHO23}.
\begin{corollary}
The map $L_A:\mathrm{Sub}(\Gamma)\to \SA(\Gamma\ltimes A )$ defined by 
$$L_A(\Lambda)=\Lambda\ltimes A \ (\Lambda\in \mathrm{Sub}(\Gamma))$$
is continuous.
\end{corollary}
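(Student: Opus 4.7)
The plan is to verify the two inclusions $\Lambda \ltimes A \subset \liminf_n (\Lambda_n \ltimes A)$ and $\limsup_n (\Lambda_n \ltimes A) \subset \Lambda \ltimes A$ directly whenever $\Lambda_n \to \Lambda$ in the Chabauty topology. The combinatorial input is that Chabauty convergence gives, for each $g \in \Gamma$, that eventually $g \in \Lambda_n$ iff $g \in \Lambda$.

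For the $\liminf$ inclusion the generators of $\Lambda \ltimes A$ manifestly lie in $\liminf$: $\pi(a) \in \Lambda_n \ltimes A$ for every $n$ and every $a \in A$, while for $g \in \Lambda$ the element $\lambda_g \otimes 1$ belongs to $\Lambda_n \ltimes A$ for all large $n$. Since $\liminf_n(\Lambda_n \ltimes A)$ is a von Neumann algebra (standard, see \cite{HW98}), it contains the algebra generated by these elements, which is $\Lambda \ltimes A$.

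For the $\limsup$ inclusion, I would exploit the Fourier decomposition in the crossed product. Let $E_A : M \to \pi(A)$ be the canonical trace-preserving (hence normal and faithful) conditional expectation, and for each $g \in \Gamma$ define $\phi_g : M \to \pi(A)$ by $\phi_g(x) = E_A\bigl((\lambda_{g^{-1}} \otimes 1)\, x\bigr)$. Each $\phi_g$ is wo-continuous on bounded sets, as the composition of the normal $E_A$ with a fixed bounded left multiplication. From the Fourier expansion $x = \sum_{g \in \Gamma} (\lambda_g \otimes 1)\pi(x_g)$ in $M$ one reads off that $x \in \Lambda \ltimes A$ iff $\phi_g(x) = 0$ for every $g \notin \Lambda$. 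Now if $x$ is a wo-limit point of a bounded sequence $(x_n) \in \ell^\infty(\N, \Lambda_n \ltimes A)$, say $x_{n_k} \to x$ in wo, then for any $g \notin \Lambda$, Chabauty convergence gives $g \notin \Lambda_{n_k}$ for all large $k$, so $\phi_g(x_{n_k}) = 0$ eventually; wo-continuity on bounded sets then yields $\phi_g(x) = 0$. Hence every wo-limit point lies in $\Lambda \ltimes A$, and since $\Lambda \ltimes A$ is already a von Neumann algebra, so does the algebra they generate, namely $\limsup_n(\Lambda_n \ltimes A)$.

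The only non-routine ingredient is the wo-continuity of the Fourier coefficient maps $\phi_g$ on bounded sets, but this is immediate from the normality of $E_A$. Everything else is bookkeeping from the Chabauty characterization and the fact that liminf and limsup in the Effros-Mar\'echal topology are von Neumann algebras; one could alternatively package this via Lemma \ref{crossed product as basic construction} together with the homeomorphism property of the commutant operation, but the direct Fourier approach avoids having to argue continuity of the passage from a projection $\mathds{1}_\Lambda \otimes 1$ to the algebra $\langle M, \mathds{1}_\Lambda \otimes 1 \rangle$ it generates with $M$.
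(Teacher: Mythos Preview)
Your argument is correct. The $\liminf$ direction is identical to the paper's. For the $\limsup$ direction you take a genuinely different route: instead of passing to commutants via $N\mapsto JN'J$ and invoking Lemma~\ref{crossed product as basic construction} to reduce $\limsup_n(\Lambda_n\ltimes A)\subset\Lambda\ltimes A$ to the dual $\liminf$ statement $\langle M,\mathds{1}_\Lambda\otimes 1\rangle\subset\liminf_n\langle M,\mathds{1}_{\Lambda_n}\otimes 1\rangle$, you test membership in $\Lambda\ltimes A$ directly via the Fourier-coefficient maps $\phi_g=E_A((\lambda_{g^{-1}}\otimes1)\,\cdot\,)$. This is a more elementary and self-contained argument, relying only on normality of $E_A$ (hence $\sigma$-weak continuity of $\phi_g$, which on bounded sets is the same as wo-continuity) and the standard characterization of $\Lambda\ltimes A$ by vanishing of Fourier coefficients off $\Lambda$. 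The paper's approach, by contrast, packages the $\limsup$ step conceptually through the commutant homeomorphism and the basic-construction identification, which is cleaner if one already has Lemma~\ref{crossed product as basic construction} at hand but requires that extra infrastructure. One small remark: your closing comment slightly overstates the burden of the paper's method---once $M$ and $\mathds{1}_\Lambda\otimes1$ lie in $\liminf_n\langle M,\mathds{1}_{\Lambda_n}\otimes1\rangle$, the generated algebra is automatically contained there since $\liminf$ is a von Neumann algebra, exactly parallel to your own $\liminf$ step; no separate ``continuity of generation'' is needed. Also note that your passage to subsequences for wo-limit points is justified here because $H$ is separable (so wo is metrizable on bounded sets).
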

\begin{proof}
Assume that $\Lambda_n\to\Lambda$ within $\mathrm{Sub}(\Gamma)$. Only need to prove 
$$\limsup_n \Lambda_n\ltimes A \subset \Lambda\ltimes A \subset\liminf_n \Lambda_n\ltimes A .$$ 

For any $g\in\Lambda$, since $g\in \Lambda_n$ eventually, i.e., $\exists n_0\in \N$ with $g\in\Lambda_n$ for any $n\geq n_0$, we have $\lambda_g\in\liminf L(\Lambda_n)\subset\liminf_n \Lambda_n\ltimes A $. Hence $L(\Lambda)\subset\liminf_n \Lambda_n\ltimes A $. Moreover, since $A=\lim_n A\subset\liminf_n \Lambda_n\ltimes A $, we have
$$\Lambda\ltimes A =\lag L(\Lambda),A\rag\subset \liminf_n \Lambda_n\ltimes A .$$

By \cite[Theorem 3.5 and Corollary 3.6]{HW98}, $N\mapsto JN'J$ is a homeomorphism on $\mathrm{vN}(H)$, hence to prove $\limsup_n \Lambda_n\ltimes A \subset \Lambda\ltimes A $ is equivalent to prove 
$$J(\Lambda\ltimes A) 'J\subset\liminf_nJ(\Lambda_n\ltimes A) 'J.$$
And by Lemma \ref{crossed product as basic construction}, this is equivalent to prove
$$\lag M, \mathds{1}_\Lambda\otimes 1\rag\subset\liminf_n \lag M, \mathds{1}_{\Lambda_n}\otimes 1\rag.$$
Obviously, $M=\lim_n M\subset\liminf_n \lag M, \mathds{1}_{\Lambda_n}\otimes 1\rag$. Since $\Lambda_n\to \Lambda$, we have 
$$\mathds{1}_{\Lambda}\otimes 1=\mathrm{so}^*-\lim_n \mathds{1}_{\Lambda_n}\otimes 1\in \liminf_n \lag M, \mathds{1}_{\Lambda_n}\otimes 1\rag.$$
Therefore, $\lag M, \mathds{1}_\Lambda\otimes 1\rag\subset\liminf_n \lag M, \mathds{1}_{\Lambda_n}\otimes 1\rag$, which finishes the proof.
\end{proof}

Let 
$$\mathscr{A}=\lag M, \ell^\infty(\Gamma)\otimes 1\rag\subset B(H).$$
Then by taking $\Lambda=\{e\}$ in Lemma \ref{crossed product as basic construction}, we have
$$\mathscr{A}=\lag M, \ell^\infty(\Gamma)\otimes 1\rag=J\pi(A)J'=(1\otimes (J_A AJ_A))'=B(\ell^2(\Gamma))\otimes A.$$

\begin{theorem}\label{Gamma A boundary}
With the notations above, let $X$ be a $\Gamma$-boundary and take $x\in X $. Let $\CB_X^x \subset \mathscr{A}$ be the C$^*$-algebra generated by $M$ and $P_{\delta_x}(C(X))\otimes 1\subset \ell^\infty(\Gamma)\otimes 1$. Then we have
\begin{itemize}
    \item [(1)] Any $\CP\in \UCP_M(\CB_X^x ,\mathscr{A})$ is completely isometric;
    \item[(2)] $\CB_X^x $ does not depend on the choice of $x\in X$, i.e., for any $y\in X$, define $\CB_X^y =\mathrm{C}^*(M,P_{\delta_y}(C(X))\otimes 1)$ as above, then there exists an $M$-bimodular isomorphism $\Phi: \CB_X^x \to \CB_X^y $. Therefore, we can simply denote $\CB_X^x$ by $\CB_X$;
    \item[(3)] When $(A,\tau_A)$ is amenable, $\CB_X $ is an $M$-boundary. In particular, by taking $A=\C$, we have that $\CB_X=\mathrm{C}^*(L(\Gamma),C(X))\subset B(\ell^2(\Gamma))$ is an $L(\Gamma)$-boundary.
\end{itemize} 
\end{theorem}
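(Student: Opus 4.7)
The plan is to establish the three parts in sequence, with (1) carrying the main technical weight and (2), (3) following as relatively direct consequences.

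For (1), fix $\CP \in \UCP_M(\CB_X^x, \mathscr{A})$. Since $M$ lies in the multiplicative domain of $\CP$, one has $\CP|_M = \id_M$, and as $\CB_X^x$ is generated by $M$ and $P_{\delta_x}(C(X)) \otimes 1$, by Lemma \ref{ucp on A_i to homomorphism} it suffices to show $\CP|_{P_{\delta_x}(C(X)) \otimes 1}$ is a completely isometric $*$-homomorphism. Two facts constrain this restriction: (a) $P_{\delta_x}(C(X)) \otimes 1$ commutes with $\pi(A) \subset M$, so its image lies in $\pi(A)' \cap \mathscr{A}$, and (b) the $(L(\Gamma) \otimes 1)$-bimodularity makes it $\Gamma$-equivariant. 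Compressing $\CP(P_{\delta_x}(f) \otimes 1)$ by the rank-one projections $p_g \otimes 1$, the $\pi(A)$-bimodularity via $\pi(u)$, $u \in \mathcal{U}(A)$, forces the diagonal entries to lie in $p_g \otimes Z(A)$; the $\Gamma$-equivariance then expresses them as $p_g \otimes \phi(g^{-1}\cdot f)$ for a single ucp map $\phi: C(X) \to Z(A)$. Composing with any state $\chi$ on $Z(A)$ yields the scalar Poisson transform $P_{\mu_\chi}: C(X) \to \ell^\wx(\Gamma)$ associated to the measure $\mu_\chi = \chi \circ \phi$, which is completely isometric at every matrix level because $X$ is a $\Gamma$-boundary. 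Sandwiching $\|f\|_n = \|P_{\mu_\chi}(f)\|_n \leq \|\CP(P_{\delta_x}(f) \otimes 1)\|_n \leq \|f\|_n$ gives complete isometry on $P_{\delta_x}(C(X)) \otimes 1$; since this domain is commutative, the restriction is a $*$-homomorphism, and Lemma \ref{ucp on A_i to homomorphism}(1) upgrades $\CP$ to a $*$-homomorphism on all of $\CB_X^x$ that is completely isometric.

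For (2), the orbital case $y = gx$ is explicit: the inner automorphism $\AD(J(\lambda_g \otimes 1)J) = \AD(\rho_g \otimes u_g)$ of $B(H)$ lies in $M'$ (hence fixes $M$ pointwise) and satisfies $(\rho_g \otimes u_g)(P_{\delta_x}(f) \otimes 1)(\rho_g \otimes u_g)^{-1} = P_{\delta_{gx}}(f) \otimes 1$, yielding an explicit $M$-bimodular $*$-isomorphism $\CB_X^x \to \CB_X^{gx}$. For general $y \in X$, I would combine the $\Gamma$-minimality of $X$ with the universal characterization arising from (1): the structural data of $\CB_X^x$ --- namely $M$ together with a $\Gamma$-equivariant completely isometric embedding of $C(X)$ whose image commutes with $\pi(A)$ --- depends only on the $\Gamma$-space $X$ and not on the basepoint, giving the required $M$-bimodular $*$-isomorphism.

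For (3), when $(A, \tau_A)$ is amenable, $\mathscr{A} = B(\ell^2(\Gamma)) \bar{\otimes} A$ is an injective von Neumann algebra, so there exists a ucp conditional expectation $E_0: \BL = B(H) \to \mathscr{A}$, automatically $M$-bimodular since $M \subset \mathscr{A}$ lies in its multiplicative domain. Given any $\CP \in \UCP_M(\CB_X, \BL)$, the composition $E_0 \circ \CP \in \UCP_M(\CB_X, \mathscr{A})$ is completely isometric by (1), and the sandwich $\|T\|_n = \|E_0 \circ \CP(T)\|_n \leq \|\CP(T)\|_n \leq \|T\|_n$ at all matrix levels forces $\CP$ to be completely isometric, establishing condition (ii) of Theorem \ref{eq def of boundary}, so $\CB_X$ is an $M$-boundary. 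The main obstacle lies in part (1): carefully extracting the $Z(A)$-valued Poisson-transform structure from the compressions (which, for nontrivial $Z(A)$, requires the auxiliary state $\chi$ on $Z(A)$ rather than a direct scalar argument), together with the $*$-homomorphism upgrade via commutativity of $P_{\delta_x}(C(X)) \otimes 1$ and Lemma \ref{ucp on A_i to homomorphism}.
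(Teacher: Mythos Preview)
Your argument for part (3) matches the paper's and is correct. Part (2) for $y=gx$ is also fine, though the general case is left too vague.

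The real problem is in part (1). You correctly establish that $\CP|_{P_{\delta_x}(C(X))\otimes 1}$ is completely isometric: the diagonal compression followed by a state $\chi$ is precisely the conditional expectation $E\otimes\tilde\chi:\mathscr{A}\to\ell^\infty(\Gamma)\otimes1$ used in the paper (where the paper simply takes $\tilde\chi=\tau_A$), and the sandwich $\|f\|=\|P_{\mu_\chi}(f)\|\leq\|\CP(P_{\delta_x}(f)\otimes1)\|\leq\|f\|$ is valid at every matrix level. But the next step, ``since this domain is commutative, the restriction is a $*$-homomorphism,'' is false. A completely isometric ucp map out of a commutative C$^*$-algebra need not be multiplicative: for instance $\phi:\C^2\to M_3$, $(a,b)\mapsto\mathrm{diag}(a,b,\tfrac{a+b}{2})$ is unital, completely positive, and completely isometric (since $\|\tfrac{A+B}{2}\|\leq\max(\|A\|,\|B\|)$ at every matrix level), yet $\phi((1,0))\phi((0,1))=\mathrm{diag}(0,0,\tfrac14)\neq 0=\phi((1,0)\cdot(0,1))$. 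Without multiplicativity on $C(X)$ you cannot invoke Lemma~\ref{ucp on A_i to homomorphism}, and complete isometry on the two generating subalgebras does not by itself propagate to complete isometry on the C$^*$-algebra they generate.

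The paper's proof circumvents this by using not the \emph{complete-isometry} characterization of a $\Gamma$-boundary but its \emph{proximality}. After obtaining $E_0\circ\CP|_{C(X)}=P_\nu\otimes 1$ (your $\nu=\mu_\chi$), one chooses a net $(g_i)$ with $g_i\nu\to\delta_x$, forms the ucp limit $\CP_\omega=\lim_\omega\mathrm{Ad}(\rho_{g_i}\otimes u_{g_i})\in\UCP_M(\mathscr{A},\mathscr{A})$, and computes $E_0\circ\CP_\omega\circ\CP|_{C(X)}=P_{\delta_x}\otimes 1$. The point is that $P_{\delta_x}$, being evaluation at a point, \emph{is} a $*$-homomorphism; Lemma~\ref{ucp homomorphism} then forces $\CP_\omega\circ\CP|_{C(X)}$ to be one as well, and Lemma~\ref{ucp on A_i to homomorphism} yields $\CP_\omega\circ\CP=\id_{\CB_X^x}$, exhibiting a ucp left inverse to $\CP$. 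This ``push to a Dirac mass'' is the missing idea in your approach; it is also what makes part (2) work for arbitrary $y\in X$ (one pushes $\delta_x$ to $\delta_y$ via a net $h_j x\to y$ in exactly the same way), which your universal-property sketch leaves unexplained.
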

\begin{proof}
(1) Fix a $\CP\in \UCP_M(\CB_X^x ,\mathscr{A})$. Let $E:B(\ell^2(\Gamma))\to \ell^\infty(\Gamma)$ be the canonical conditional expectation. Define a conditional expectation 
$$E_0=E\otimes \tau_A:\mathscr{A}=B(\ell^2(\Gamma))\otimes A\to \ell^\infty(\Gamma)\otimes 1.$$
Then $E_0$ is normal faithful and $\Gamma$-equivariant since both $E$ and $\tau_A$ are.

Obviously, $P_{\delta_x}:C(X)\to \ell^\infty(\Gamma)$ is a $*$-homomorphism. Moreover, since $X$ is $\Gamma$-minimal, $P_{\delta_x}$ is faithful. We can identify $C(X)$ with $P_{\delta_x}(C(X))\otimes 1$. Since 
$$E_0\circ\CP|_{C(X)}:C(X)\to \ell^\infty(\Gamma)\otimes 1\cong \ell^\infty(\Gamma)$$
is a $\Gamma$-equivariant ucp map, there exists a $\nu\in\prob(X)$ with 
$$E_0\circ\CP(f) =P_\nu(f)\otimes 1\in \ell^\infty(\Gamma)\otimes 1$$ 
for any $f\in C(X)$. Since $X$ is a $\Gamma$-boundary, there exists a net $(g_i)_I\subset \Gamma$ with $g_i\nu \xrightarrow{\mathrm{w}^*} \delta_x$. Then we have $\mathrm{Ad}(\rho_{g_i})\circ P_\nu=P_{g_i\nu}\to P_{\delta_x}$ pointwise weakly.

Since $\mathscr{A}=B(\ell^2(\Gamma))\otimes A$ is $\mathrm{Ad}(\rho_g\otimes u_g)$-invariant for any $g\in \Gamma$, take $\omega\in \beta I\setminus I$, we can define 
$$\CP_\omega=\lim_\omega \mathrm{Ad}(\rho_{g_i}\otimes u_{g_i})\in \UCP_M(\mathscr{A},\mathscr{A}).$$
Note that both $(E\otimes \id) $ and $ (\id\otimes \tau_A)$ commutes with $\mathrm{Ad}(\rho_{g}\otimes u_{g})$ (here uses the fact that $\sigma$ is $\tau_A$-preserving), hence so is $E_0$. We have 
\begin{equation}\label{E_0PP=P}
\begin{aligned}
&E_0\circ \CP_\omega\circ \CP|_{C(X)}\\
=&\lim_\omega E_0\circ \mathrm{Ad}(\rho_{g_i}\otimes u_{g_i})\circ \CP|_{C(X)} &(\mbox{$E_0$ is normal})\\
=&\lim_\omega \mathrm{Ad}(\rho_{g_i}\otimes u_{g_i})\circ E_0\circ \CP|_{C(X)} &(\mbox{$E_0$ commutes with $\mathrm{Ad}(\rho_{g}\otimes u_{g})$})\\
=&\lim_\omega \mathrm{Ad}(\rho_{g_i}\otimes u_{g_i})\circ (P_\nu\otimes1) &(\mbox{$E_0\circ \CP|_{C(X)}=P_\nu\otimes 1$})\\
=&\lim_\omega (\mathrm{Ad}(\rho_{g_i})\circ P_\nu)\otimes 1 &\\
=& P_{\delta_x}\otimes 1, &(\mbox{$\mathrm{Ad}(\rho_{g_i})\circ P_\nu=P_{g_i\nu}\to P_{\delta_x}$})
\end{aligned}
\end{equation}
 which is a $*$-homomorphism. By Lemma \ref{ucp homomorphism}, we know that $\CP_\omega\circ\CP|_{C(X)}$ is a $*$-homomorphism and $\CP_\omega\circ\CP(C(X))$ is contained in the multiplicative domain of $E_0$, which is $\ell^\infty(\Gamma)\otimes1$ (the multiplicative domain of a faithful conditional expectation onto a C$^*$-algebra is exactly the range domain). By (\ref{E_0PP=P}), we have 
\begin{equation}\label{P omega P=P delta}
\CP_\omega\circ\CP|_{C(X)}=E_0\circ\CP_\omega\circ\CP|_{C(X)}=P_{\delta_x}\otimes 1:C(X)\to \ell^\infty(\Gamma)\otimes 1,
\end{equation}
 which is a $*$-homomorphism. Now we have $\CP_\omega\circ\CP|_M=\id_M=\id_{\CB_X^x} |_M$ and $\CP_\omega\circ\CP|_{C(X)}=P_{\delta_x}\otimes 1=\id_{\CB_X^x} |_{C(X)}$. By Lemma \ref{ucp on A_i to homomorphism}, we must have $\CP_\omega\circ\CP=\id_{\CB_X^x} $. Therefore, $\CP$ is completely isometric.
\\

(2) Now let us prove that $\CB_X^x $ does not depend on the choice of $x$. Fix another $y\in X$, let $\CB_X^y \subset \mathscr{A}$ be the C$^*$-subalgebra generated by $M$ and $P_{\delta_y}(C(X))\otimes 1$. Since $X$ is a $\Gamma$-boundary, there exists a net $(h_j)_J\subset \Gamma$ such that $h_j \delta_x\xrightarrow{\mathrm{w}^*} \delta_y$. Then we have $\mathrm{Ad}(\rho_{h_j})\circ P_{\delta_x}=P_{g_i\delta_x}\to P_{\delta_y}$. Take $\omega'\in \beta J\setminus J$, let 
 
 $$\CP_{\omega'}=\lim_{\omega'} \mathrm{Ad}(\rho_{h_j}\otimes u_{h_j})\in \UCP_M(\mathscr{A},\mathscr{A}).$$ 
 By the same discussion in (\ref{E_0PP=P}), we have 
 $$E_0\circ\CP_{\omega'}\circ (P_{\delta_x}\otimes 1)=\CP_{\omega'}\circ E_0\circ (P_{\delta_x}\otimes 1)=P_{\delta_y}\otimes 1:C(X)\to \ell^\infty(\Gamma)\otimes 1.$$ 
 
 Again, by Lemma \ref{ucp homomorphism}, $\CP_{\omega'}\circ (P_{\delta_x}\otimes 1)(C(X))$ is in the multiplicative domain of $E_0$, which is $\ell^\infty(\Gamma)\otimes 1$. Therefore, $\CP_{\omega'}\circ (P_{\delta_x}\otimes 1)=P_{\delta_y}\otimes 1$ and $$\CP_{\omega'}|_{P_{\delta_x}(C(X))\otimes 1}=(P_{\delta_y}\otimes 1)\circ (P_{\delta_x}\otimes 1)^{-1}: P_{\delta_x}(C(X))\otimes 1\to P_{\delta_y}(C(X))\otimes 1$$ 
 is a $*$-isomorphism. So both $P_{\delta_x}(C(X))\otimes 1$ and $M$ are contained in the multiplicative domain of $\CP_{\omega'}$. Hence $\CP_{\omega'}|_{\CB_X^x }$ is a $*$-homomorphism. Since $\CP_{\omega'}(M)=M$ and $\CP_{\omega'}(P_{\delta_x}(C(X))\otimes 1)=P_{\delta_y}(C(X))\otimes 1$, we have $\CP_{\omega'}|_{\CB_X^x} :\CB_X^x \to\CB_X^y $ is surjective. Moreover, by (1), $\CP_{\omega'}|_{\CB_X^x} \in \UCP_{M}(\CB_X^x ,\mathscr{A})$ is completely isometric. Therefore, $\CP_{\omega'}|_{\CB_X^x }:\CB_X^x \to\CB_X^y $ is an $M$-bimodular $*$-isomorphism.
\\

 (3) Assume that $(A,\tau_A)$ is amenable. Then $\mathscr{A}=B(\ell^2(\Gamma))\otimes A$ is also amenable. There exists a conditional expectation $E_{\mathscr{A}}: B(H)\to \mathscr{A}$. For any ucp map $\CP_0\in \UCP_M(\CB_X ,B(H))$, by (1), we have that $E_{\mathscr{A}}\circ \CP_0\in \UCP_M(\CB_X ,\mathscr{A})$ is completely isometric, which induces that $\CP_0$ must be completely isometric. By the arbitrariness of $\CP_0$, we know that $\CB_X $ is an $M$-boundary.
\end{proof}

From now on, we assume that $(A,\tau_A)$ is amenable. The following theorem generalizes \cite[Theorem A]{AHO23}.

\begin{theorem}\label{max amenable}
Assume that $N\in\SA(\Gamma\ltimes A)$ is an amenable $\Gamma$-invariant subalgebra with $A\subset N\subset \Gamma\ltimes A$. Then we must have 
$$N\subset \rad(\Gamma)\ltimes A .$$
In particular, when $\rad(\Gamma)=\{e\}$, $A$ is a maximal invariant amenable subalgebra.
\end{theorem}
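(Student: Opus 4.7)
The plan is to reduce the conclusion $N\subset \rad(\Gamma)\ltimes A$ to showing $\mathds{1}_{\rad(\Gamma)}\otimes 1 \in \lag M, e_N\rag$; by Lemma~\ref{crossed product as basic construction} this is equivalent to $\lag M, e_{\rad(\Gamma)\ltimes A}\rag \subset \lag M, e_N\rag$ and hence to the desired containment. The key tool will be the $M$-boundary $\CB_{\partial_F\Gamma}$ from Theorem~\ref{Gamma A boundary}, applied alongside the amenability and $\Gamma$-invariance of $N$.

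First I gather structural facts on $\lag M, e_N\rag$. Since $A\subset N$, $e_N$ commutes with $JAJ = 1\otimes J_AAJ_A$, whence $e_N\in (JAJ)' = \mathscr{A}$ and $\lag M, e_N\rag\subset\mathscr{A}$. Amenability of $N$ makes $\lag M,e_N\rag = JN'J$ injective. The $\Gamma$-invariance $\lambda_g N\lambda_g^* = N$ makes $\lag M, e_N\rag$ invariant under $\mathrm{Ad}(\lambda_g)$; moreover, since $\rho_g\otimes u_g = J\lambda_gJ\in M'\subset N'$, a direct commutator calculation (exploiting $\Gamma$-invariance of $N$) shows $N'$ is also $\mathrm{Ad}(\lambda_g)$-invariant, and hence $\lag M, e_N\rag$ is $\mathrm{Ad}(\rho_g\otimes u_g)$-invariant as well. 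By injectivity, fix a ucp projection $E_N:\mathscr{A}\to\lag M, e_N\rag$; it is automatically $M$-bimodular.

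Next I mimic the averaging argument from the proof of Theorem~\ref{Gamma A boundary} with $X = \partial_F\Gamma$. With $E_0 = E\otimes \tau_A:\mathscr{A}\to \ell^\wx(\Gamma)\otimes 1$, the map $E_0\circ E_N|_{C(X)\otimes 1}:C(X)\to \ell^\wx(\Gamma)\otimes 1$ is $\Gamma$-equivariant ucp, hence equals $P_\nu\otimes 1$ for some $\nu\in\prob(X)$. Choose $g_i\in\Gamma$ with $g_i\nu\to\delta_x$ by the boundary property, and set $\CP_\omega = \lim_\omega\mathrm{Ad}(\rho_{g_i}\otimes u_{g_i})$; by the invariance above, $\CP_\omega$ preserves $\lag M, e_N\rag$. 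The chain of identities \eqref{E_0PP=P} yields $E_0\circ\CP_\omega\circ E_N|_{C(X)\otimes 1} = P_{\delta_x}\otimes 1$, a $*$-homomorphism, and the multiplicative-domain argument for $E_0$ (whose multiplicative domain is $\ell^\wx(\Gamma)\otimes 1$) upgrades this to $\CP_\omega\circ E_N|_{C(X)\otimes 1} = P_{\delta_x}\otimes 1$. Since $P_{\delta_x}$ is faithful by $\Gamma$-minimality of $\partial_F\Gamma$, Lemma~\ref{ucp homomorphism} makes $E_N|_{C(X)\otimes 1}$ a $*$-homomorphism, and combined with $E_N|_M = \id_M$, Lemma~\ref{ucp on A_i to homomorphism}(1) yields that $E_N|_{\CB_{\partial_F\Gamma}}$ itself is a $*$-homomorphism.

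Finally, applying Lemma~\ref{ucp on A_i to homomorphism}(2) to $\CP = \CP_\omega\circ E_N|_{\CB_{\partial_F\Gamma}}$ and the inclusion $\Phi = \iota:\CB_{\partial_F\Gamma}\hookrightarrow\mathscr{A}$---which agree on $M$ and on $C(X)\otimes 1$---gives $\CP_\omega\circ E_N|_{\CB_{\partial_F\Gamma}} = \id_{\CB_{\partial_F\Gamma}}$. From this left-inverse identity together with the $*$-homomorphism property, I expect to conclude $E_N|_{\CB_{\partial_F\Gamma}} = \id$, i.e.\ $\CB_{\partial_F\Gamma}\subset\lag M, e_N\rag$. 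Passing to the von Neumann closure inside $\lag M, e_N\rag$ and using $\rad(\Gamma) = \mathrm{Ker}(\Gamma\car\partial_F\Gamma)$ together with the faithful action of $\Gamma/\rad(\Gamma)$ on $\partial_F\Gamma$, the weakly closed $\Gamma$-invariant subalgebra generated by $P_{\delta_x}(C(\partial_F\Gamma))$ in $\ell^\wx(\Gamma)$ is exactly $\ell^\wx(\Gamma/\rad(\Gamma))$, so $\ell^\wx(\Gamma/\rad(\Gamma))\otimes 1\subset\lag M, e_N\rag$; in particular $\mathds{1}_{\rad(\Gamma)}\otimes 1\in\lag M, e_N\rag$, completing the proof.

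The principal technical obstacle is this final paragraph: extracting the pointwise equality $E_N|_{\CB_{\partial_F\Gamma}} = \id$ from the left-inverse relation plus $*$-homomorphism property, and then upgrading the containment $\CB_{\partial_F\Gamma}\subset\lag M, e_N\rag$ of $\mathrm{C}^*$-subalgebras to the containment $\ell^\wx(\Gamma/\rad(\Gamma))\otimes 1\subset \lag M, e_N\rag$ of von Neumann subalgebras, the latter relying crucially on the faithfulness of $\Gamma/\rad(\Gamma)\car\partial_F\Gamma$ to guarantee weak density.
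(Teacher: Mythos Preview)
Your approach is essentially the paper's: both produce an $M$-bimodular ucp map from $\CB_{\partial_F\Gamma}$ into $JN'J=\lag M,e_N\rag$ (the paper via Proposition~\ref{exists completely isometry}, you via restricting a conditional expectation $E_N:\mathscr{A}\to\lag M,e_N\rag$), then run the averaging argument from Theorem~\ref{Gamma A boundary} together with the $\mathrm{Ad}(\rho_g\otimes u_g)$-invariance of $JN'J$ to place $P_{\delta_y}(C(\partial_F\Gamma))\otimes 1$ inside $JN'J$, and finally invoke Lemma~\ref{crossed product as basic construction}.

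Your flagged obstacles are both non-issues. For the first: once $\CP_\omega\circ E_N|_{\CB}=\id_\CB$, observe that $\CP_\omega$ maps $\lag M,e_N\rag$ into itself (it is a pointwise weak limit of automorphisms preserving this weakly closed algebra), so $\CB=\CP_\omega(E_N(\CB))\subset\lag M,e_N\rag$ directly; the equality $E_N|_\CB=\id$ then follows since $E_N$ fixes $\lag M,e_N\rag$ pointwise. Your detour through ``$E_N|_{C(X)\otimes 1}$ is a $*$-homomorphism'' is unnecessary and not justified as written---Lemma~\ref{ucp homomorphism} requires the \emph{outer} map to be faithful, and $\CP_\omega$ need not be. For the second obstacle, the paper simply varies $y$ over all of $\partial_F\Gamma$ and uses $\rad(\Gamma)=\bigcap_y\mathrm{Stab}(y)$; your variant (fix $x$, then use $\mathrm{Ad}(\rho_g\otimes u_g)$-invariance of $JN'J$ to get $P_{\delta_{gx}}(C(\partial_F\Gamma))\otimes 1\subset JN'J$ for all $g$) also works, because $\bigcap_g\mathrm{Stab}(gx)$ is the normal core of $\mathrm{Stab}(x)$, which equals $\rad(\Gamma)$ by density of the orbit $\Gamma x$ and continuity of the action.
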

\begin{proof}
For simplicity, we still let $M=\Gamma\ltimes A$. First, let us prove that $\ell^\infty(\Gamma/\mathrm{Rad}(\Gamma))\otimes 1\subset JN'J$. Take $x\in \FB$ and let $\CB\subset \mathscr{A}=B(\ell^2(\Gamma))\otimes A$ be the C$^*$-algebra generated by $M$ and $P_{\delta_x}(C(\FB))\otimes 1$. Then by Theorem \ref{Gamma A boundary}, $\CB$ is an $M$-boundary. 

Since $N$ is amenable, so is $JN'J\subset JA'J=\mathscr{A}$. Also note that $M\subset JN'J$. Since $\CB$ is an $M$-boundary, by Proposition \ref{exists completely isometry}, there exists a completely isometric $\CP\in \UCP_M(\CB, JN'J)$. 

Note that $\CP\in \UCP_M(\CB,\mathscr{A})$. Following the same discussion (\ref{E_0PP=P}) and (\ref{P omega P=P delta}) in the proof of Theorem \ref{Gamma A boundary}, we know that for any $y\in \CFB$, $P_{\delta_y}\otimes 1: \CFB \to \ell^\infty(\Gamma)\otimes 1\subset \mathscr{A}$ can be written as a pointwise weak limit point of $\{\mathrm{Ad}(\rho_g\otimes u_g)\circ \CP|_{\CFB}| g\in \Gamma\}$. Since $ N $ is $\Gamma$-invariant, we know that $JN'J$ is $J\Gamma J$-invariant, i.e., $\mathrm{Ad}(\rho_g\otimes u_g)$-invariant for any $g\in \Gamma$. Moreover, since $\CP(\CFB)\subset JN'J$, we have $\mathrm{Ad}(\rho_g\otimes u_g)\circ \CP(\CFB)\subset JN'J$ for any $g\in \Gamma$. Therefore, for any $y\in \FB$,
$$P_{\delta_y}(\CFB)\otimes 1\subset \overline{\mathrm{span}\{\mathrm{Ad}(\rho_g\otimes u_g)\circ \CP(\CFB)|g\in \Gamma\}}^{\mathrm{wo}}\subset JN'J.$$
Note that the von Neumann algebra generated by $P_{\delta_y}(\CFB)$ is $\ell^\infty(\Gamma/\mathrm{Stab}(y))\subset \ell^\infty(\Gamma)$. Since $\rad(\Gamma)=\mathrm{Ker}(\Gamma\car\partial_F\Gamma)=\cap_{y\in \FB}\mathrm{Stab}(y)$ \cite[Proposition 7]{Fur03}, the von Neumann algebra generated by $\{\ell^\infty(\Gamma/\mathrm{Stab}(y))|y\in \FB\}$ is 
$$\ell^\infty(\Gamma/\cap_{y\in \FB}\mathrm{Stab}(y))=\ell^\infty(\Gamma/\mathrm{Rad}(\Gamma))\subset \ell^\infty(\Gamma).$$
Therefore, we have $\ell^\infty(\Gamma/\mathrm{Rad}(\Gamma))\otimes1\subset JN'J$.

Now we have $\lag M, \ell^\infty(\Gamma/\mathrm{Rad}(\Gamma))\otimes 1\rag \subset JN'J $. Therefore, 
$$N\subset (J\lag M, \ell^\infty(\Gamma/\mathrm{Rad}(\Gamma))\otimes 1\rag J)'.$$

By taking $\Lambda=\mathrm{Rad}(\Gamma)$ in Lemma \ref{crossed product as basic construction}, we have
$$N\subset(J\lag M , \ell^\infty(\Gamma/\mathrm{Rad}(\Gamma))\otimes 1\rag J)'= \mathrm{Rad}(\Gamma)\ltimes A.$$
\end{proof}

As shown in Proposition \ref{inj weakly amenable}, injectivity (amenability) is generally stronger than weakly Zimmer amenability. However, for a $J\Gamma J$-invariant intermediate subalgebra $\mathscr{N}$ of the inclusion $M \subset \mathscr{A}$, these two properties are equivalent:
\begin{corollary}\label{weak amenable = amenable}
Following the notations above, let $H=\ell^2(\Gamma)\otimes L^2(A)$, $M=\Gamma\ltimes A$ and $\mathscr{A}=B(\ell^2(\Gamma))\otimes A$. For $\mathscr{N}\in\mathrm{vN}(H)$ with $M\subset\mathscr{N}\subset\mathscr{A}$, assume that $\mathscr{N}$ is $J\Gamma J$-invariant and $M$-weakly Zimmer amenable. Then $\mathscr{N}$ is necessarily amenable.
\end{corollary}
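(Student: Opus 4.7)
The plan is to mimic the proof of Theorem~\ref{max amenable} with $\mathscr{N}$ itself taking the role there played by $JN'J$, replacing the use of Proposition~\ref{exists completely isometry} (which needed injectivity of $JN'J$) by weak Zimmer amenability, and then to conclude by sandwiching the commutant $\mathscr{N}'$ between two finite injective algebras.

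First, I would take $X=\FB$; by Theorem~\ref{Gamma A boundary}(3), amenability of $A$ makes $\CB_X\subset \mathscr{A}$ an $M$-boundary. Weak Zimmer amenability of $\mathscr{N}$ then supplies a map $\CP\in\UCP_M(\CB_X,\mathscr{N})$, and in particular $\CP(P_{\delta_x}(C(\FB))\otimes 1)\subset\mathscr{N}$. At this point I would run verbatim the averaging argument from the proof of Theorem~\ref{Gamma A boundary}(1) (reused at the start of the proof of Theorem~\ref{max amenable}): composing with $E_0=E\otimes\tau_A:\mathscr{A}\to\ell^\wx(\Gamma)\otimes 1$ yields a $\Gamma$-equivariant ucp map $C(\FB)\to\ell^\wx(\Gamma)$, which must be of the form $P_\nu\otimes 1$ for some $\nu\in\prob(\FB)$; for each $y\in\FB$, selecting a net $(g_i)\subset\Gamma$ with $g_i\nu\to\delta_y$ and taking an ultrafilter limit of $\mathrm{Ad}(\rho_{g_i}\otimes u_{g_i})\circ\CP|_{C(\FB)}$ produces $P_{\delta_y}(C(\FB))\otimes 1$ as a pointwise weak limit. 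Since $\mathscr{N}$ is weakly closed and $J\Gamma J$-invariant with $\CP(C(\FB))\subset\mathscr{N}$, all these limits remain inside $\mathscr{N}$, and letting $y$ range over $\FB$ and taking the generated von Neumann algebra gives $\ell^\wx(\Gamma/\rad(\Gamma))\otimes 1\subset\mathscr{N}$.

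Combining with $M\subset\mathscr{N}$ and applying Lemma~\ref{crossed product as basic construction} to $\Lambda=\rad(\Gamma)$ produces
$$J(\rad(\Gamma)\ltimes A)J' =\lag M,\ell^\wx(\Gamma/\rad(\Gamma))\otimes 1\rag\subset\mathscr{N}\subset\mathscr{A}=J\pi(A)J',$$
so taking commutants in $B(H)$ yields
$$J\pi(A)J\subset \mathscr{N}'\subset J(\rad(\Gamma)\ltimes A)J.$$
The rightmost algebra is finite and injective, since $\rad(\Gamma)\ltimes A$ carries a faithful normal trace and is amenable (both $\rad(\Gamma)$ and $A$ are), and conjugation by $J$ preserves both properties. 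The trace-preserving normal conditional expectation from this finite injective algebra onto its von Neumann subalgebra $\mathscr{N}'$ then forces injectivity of $\mathscr{N}'$, so $\mathscr{N}=\mathscr{N}''$ is amenable.

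The main obstacle is the second step, namely verifying that the ultrafilter weak limits appearing in the averaging procedure actually land inside $\mathscr{N}$. This is precisely where the two hypotheses of the corollary enter: the $J\Gamma J$-invariance keeps each iterate $\mathrm{Ad}(\rho_g\otimes u_g)\circ\CP$ inside $\mathscr{N}$, and the fact that $\mathscr{N}$ is a von Neumann algebra (so weakly closed) absorbs the limits.
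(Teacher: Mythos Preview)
Your proof is correct and follows essentially the same approach as the paper's: both arguments produce a map $\CP\in\UCP_M(\CB_{\partial_F\Gamma},\mathscr{N})$, run the averaging argument from Theorem~\ref{Gamma A boundary} and Theorem~\ref{max amenable} (using $J\Gamma J$-invariance and weak closedness of $\mathscr{N}$) to force $\ell^\wx(\Gamma/\rad(\Gamma))\otimes 1\subset\mathscr{N}$, and then pass to commutants and conclude via amenability of $\rad(\Gamma)\ltimes A$. The only cosmetic difference is that the paper obtains $\CP$ by first invoking Proposition~\ref{weakly amenable and FB} to embed $\partial_F M$ into $\mathscr{N}$ and then restricting to the subalgebra $\CB_{\partial_F\Gamma}\subset\partial_F M$, whereas you appeal directly to the definition of $M$-weak Zimmer amenability; your route is slightly more direct and avoids needing the complete isometry of $\CP$, which is in any case unused in the averaging step.
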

\begin{proof}
Follow the notations in the proof of Theorem \ref{max amenable}. By Proposition \ref{weakly amenable and FB}, we have $\partial_F M\subset\mathscr{N}$ as a sub-operator system. Since $\CB=\mathrm{C}^*(M,C(\partial_F\Gamma))$ is an $M$-boundary, by viewing $\CB$ as a subalgebra of $\partial_F M$, there exists a completely isometric $\CP\in\UCP_M(\CB,\mathscr{N})$. Then by the same discussion in the proof of Theorem \ref{max amenable}, with $JN'J$ replaced by $\mathscr{N}$, we can show that 
$$J\mathscr{N}J'\subset\rad(\Gamma)\ltimes A.$$
Hence $J\mathscr{N}J'$ is amenable and so is $\mathscr{N}$.
\end{proof}

Let $M=\Gamma\ltimes A$ and
$$\SAAM(M)=\{N\in\SA(M)\mid \mbox{$N$ is amenable}\}.$$
Then by \cite[Proposition 4.7]{AHO23}, $\SAAM(M)$ is a closed subset of $\SA(M)$. Since $\SA(M)$ is a closed subset of $\mathrm{vN}(H)$, $\SAAM(M)$ is also a closed subset of $\mathrm{vN}(H)$. 

Let 
$$\SA(A,M)=\{N\in\SA(M)\mid A\subset N\subset M\},$$
$$\SAAM(A,M)=\{N\in\SA(A,M)\mid \mbox{$N$ is amenable}\}.$$
Then $\SAAM(A,M)=\SA(A,M)\cap\SAAM(M)\subset\mathrm{vN}(H)$ is also closed and $\Gamma$-invariant since both $\SAAM(M)$ and $\SA(A,M)$ are.

\begin{definition}\label{def IRA}
Following \cite[Definition 1.1]{AHO23}, an \textbf{invariant random subalgebra} (IRA) of $\Gamma\ltimes A$ is a $\Gamma$-invariant Borel probability measure $\mu\in\prob(\SA(\Gamma\ltimes A))$ under the $\Gamma$-conjugation action. An \textbf{invariant random intermediate subalgebra} (IRIA) of $\Gamma\ltimes A$ is an IRA $\mu$ with $\mu$-a.e. $N\in\SA(\Gamma\ltimes A)$ satisfying $A\subset N\subset \Gamma\ltimes A$. An IRIA $\mu$ is \textbf{amenable} if $\mu$-a.e. $N\in\SA(\Gamma\ltimes A)$ is amenable.
\end{definition}

Let $\theta: \mathrm{vN}(H)\to\mathrm{vN}(H)$ be $\theta(N)=JN'J$. Then by \cite[Corollary 3.6]{HW98}, $\theta$ is a homeomorphism. Let
$$ \SA(M,\mathscr{A})=\{\mathscr{N}\in\mathrm{vN}(H)\mid M\subset \mathscr{N}\subset\mathscr{A}\},$$
$$ \SAAM(M,\mathscr{A})=\{\mathscr{N}\in\SA(M,\mathscr{A})\mid \mbox{$\mathscr{N}$ is amenable}\}.$$
Since $N\in\SA(A,M)$ is amenable iff $JN'J\in\SA(M,\mathscr{A})$ is, we have $ \SAAM(M,\mathscr{A})=\theta(\SAAM(A,M))\subset\mathrm{vN}(H)$ is closed. Also, both $\SA(M,\mathscr{A})$ and $\SAAM(M,\mathscr{A})$ are $\Gamma$-invariant under the action
\begin{equation}\label{g on EA}
g\mathscr{N}=J(\lambda_g\otimes 1)J\mathscr{N}J(\lambda_g^*\otimes 1)J=(\rho_g\otimes u_g)\mathscr{N}(\rho_g^*\otimes u_g^*)\ (g\in\Gamma).
\end{equation}
And $\theta:\SAAM(A,M)\to \SAAM(M,\mathscr{A})$ is $\Gamma$-equivariant.

Fix a metrizable $\Gamma$-boundary $X$ (note that the Furstenberg boundary $\FB$ may not be metrizable \cite[Corollary 3.17]{KK14}). Let $\CB_X=\mathrm{C}^*(M,C(X))$ be the $M$-boundary as in Theorem \ref{Gamma A boundary}. Let $\mathfrak{C}(\prob(X))$ be the space of all compact convex (non-empty) subsets of $\prob(X)\cong\mathrm{State}(C(X))$. Following \cite[Subsection 4.3]{AHO23} and \cite[Section 2]{BDLW16} (where the metrizability of $X$ is necessary), take a countable dense subset $\{f_n\}$ of the unit ball of $C(X)$  and define a map $f:\mathfrak{C}(\prob(X))\to \prod_n [0,1]$ by
$$f(\mathcal{C}_0)=(f_n^+(\mathcal{C}_0)) \ (\mathcal{C}_0\in\mathfrak{C}(\prob(X))),$$
where
$$f_n^+(\mathcal{C}_0)=\sup_{\nu\in \mathcal{C}_0}\mathrm{Re}
\,\nu(f_n).$$
By the Hahn-Banach separation theorem, $f$ is injective. A topology on $\mathfrak{C}(\prob(X))$ can be induced from the product topology on $\prod_n [0,1]$. Moreover, $f$ preserves the convex structures as described in \cite[Section 2]{BDLW16}: for $\mathcal{C}_1,\mathcal{C}_2\in\mathfrak{C}(\prob(X))$ and $0<\lambda<1$,
$$\lambda\mathcal{C}_1+(1-\lambda)\mathcal{C}_2=\{\lambda\nu_1+(1-\lambda)\nu_2\mid\nu_i\in\mathcal{C}_i\ (i=1,2)\}\in\mathfrak{C}(\prob(X)).$$

Define a map $\mathcal{C}: \SAAM(M,\mathscr{A})\to\mathfrak{C}(\prob(X))$ by
$$\mathcal{C}(\mathscr{N})=\{\nu\in\prob(X)\mid \mbox{$\exists \CP\in\UCP_M(\CB_X, \mathscr{N})$ with $E_0\circ\CP|_{C(X)}=P_\nu\otimes1$}\}$$
for $\mathscr{N}\in \SAAM(M,\mathscr{A})$.

\begin{proposition}\label{semi continuous}
The map $\mathcal{C}$ is well-defined, $\Gamma$-equivariant and upper semi-continuous.
\end{proposition}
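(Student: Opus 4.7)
My plan is to verify the three claims in turn. For well-definedness I first note $\mathcal{C}(\mathscr{N})\neq\emptyset$: amenability of $\mathscr{N}$ gives injectivity, hence $M$-weak Zimmer amenability by Proposition \ref{inj weakly amenable}, so since $\CB_X$ is a $M$-boundary by Theorem \ref{Gamma A boundary}, Proposition \ref{exists completely isometry} yields some $\CP\in\UCP_M(\CB_X,\mathscr{N})$. The composition $E_0\circ\CP|_{C(X)}$ is $\Gamma$-equivariant (using $M$-bimodularity of $\CP$ and that $E_0$ commutes with $\mathrm{Ad}(\lambda_g\otimes 1)$), so it equals $P_\nu\otimes 1$ for a unique $\nu\in\prob(X)$. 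Convexity of $\mathcal{C}(\mathscr{N})$ follows from the affinity of $\CP\mapsto\nu$, and compactness from pointwise weak$^*$ compactness of $\UCP_M(\CB_X,\mathscr{N})$ (a closed subset of the weak$^*$-compact space of ucp maps $\CB_X\to B(H)$, landing in the weak$^*$-closed $\mathscr{N}$) together with the continuity of $\CP\mapsto\nu$.

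For $\Gamma$-equivariance, note $\rho_g\otimes u_g=J(\lambda_g\otimes 1)J\in M'$, so $\mathrm{Ad}(\rho_g\otimes u_g)$ is $M$-bimodular and carries $\mathscr{N}$ to $g\mathscr{N}$; if $\CP$ witnesses $\nu\in\mathcal{C}(\mathscr{N})$, then $\mathrm{Ad}(\rho_g\otimes u_g)\circ\CP$ witnesses $g\nu\in\mathcal{C}(g\mathscr{N})$, by the same computation as in the proof of Theorem \ref{Gamma A boundary}, using the commutation of $E_0$ with $\mathrm{Ad}(\rho_g\otimes u_g)$ together with $\mathrm{Ad}(\rho_g)\circ P_\nu=P_{g\nu}$. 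Applying this to $g^{-1}$ gives the reverse inclusion.

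The main step is upper semi-continuity, for which it suffices to prove the Kuratowski version: if $\mathscr{N}_k\to\mathscr{N}$ in $\mathrm{vN}(H)$ and $\nu_k\in\mathcal{C}(\mathscr{N}_k)$ with $\nu_k\to\nu$ in $\prob(X)$, then $\nu\in\mathcal{C}(\mathscr{N})$; this immediately yields upper semi-continuity of each $f_n^+\circ\mathcal{C}$. Choosing witnesses $\CP_k\in\UCP_M(\CB_X,\mathscr{N}_k)$, I use that $\CB_X$ is separable (being generated by the separable $M$ and $C(X)$) to extract, by a diagonal argument on a countable dense subset, a subsequence along which $\CP_k\to\CP$ pointwise weakly for some ucp $M$-bimodular $\CP$. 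For each $x\in\CB_X$, $\CP(x)$ is a wo-limit of the bounded sequence $(\CP_k(x))\in\ell^\wx(\N,\mathscr{N}_k)$, hence lies in $\limsup_k\mathscr{N}_k=\mathscr{N}$, so $\CP\in\UCP_M(\CB_X,\mathscr{N})$. Finally, normality of $E_0$ combined with the pointwise weak convergence $P_{\nu_k}\to P_\nu$ gives $E_0\circ\CP|_{C(X)}=P_\nu\otimes 1$, whence $\nu\in\mathcal{C}(\mathscr{N})$. The principal obstacle is exactly this step, namely simultaneously extracting a subsequence compatible with the Effros-Mar\'echal convergence of $(\mathscr{N}_k)$ and confirming that the pointwise weak limit $\CP(x)$ of the $\CP_k(x)\in\mathscr{N}_k$ genuinely lands inside $\mathscr{N}$ rather than only in the von Neumann algebra generated by arbitrary wo-limit points.
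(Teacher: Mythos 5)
Your proposal is correct and follows essentially the same route as the paper: nonemptiness of $\UCP_M(\CB_X,\mathscr{N})$ via the boundary property and injectivity of the amenable $\mathscr{N}$, equivariance via conjugation by $\rho_g\otimes u_g$, and the key point that pointwise wo-limits of the witnesses $\CP_k$ land in $\limsup_k\mathscr{N}_k=\mathscr{N}$. The only (harmless) deviations are that you phrase upper semi-continuity in its Kuratowski closed-graph form before deducing the $f_n^+$ version, and you extract the limit ucp map by a diagonal argument on a countable dense subset of $\CB_X$ where the paper takes a limit along a free ultrafilter.
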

\begin{proof}
For any $\mathscr{N}\in \SAAM(M,\mathscr{A})$, by Proposition \ref{exists completely isometry}, $\UCP_M(\CB_X, \mathscr{N})$ is non-empty and convex. Hence $\mathcal{C}(\mathscr{N})$ is also non-empty and convex. To prove $\mathcal{C}(\mathscr{N})$ is compact, only need to prove it is closed. For any sequence $(\nu_n)\subset\mathcal{C}(\mathscr{N})$ with $\nu_n\xrightarrow{\mathrm{w}^*}\nu_0\in\prob(X)$, take $\CP_n\in\UCP_M(\CB_X, \mathscr{N})$ with $E_0\circ\CP_n|_{C(X)}=P_{\nu_n}\otimes1$. Take $\omega\in\beta\N\setminus\N$, let $\CP_0=\lim_\omega \CP_n\in \UCP_M(\CB_X, \mathscr{N})$. Since $E_0$ is normal, we have 
$$E_0\circ\CP_0|_{C(X)}=\lim_\omega E_0\circ\CP_n|_{C(X)}=\lim_\omega P_{\nu_n}\otimes1=P_{\nu_0}\otimes1.$$
Hence $\nu_0\in\mathcal{C}(\mathscr{N})$ and $\mathcal{C}(\mathscr{N})$ is a compact convex subset of $\prob(X)$. Therefore, $\mathcal{C}: \SAAM(M,\mathscr{A})\to\mathfrak{C}(\prob(X))$ is well-defined.
\\

For any $\mathscr{N}\in \SAAM(M,\mathscr{A})$ and $g\in\Gamma$, we have
$$\UCP_M(\CB_X, g\mathscr{N})=\AD(\rho_g\otimes u_g)\circ\UCP_M(\CB_X, \mathscr{N}).$$
For any $\nu\in\mathcal{C}(\mathscr{N})$, take $\CP\in\UCP_M(\CB_X, \mathscr{N})$ with $E_0\circ\CP|_{C(X)}=P_{\nu}\otimes1$. Then $\CP'=\AD(\rho_g\otimes u_g)\circ\CP\in\UCP_M(\CB_X, g\mathscr{N} )$ satisfies 
$$\begin{aligned}
E_0\circ\CP'|_{C(X)}=&E_0\circ\AD(\rho_g\otimes u_g)\circ\CP|_{C(X)}\\
=&\AD(\rho_g\otimes u_g)\circ E_0\circ\CP|_{C(X)}\\
=&\AD(\rho_g\otimes u_g)\circ (P_{\nu}\otimes1)\\
=&P_{g\nu}\otimes1.    
\end{aligned}$$
Hence $g\nu\in \mathcal{C}(g\mathscr{N})$ for any $\nu\in\mathcal{C}(\mathscr{N})$. Therefore, $g\mathcal{C}(\mathscr{N})\subset\mathcal{C}(g\mathscr{N})$. For the same reason, we have $g^{-1}\mathcal{C}(g\mathscr{N})\subset\mathcal{C}(g^{-1}(g\mathscr{N}))$, i.e., $\mathcal{C}(g\mathscr{N})\subset g\mathcal{C}(\mathscr{N})$. Therefore, we have $g\mathcal{C}(\mathscr{N})=\mathcal{C}(g\mathscr{N})$ and $\mathcal{C}$ is $\Gamma$-equivariant.
\\

Now let us prove the upper semi-continuity of $\mathcal{C}$. Assume that $\TN_n\to\TN$ within $ \SAAM(M,\mathscr{A})$. Fix a $n_0\in\N$, only need to prove 
$$f_{n_0}^+(\mathcal{C}(\TN)) \geq \limsup_n f_{n_0}^+(\mathcal{C}(\TN_n)).$$
For $n\geq 1$, since $\mathcal{C}(\mathscr{N}_n)$ is compact, there exists a $\nu_n\in\mathcal{C}(\mathscr{N}_n)$ with $f_{n_0}^+(\mathcal{C}(\TN_n))=\mathrm{Re}\,\nu_n(f_{n_0})$. Take $\CP_n\in\UCP_M(\CB_X, \mathscr{N}_n )$ with $E_0\circ\CP_n|_{C(X)}=P_{\nu_n}\otimes1$. Take a subsequence $(f_{n_0}^+(\mathcal{C}(\TN_{n_k})))$ of $(f_{n_0}^+(\mathcal{C}(\TN_n)))$ with $$\limsup_n f_{n_0}^+(\mathcal{C}(\TN_n))=\lim_k f_{n_0}^+(\mathcal{C}(\TN_{n_k})).$$
Take $\omega\in\beta\N\setminus\N$. Let $\CP=\lim_{k\to\omega}\CP_{n_k}\in \UCP_M(\CB_X, \mathscr{A})$. Then for any $T\in \CB_X$, 
$$\CP(T)=\mathrm{wo}-\lim_{k\to\omega} \CP_{n_k}(T)\in\limsup_n \TN_n=\TN.$$ 
Hence $\CP\in\UCP_M(\CB_X, \mathscr{N} )$. Let $\nu\in\mathcal{C}(\mathscr{N})$ be the measure associated with $\CP$. Then we have $\nu=\mathrm{w}^*-\lim_{k\to\omega}\nu_{n_k}$ and
$$f_{n_0}^+(\mathcal{C}(\TN))\geq\mathrm{Re}\,\nu(f_{n_0})=\lim_{k\to\omega}\mathrm{Re}\,\nu_{n_k}(f_{n_0})=\lim_k f_{n_0}^+(\mathcal{C}(\TN_{n_k}))=\limsup_n f_{n_0}^+(\mathcal{C}(\TN_n)).$$
Therefore, $\mathcal{C}$ is upper semi-continuous.
\end{proof}

The following theorem generalizes \cite[Theorem B]{AHO23}.

\begin{theorem}\label{amenable IRS for crossed products}
Let $\mu\in\prob(\SA(M))$ be an amenable IRIA. Then for $\mu$-a.e. $N\in\SA(M)$, we have
$$N\subset \rad(\Gamma)\ltimes A. $$
In particular, when $\rad(\Gamma)=\{e\}$, we must have $\mu=\delta_A$.
\end{theorem}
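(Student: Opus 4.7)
The plan is to reduce the IRIA statement to the deterministic Theorem \ref{max amenable} by distilling, out of the random measure $\mu$, a single canonical $\Gamma$-invariant amenable intermediate subalgebra. First I would transfer the problem to the commutant side via the homeomorphism $\theta : N \mapsto JN'J$: the pushforward $\mu' := \theta_* \mu$ is a $\Gamma$-invariant Borel probability measure on $\SAAM(M, \mathscr{A})$ with $\mathscr{A} = B(\ell^2(\Gamma)) \otimes A$, and the target conclusion $N \subset \rad(\Gamma) \ltimes A$ becomes $\mathscr{N} \supset \lag M, \ell^\wx(\Gamma/\rad(\Gamma)) \otimes 1\rag$ for $\mu'$-a.e.\ $\mathscr{N}$.

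Next I would define the essential intersection
$$\mathscr{N}_\star := \{\, x \in \mathscr{A} : x \in \mathscr{N} \text{ for $\mu'$-a.e.\ } \mathscr{N}\,\}.$$
Separability of $H = \ell^2(\Gamma) \otimes L^2(A)$ gives closure of $\mathscr{N}_\star$ under wo-limits via countable intersections of full-measure sets, so $\mathscr{N}_\star$ is a von Neumann algebra with $M \subset \mathscr{N}_\star \subset \mathscr{A}$; the $\Gamma$-invariance of $\mu'$ under the action (\ref{g on EA}) combined with a change of variables yields $\Gamma$-invariance of $\mathscr{N}_\star$; and a countable wo-dense subset of $\mathscr{N}_\star$ shows $\mathscr{N}_\star \subset \mathscr{N}$ for $\mu'$-a.e.\ $\mathscr{N}$.

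The main technical step is to show that $\mathscr{N}_\star$ is amenable (equivalently injective). The intended approach is an averaging one: for $\mu'$-a.e.\ $\mathscr{N}$, its injectivity yields a conditional expectation $E_\mathscr{N} : B(H) \to \mathscr{N}$, which can be chosen Borel-measurably by Kuratowski--Ryll-Nardzewski. The weak integral $\bar E := \int E_\mathscr{N} \, d\mu'(\mathscr{N})$ is then $M$-bimodular ucp, and a Banach-limit symmetrization combined with the $\Gamma$-invariance of $\mu'$ should produce a conditional expectation $B(H) \to \mathscr{N}_\star$, establishing injectivity. This is the hard part: essential intersections of injective von Neumann algebras need not be injective in general, and the argument must rely essentially on the $\Gamma$-equivariant measure-theoretic structure coming from the IRIA hypothesis (one could alternatively try to avoid the intersection by averaging completely isometric embeddings $\CP_\mathscr{N} : \CB_{\partial_F \Gamma} \to \mathscr{N}$ produced by Proposition \ref{exists completely isometry}, and using Proposition \ref{semi continuous} together with $\Gamma$-invariance to locate $\delta_y \in \mathcal{C}(\mathscr{N})$ for $\mu'$-a.e.\ $\mathscr{N}$ and every $y \in \partial_F \Gamma$).

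Granting the amenability of $\mathscr{N}_\star$, the algebra $N_\star := \theta^{-1}(\mathscr{N}_\star)$ is a $\Gamma$-invariant amenable intermediate subalgebra with $A \subset N_\star \subset M$, so Theorem \ref{max amenable} applies to give $N_\star \subset \rad(\Gamma) \ltimes A$, equivalently $\mathscr{N}_\star \supset \lag M, \ell^\wx(\Gamma/\rad(\Gamma))\otimes 1\rag$. Combining this with $\mathscr{N} \supset \mathscr{N}_\star$ for $\mu'$-a.e.\ $\mathscr{N}$ yields $N \subset \rad(\Gamma) \ltimes A$ for $\mu$-a.e.\ $N$. When $\rad(\Gamma) = \{e\}$, the IRIA hypothesis $A \subset N$ forces $N = A$ almost surely, hence $\mu = \delta_A$.
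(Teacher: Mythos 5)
Your reduction scaffolding (passing to the commutant side via $\theta$, forming the essential intersection $\mathscr{N}_\star$, checking it is a $\Gamma$-invariant von Neumann algebra with $M\subset\mathscr{N}_\star\subset\mathscr{N}$ a.e., and then invoking Theorem \ref{max amenable}) is sound, but the central step --- injectivity of $\mathscr{N}_\star$ --- is not established, and the averaging argument you propose does not deliver it. The weak integral $\bar E=\int E_{\mathscr{N}}\,\d\mu'(\mathscr{N})$ (even granting a measurable selection of the $E_{\mathscr{N}}$, which is itself delicate since the ucp maps on $B(H)$ do not form a Polish space) is a ucp map that \emph{fixes} $\mathscr{N}_\star$ pointwise, but its range is essentially the join of the $\mathscr{N}$'s, not their intersection; passing to Cesàro limits of powers of $\bar E$ yields an idempotent ucp map onto the fixed-point operator system of $\bar E$, which contains $\mathscr{N}_\star$ but has no reason to equal it. There is no mechanism in your argument that cuts the range down to $\mathscr{N}_\star$. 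The analogy with amenable IRSs is misleading here: for groups the essential intersection is trivially amenable because subgroups of amenable groups are amenable, whereas $\mathscr{N}_\star$ is an intersection of injective von Neumann subalgebras of the non-finite algebra $\mathscr{A}=B(\ell^2(\Gamma))\otimes A$, and in that setting neither subalgebras of injective algebras nor intersections of injective algebras need be injective --- which is exactly why you flag this as the hard part, and why it remains a genuine gap.

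The alternative you mention only in passing is in fact the paper's proof, and it bypasses $\mathscr{N}_\star$ entirely: one uses the $\Gamma$-equivariant, upper semi-continuous map $\mathcal{C}:\SAAM(M,\mathscr{A})\to\mathfrak{C}(\prob(\FB))$ of Proposition \ref{semi continuous}, pushes $\mu$ forward through $\mathcal{C}\circ\theta$ to get a $\Gamma$-invariant measure on the compact convex subsets of $\prob(\FB)$, and applies \cite[Lemma 2.3]{BDLW16} (since $\prob(\FB)$ has no proper closed convex $\Gamma$-invariant subset) to conclude $\mathcal{C}(JN'J)=\prob(\FB)$ for $\mu$-a.e.\ $N$. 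For each such individual $N$ and every $y\in\FB$ one then gets $\CP\in\UCP_M(\CB,JN'J)$ with $E_0\circ\CP|_{\CFB}=P_{\delta_y}\otimes 1$, whence (via Lemma \ref{ucp homomorphism} and the multiplicative domain of $E_0$) $P_{\delta_y}(\CFB)\otimes 1\subset JN'J$ for all $y$, and the endgame of Theorem \ref{max amenable} gives $N\subset\rad(\Gamma)\ltimes A$. The point is that the a.e.\ statement is obtained \emph{pointwise} on a conull set, never requiring any single amenable algebra to be distilled from $\mu$. To repair your proof you would need to promote that parenthetical remark to the main argument.
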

\begin{proof}
Since $\mu\in\prob(\SA(M))$ is an amenable IRIA, we have $\supp \mu\subset \SAAM(A,M)$.

Fix a metrizable $\Gamma$-boundary $X$ and define $\CB_X$ as in Proposition \ref{semi continuous}. Consider the map 
$$\Phi:=\mathcal{C}\circ\theta:\SAAM(A,M)\to \SAAM(M,\mathscr{A})\to\mathfrak{C}(\prob( X)).$$
We have that $\Phi$ is $\Gamma$-equivariant since both $\mathcal{C}$ and $\theta$ are. Since $\theta$ is continuous and $\mathcal{C}$ is upper semi-continuous (Proposition \ref{semi continuous}), $\Phi$ is measurable. Hence we have $\Phi_*\mu\in\prob(\mathfrak{C}(\prob( X)))$. 

Since $\mu$ is $\Gamma$-invariant and $\Phi$ is $\Gamma$-equivariant, $\Phi_*\mu$ is also $\Gamma$-invariant. Moreover, since $ X$ is a $\Gamma$-boundary, $\prob( X)$ has no $\Gamma$-invariant closed convex proper subspace. By \cite[Lemma 2.3]{BDLW16}, we must have $\Phi_*\mu=\delta_{\prob( X)}$. Then 
$$V_X:=\{N\in\SAAM(A,M)\mid \Phi (N)=\prob( X)\}\subset \SA(M)$$
is a $\mu$-conull subset.

For any $N\in V_X$, we have $\Phi (N)=\mathcal{C}(JN'J)=\prob( X)$. Hence for any $y\in X$, there exists a $\CP\in\UCP_M(\CB_X,JN'J)$ with $E_0\circ \CP|_{ C(X)}=P_{\delta_y}\otimes 1$. Hence $E_0\circ \CP|_{ C(X)}$ is a $*$-homomorphism. By Lemma \ref{ucp homomorphism}, $\CP({ C(X)})$ is contained in the multiplicative domain of $E_0$, i.e., $\ell^\infty(\Gamma)\otimes 1$. Hence 
$$\CP|_{ C(X)}=E_0\circ \CP|_{ C(X)}=P_{\delta_y}\otimes 1: C(X)\to \ell^\infty(\Gamma)\otimes 1.$$
Therefore, $P_{\delta_y}( C(X))\otimes 1=\CP( C(X))\subset JN'J$ for any $y\in X$. 

Let $\Gamma_X:=\mathrm{Ker}(\Gamma\car X)=\cap_{y\in X}\mathrm{Stab}(y)$. Then the von Neumann algebra generated by $\{\ell^\infty(\Gamma/\mathrm{Stab}(y))|y\in X\}$ is 
$$\ell^\infty(\Gamma/\cap_{y\in X}\mathrm{Stab}(y))=\ell^\infty(\Gamma/\Gamma_X)\subset \ell^\infty(\Gamma).$$
Therefore, for any $N\in V_X$, we have $\ell^\infty(\Gamma/\Gamma_X)\otimes1\subset JN'J$.

Denote by $\mathrm{Bnd_m}(\Gamma)$ the set of metrizable $\Gamma$-boundaries. Following \cite[Proposition 7]{Fur03}, we have $\rad (\Gamma)=\cap_{X\in \mathrm{Bnd_m}(\Gamma)}\Gamma_X$. Hence for any $g\in \Gamma\setminus\rad(\Gamma)$, there exists $X_g\in\mathrm{Bnd_m}(\Gamma)$ with $g\not\in\Gamma_{X_g}$. Therefore, we have 
$$\rad (\Gamma)=\cap_{g\in \Gamma\setminus\rad(\Gamma)}\Gamma_{X_g}.$$
Let $V=\cap_{g\in \Gamma\setminus\rad(\Gamma)}V_{X_g}$. Since $\Gamma\setminus\rad(\Gamma)$ is countable, $V$ is still a $\mu$-conull subset of $\SA(M)$. And for any $N\in V$, we have that $\ell^\infty(\Gamma/\Gamma_{X_g})\otimes1\subset JN'J$ for any $g\in \Gamma\setminus\rad(\Gamma)$. And we further have
$$\ell^\infty(\Gamma/\rad(\Gamma))\otimes1=\lag\ell^\infty(\Gamma/\Gamma_{X_g})\otimes 1\rag_{g\in \Gamma\setminus\rad(\Gamma)}\subset JN'J.$$
Following the same discussion at the end of the proof of Theorem \ref{max amenable}, we have 
$$\mbox{$\ell^\infty(\Gamma/\rad(\Gamma))\otimes 1\subset JN'J$}\Rightarrow\mbox{$N\subset \rad(\Gamma)\ltimes A $}.$$
Therefore, for $\mu$-a.e. $N\in \SA(M)$, we have $N\subset \rad(\Gamma)\ltimes A $.
\end{proof}

\section{Amenable invariant random subequivalence relations of free pmp actions}\label{IRR}
In this section, we consider the case that $(A,\tau_A)=L^\infty(X,\nu_X)$ for a free probability measure preserving (pmp) action $\Gamma\car(X,\nu_X)$. For details regarding the theory of countable equivalence relations in this section, we refer to \cite[Subsection 1.5]{AP17} and \cite[Subsection 4.8]{KL16}. 

Let $\CR\subset X\times X$ be a countable nonsingular equivalence relation on a standard probability space $(X,\nu_X)$. Let $\nu$ and $\nu'$ be the canonical left and right measures on $\CR$. That is, for any measurable subset $E\subset\CR$, let
$$E^x=\{(x,y)\in E\} \mbox{ and }E_x=\{(y,x)\in E\}\ (x\in X).$$
Then we have
$$\nu(E)=\int_X|E^x|\d \nu_X(x),\ \nu'(E)= \int_X|E_x|\d \nu_X(x)\mbox{ and }\nu\sim\nu'.$$

Define an equivalence relation on the collection of subequivalence relations of $\CR$: For subequivalence relations $\CS_1,\CS_2\subset \CR$, $\CS_1\sim\CS_2$ if $\nu(\CS_1\Delta\CS_2)=0$. Let
$$\SR(\CR)=\{\mbox{subequivalence relations of $\CR$}\}/\sim.$$
For a subequivalence relation $\CS\subset\CR$, denoted by $\Bar{\CS}\in\SR(\CR)$ the image of $\CS$ under the quotient map. For $\Bar{\CS}_1,\Bar{\CS}_2\in \SR(\CR)$, we simply say that $\Bar{\CS}_1\subset\Bar{\CS}_2$ if $\nu(\CS_1\setminus\CS_2)=0$.

\begin{definition}\label{metric}
Let
$$\mathcal{E}_\mathrm{fin}=\{E\subset\CR\mbox{ measurable }\mid \nu(E)<+\infty\}.$$
For $E\in\mathcal{E}_\mathrm{fin}$, define the finite measure $\nu_E$ by
$$\nu_E(K)=\nu(E\cap K)\ (K\subset\CR\mbox{ measurable}).$$
Define the \textbf{Fr\'{e}chet-Nikodym pseudometric} associated with $\nu_E$ on $\SR(\CR)$ by 
$$d_E(\Bar{\CS}_1,\Bar{\CS}_2)=\nu_E(\CS_1\Delta\CS_2)\ (\Bar{\CS}_1,\Bar{\CS}_2\in \SR(\CR)).$$
And we endow $\SR(\CR)$ with the topology induced by the pseudometric family $\{d_E\mid E\in\mathcal{E}_\mathrm{fin}\}$, i.e., $\Bar{\CS}_n\to \Bar{\CS}$ within $\SA(L(\CR))$ iff $d_E(\Bar{\CS}_n,\Bar{\CS})\to 0$ for any $E\in\mathcal{E}_\mathrm{fin}$.
\end{definition}
Since $\nu\sim\nu'$, the topology defined above does not depend on the choice between $\nu$ and $\nu'$. 

Note that the topology defined above is equivalent to the metric topology defined in \cite[Section 2]{LM24}, and further equivalent to the topology defined in \cite[Section 4.2]{Kec17} and \cite[Discussion before Lemma 3.18]{AFH24} (for the pmp case) by \cite[Corollary 3.5]{LM24}. It is shown in \cite[Theorem 4.13]{Kec17} and \cite[Theorem A]{LM24} that this topology is Polish, hence induces a standard Borel structure.

Let us recall the definition of von Neumann algebras of equivalence relations. Following \cite[Subsection 1.5.2]{AP17}, for a sub-relation $\CS\subset\CR$, let $\mathcal{M}_b(\CS)$ be the set of all bounded Borel functions $F:\CS\to\C$ satisfying that there exists a constant $C>0$ such that for any $(x,y)\in\CS$,
$$|\{z\in X\mid F(z,y)\not=0\}|\leq C,\mbox{ and }|\{z\in X\mid F(x,z)\not=0\}|\leq C.$$
For $F\in\mathcal{M}_b(\CS)$, define the operator $L_F\in B(L^2(\CR,\nu))$ by
$$L_F(\xi)(x,y)=(F\ast\xi)(x, y)=\sum_{z\CS x}F(x,z)\xi(z, y)\ (\xi\in L^2(\CR,\nu)).$$
Then $L(\CS)\subset B(L^2(\CR,\nu))$ is the von Neumann algebra generated by $\{L_F\in B(L^2(\CR,\nu))\mid F\in\mathcal{M}_b(\CS) \}$.

Let $$\SA(L^\infty(X),L(\CR))=\{N\in\SA(L(\CR))\mid L^\infty(X)\subset N\subset L(\CR)\}$$ be the collection of intermediate subalgebras of $L^\infty(X)\subset L(\CR)$, which is a closed subset of $\SA(L(\CR))$.
\begin{theorem}\label{L(S) continuous}
The map $L:\Bar{\CS}\in\SR(\CR)\mapsto L(\CS)\in \SA(L^\infty(X),L(\CR))$ is a homeomorphism.
\end{theorem}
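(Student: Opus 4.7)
The plan is to verify four things: $L$ is well-defined, bijective, continuous, and has a continuous inverse. Well-definedness on $\SR(\CR)$ is immediate since $L(\CS)$ depends only on the $\nu$-measure class of $\CS$. Bijectivity onto $\SA(L^\wx(X),L(\CR))$ is the classical Feldman-Moore correspondence (extended to the nonsingular setting by Aoi and others): the inverse sends an intermediate subalgebra $N$ to the subequivalence relation $\CS_N$ generated by partial isomorphisms $\phi$ whose associated partial isometries $u_\phi=L_{\chi_G}$ lie in $N$. The key technical tool used in both continuity directions is the characterization: $T\in L(\CS)$ if and only if $\supp(T\hat{1})\subset \CS$ up to $\nu$-null sets, where $\hat{1}:=\chi_{\Delta_X}\in L^2(\CR,\nu)$ is the canonical cyclic vector.

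For continuity of $L$: suppose $\Bar{\CS}_n\to \Bar{\CS}$. To show $\limsup_n L(\CS_n)\subset L(\CS)$, take a uniformly bounded sequence $T_n\in L(\CS_n)$ with WO-cluster point $T$. For any $E\subset \CR\setminus \CS$ with $\nu(E)<\wx$, one has $|\langle T\hat{1},\chi_E\rangle|=\lim_n|\langle T_n\hat{1},\chi_{E\cap \CS_n}\rangle|\leq \sup_n\|T_n\|\cdot\sqrt{\nu(E\cap \CS_n)}\to 0$ since $\nu(E\cap \CS_n)\to\nu(E\cap \CS)=0$ by $\Bar{\CS}_n\to \Bar{\CS}$; hence $T\hat{1}$ is supported on $\CS$ and $T\in L(\CS)$. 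For $L(\CS)\subset\liminf_n L(\CS_n)$, since $\liminf$ is a von Neumann algebra it suffices to approximate generators: $L^\wx(X)$ lies in every $L(\CS_n)$, and for a partial isomorphism $\phi\colon A\to B$ with graph $G\subset \CS$, the operator $L_{\chi_G}\in L(\CS)$ is approximated by $L_{\chi_{G_n}}\in L(\CS_n)$ where $G_n:=G\cap \CS_n$. Since $\nu(G\setminus G_n)\to 0$, combined with absolute continuity of integrals with respect to the probability measure $\nu_X$ (applied to the integrable function $x\mapsto \chi_A(x)\sum_y|\xi(\phi(x),y)|^2$, whose integrability is forced by boundedness of $L_{\chi_G}$), one gets convergence in $L^2$-norm on each vector $\xi$; a symmetric argument for the adjoints (with careful accounting for the modular cocycle in the nonsingular case) yields SO*-convergence.

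For continuity of $L^{-1}$: suppose $L(\CS_n)\to L(\CS)$ in Effros-Mar\'echal. The goal is $\nu(E\cap(\CS_n\Delta \CS))\to 0$ for each $E\in \mathcal{E}_\mathrm{fin}$. The crucial observation is that the multiplication operator $\chi_\CS$ on $L^2(\CR,\nu)$ coincides with the cyclic projection $[L(\CS)\hat{1}]$, since $L(\CS)\hat{1}$ is dense in $L^2(\CS,\nu|_{\CS})$, the range of $\chi_\CS$. I then aim to deduce SO-convergence $\chi_{\CS_n}\to\chi_\CS$ from the EM convergence: one inequality, $[L(\CS)\hat{1}]\leq \liminf_n[L(\CS_n)\hat{1}]$, follows from the $\liminf$ property because any $T\hat{1}$ with $T\in L(\CS)$ is a norm-limit of $T_n\hat{1}$ with $T_n\in L(\CS_n)$; the other uses that the cyclic projections lie in the commutants $L(\CS_n)'$, that $L(\CS_n)'\to L(\CS)'$ in EM by \cite[Theorem 3.5]{HW98}, and that each $\chi_{\CS_n}$ is a multiplication operator in the abelian algebra $L^\wx(\CR,\nu)$, forcing WO-cluster points to be identified with $\chi_\CS$ via the bijectivity from Part Two. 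Evaluating the SO convergence against $\xi=\chi_E$ gives $\|\chi_{\CS_n}\xi-\chi_\CS\xi\|^2=\nu(E\cap(\CS_n\Delta\CS))\to 0$.

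The main obstacle is this last step. In the nonsingular setting, $\hat{1}$ is not a trace vector, conditional expectations onto subalgebras may fail to exist, and the standard Jones basic-construction arguments giving SO-convergence of cyclic/Jones projections directly from EM-convergence of algebras do not apply. The proof must instead carefully exploit the multiplication-operator structure of the cyclic projections as characteristic functions in $L^\wx(\CR,\nu)\subset B(L^2(\CR,\nu))$, together with the bijectivity from Part Two and the convergence of commutants, in order to pin down WO-cluster points uniquely as $\chi_\CS$.
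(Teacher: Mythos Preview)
Your bijectivity and your hands-on argument for the continuity of $L$ (the $\limsup$/$\liminf$ verification via supports and approximation by $L_{\chi_{G_n}}$) are valid. The gap is in the continuity of $L^{-1}$, and it comes from a misdiagnosis of the nonsingular situation.

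You assert that in the nonsingular setting ``conditional expectations onto subalgebras may fail to exist.'' For arbitrary subalgebras that is true, but for intermediate subalgebras of a Cartan inclusion it is \emph{false}: Aoi \cite[Theorem 1.1]{Aoi03} shows that every $N$ with $L^\infty(X)\subset N\subset L(\CR)$ admits a normal faithful conditional expectation $E_N$, and since $N'\cap L(\CR)\subset L^\infty(X)'\cap L(\CR)=L^\infty(X)\subset N$, Takesaki's theorem forces $\phi\circ E_N=\phi$, so every such $N$ is $\sigma^\phi$-invariant. This is exactly the key the paper exploits: by \cite[Remark 2.11]{HW98}, on the $\sigma^\phi$-invariant lattice Effros--Mar\'echal convergence is equivalent to SO$^*$-convergence of the $\phi$-preserving conditional expectations; evaluating at the cyclic vector and using $E_{L(\CS)}(x)\xi_\phi=e_{L(\CS)}(x\xi_\phi)$ together with $e_{L(\CS)}=\chi_\CS$ gives the single chain $L(\CS_n)\to L(\CS)\iff \chi_{\CS_n}\xrightarrow{\mathrm{so}}\chi_\CS\iff\Bar\CS_n\to\Bar\CS$, which handles both directions simultaneously and renders your separate forward argument unnecessary.

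Your proposed workaround via commutants does not close the gap. Knowing a WO-cluster point $p$ of $(\chi_{\CS_n})$ lies in $L(\CS)'\cap L^\infty(\CR,\nu)$ and satisfies $p\ge\chi_\CS$ (which you do get from the $\liminf$ side) is not enough to force $p\le\chi_\CS$: the multiplication operators $f\in L^\infty(\CR)$ commuting with $L(\CS)$ are exactly those with $f(x,y)$ constant on $\CS$-classes in the first variable, and there are many such $f$ dominating $\chi_\CS$ (e.g.\ $\chi_{\CS'}$ for any $\CS'\supsetneq\CS$). The appeal to ``bijectivity from Part Two'' does not help here, since $p$ is an operator, not an intermediate subalgebra.
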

\begin{proof}
Firstly, by \cite[Theorem 1]{FM77} and \cite[Lemma 3.3 and Corollary 3.5]{Aoi03}, we know that $L:\SR(\CR)\to \SA(L^\infty(X),L(\CR))$ is a well-defined bijection.

Let $\phi\in L(\CR)_*$ be the normal faithful state associated with the measure $\nu$, i.e., $\phi=\lag\,\cdot\,\mathds{1}_X,\mathds{1}_X\rag_{L^2(\CR,\nu)}$. Then we have $L^2(\CR,\nu)=L^2(L(\CR),\phi)$ with cyclic vector $\xi_\phi=\mathds{1}_X$. Let $J=J_\phi$ be the modular conjugation operator of $(L(\CR),\phi)$. Then for $x_n,x\in L(\CR)$, we have
\begin{equation}\label{xn to x so}
\begin{aligned}
    &x_n\xrightarrow{\mathrm{so}} x\\
\iff& (x_n-x) J y J \xi_\phi\to 0, \ \forall y\in L(\CR) &(\mbox{$\overline{JL(\CR)J\xi_\phi}=L^2(\CR,\nu)$})\\
\iff& J y J (x_n-x) \xi_\phi\to 0, \ \forall y\in L(\CR))\\
\iff& (x_n-x) \xi_\phi\to 0.
\end{aligned}  
\end{equation}

Let $\{\sigma_t^\phi\}_{t\in\R}$ be the modular automorphism group of $(L(\CR),\phi)$ and
$$\SA_\phi(L^\infty(X),L(\CR))=\{N\in\SA(L^\infty(X),L(\CR))\mid \sigma^\phi_t(N)=N,\ \forall t\in\R\}.$$
For any $N\in\SA(L^\infty(X),L(\CR))$, by \cite[Theorem 1.1]{Aoi03}, there exists a normal faithful conditional expectation $E_N:L(\CR)\to N$. Moreover, since $N'\cap L(\CR)\subset L^\infty(X)'\cap L(\CR)=L^\infty(X)\subset N$, by \cite[Proposition 4.3 and Theorem 4.2]{TakII}, we have $\phi=\phi\circ E_N$ and $N\in \SA_\phi(L^\infty(X),L(\CR))$. Therefore, we have
\begin{equation}\label{SA=SA_phi}
\SA(L^\infty(X),L(\CR))=\SA_\phi(L^\infty(X),L(\CR)).
\end{equation}

For $\Bar{\CS}_n, \Bar{\CS}\in \SR(\CR)$, we have
\begin{equation}\label{continuity of L-1}
\begin{aligned}
    &L(\CS_n)\to L(\CS)\\
\iff& E_{L(\CS_n)}(x)\xrightarrow{\mathrm{so}^*} E_{L(\CS)}(x), \ \forall x\in L(\CR) &\hspace{-10em}(\mbox{by (\ref{SA=SA_phi}) and \cite[Remark 2.11]{HW98}})\\
\iff& (E_{L(\CS_n)}(x)-E_{L(\CS)}(x)) \xi_\phi\to 0, \ \forall x\in L(\CR) &(\mbox{by (\ref{xn to x so})})\\
\iff& e_{L(\CS_n)}(x\xi_\phi)-e_{L(\CS)}(x\xi_\phi)\to 0, \ \forall x\in L(\CR)&(\mbox{$E_{L(\CS)}(x)\xi_\phi=e_{L(\CS)}(x\xi_\phi)$})\\
\iff& e_{L(\CS_n)}\xrightarrow{\mathrm{so}} e_{L(\CS)}\\
\iff& \mathds{1}_{\CS_n}\xrightarrow{\mathrm{so}}\mathds{1}_\CS & (e_{L(\CS)}=\mathds{1}_\CS)\\
\iff& \nu_E(\CS_n\Delta\CS)=\Vert (\mathds{1}_{\CS_n}-\mathds{1}_\CS)\mathds{1}_E\Vert^2\to 0,\ \forall E\in\mathcal{E}_\mathrm{fin} \vspace{-10em}& (L^2(\CR,\nu)=\overline{\mathrm{span}}\{\mathds{1}_E\}_{E\in\mathcal{E}_\mathrm{fin}})\\
\iff& \Bar{\CS}_n\to \Bar{\CS}.
\end{aligned}  
\end{equation}
Therefore, the bijection $$L:\SR(\CR)\to \SA(L^\infty(X),L(\CR))$$ is a homeomorphism.

\end{proof}

The study of the space of subequivalence relations was initiated by Kechris and has yielded many remarkable results. Moreover, Theorem \ref{L(S) continuous} can offer an operator algebraic approach to some of these classical results. For example, the following corollary generalizes and gives a new proof to \cite[Theorem 8.1 and 10.1]{Kec17}, where the pmp case was considered.
\begin{corollary}\label{am and erg}
Let
$$\SR_{\mathrm{am}}(\CR)=\{\Bar{\CS}\in\SR(\CR)\mid \CS\mbox{ is amenable}\},$$
and
$$\SR_{\mathrm{erg}}(\CR)=\{\Bar{\CS}\in\SR(\CR)\mid \CS \mbox{ is ergodic}\}.$$
Then $\SR_{\mathrm{am}}(\CR)$ and $\SR_{\mathrm{erg}}(\CR)$ are G$_\delta$ subsets of $\SR(\CR)$. Moreover, when $\CR$ is pmp, $\SR_{\mathrm{am}}(\CR)$ is a closed subset.
\end{corollary}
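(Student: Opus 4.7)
The plan is to transport both statements through the homeomorphism $L:\SR(\CR)\to\SA(L^\wx(X),L(\CR))$ of Theorem \ref{L(S) continuous} and then invoke known structural results on the Effros-Mar\'echal topology on $\mathrm{vN}(H)$, where $H=L^2(L(\CR),\phi)$. By the homeomorphism, it suffices to show that the corresponding subsets of $\SA(L^\wx(X),L(\CR))$ are $G_\delta$ (respectively closed in the pmp case), and $\SA(L^\wx(X),L(\CR))$ itself is a closed subset of $\mathrm{vN}(H)$.

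For amenability, I would first recall the Connes-Feldman-Weiss theorem: $\CS$ is amenable if and only if $L(\CS)$ is an amenable (equivalently, injective) von Neumann algebra. By a classical result of Haagerup-Winslow \cite{HW98}, the set of injective von Neumann subalgebras of $B(H)$ is a $G_\delta$ subset of $\mathrm{vN}(H)$ in the Effros-Mar\'echal topology. Intersecting with the closed set $\SA(L^\wx(X),L(\CR))$ and pulling back through $L$ yields that $\SR_\mathrm{am}(\CR)$ is $G_\delta$ in $\SR(\CR)$. When $\CR$ is pmp, $L(\CR)$ is a tracial von Neumann algebra, and the stronger statement follows from \cite[Proposition 4.7]{AHO23}: $\SAAM(L(\CR))$ is closed in $\SA(L(\CR))$, hence $\SR_\mathrm{am}(\CR)=L^{-1}\left(\SAAM(L(\CR))\cap\SA(L^\wx(X),L(\CR))\right)$ is closed.

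For ergodicity, since $L^\wx(X)\subset L(\CS)$ is a Cartan inclusion, one has $L^\wx(X)'\cap L(\CS)=L^\wx(X)$, and consequently the center $Z(L(\CS))$ coincides with the subalgebra of $\CS$-invariant functions in $L^\wx(X)$. Therefore $\CS$ is ergodic if and only if $L(\CS)$ is a factor. Again by \cite{HW98}, the collection of factors in $B(H)$ is a $G_\delta$ subset of $\mathrm{vN}(H)$, so intersecting with $\SA(L^\wx(X),L(\CR))$ and pulling back through $L$ shows $\SR_\mathrm{erg}(\CR)$ is $G_\delta$ in $\SR(\CR)$.

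Once Theorem \ref{L(S) continuous} is in hand, the remaining effort is essentially bookkeeping: matching equivalence-relation conditions (amenability, ergodicity) to their algebraic counterparts at the level of $L(\CS)$, and quoting the appropriate Effros-Mar\'echal classification results. The only real point to double-check is the identification for ergodicity, which relies on the Cartan property of $L^\wx(X)\subset L(\CS)$; no substantial new analysis beyond the already-established homeomorphism $L$ is required.
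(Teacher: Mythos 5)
Your proposal is correct and follows essentially the same route as the paper: transport through the homeomorphism $L$ of Theorem \ref{L(S) continuous}, identify amenability and ergodicity of $\CS$ with amenability and factoriality of $L(\CS)$, quote the Haagerup--Winsl{\o}w $G_\delta$ results, and use \cite[Proposition 4.7]{AHO23} for closedness in the pmp case. The extra justifications you supply (Connes--Feldman--Weiss for amenability, the Cartan relative commutant for the ergodicity--factoriality equivalence) are correct elaborations of steps the paper states without proof.
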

\begin{proof}
Let $\SAAM(L(\CR))$ (resp. $\mathrm{SF}(L(\CR))$) be the collection of amenable subalgebras (resp. subfactors) of $\SA(L(\CR))$. Since a subequivalence relation $\CS\subset \CR$ is amenable (resp. ergodic) if and only if $L(\CS)$ is amenable (resp. a factor), we have 
$$\SR_{\mathrm{am}}(\CR)=L^{-1}(\SAAM(L(\CR)))\mbox{ and }\SR_{\mathrm{erg}}(\CR)=L^{-1}(\mathrm{SF}(L(\CR))).$$
Also note that by Theorem \ref{L(S) continuous}, the map $L: \Bar{\CS}\in\SR(\CR)\mapsto L(\CS)\in\SA(L(\CR))$ is continuous and by \cite[Theorem 5.2 and Corollary 3.11]{HW98}, $\SAAM(L(\CR)),\mathrm{SF}(L(\CR))\subset\SA(L(\CR))$ are G$_\delta$ subsets. Therefore, $\SR_{\mathrm{am}}(\CR)$ and $\SR_{\mathrm{erg}}(\CR)$ are also G$_\delta$ subsets of $\SR(\CR)$.

When $\CR$ is pmp, i.e., $L(\CR)$ is tracial, it follows from \cite[Proposition 4.7]{AHO23} that $\SAAM(L(\CR))\subset\SA(L(\CR))$ is a closed subset. Hence $$\SR_{\mathrm{am}}(\CR)=L^{-1}(\SAAM(L(\CR)))$$ is also a closed subset of $\SR(\CR)$.
\end{proof}

\begin{remark} 
Recall that a countable measured groupoid $(\mathcal{G},\nu_s,\nu_t)$ is a Borel groupoid satisfying that $\mathcal{G}^{(0)}=(X,\nu_X)$ is a standard probability space, and $s^{-1}(x)$ and $t^{-1}(x)$ are at most countable for any $x\in X$. And the measure $\nu_s$ and $\nu_t$ on $\mathcal{G}$ satisfies that for any $E\subset\mathcal{G}$ measurable,
$$\nu_s(E)=\int_X |E\cap s^{-1}(x) |\d \nu_X(x), \ \nu_t(E)=\int_X |E\cap t^{-1}(x) |\d \nu_X(x)\mbox{ and }\nu_s\sim\nu_t.$$

In Definition \ref{metric}, we can replace the countable nonsingular equivalence relation $(\CR,\nu,\nu')$ with a countable measured groupoid $(\mathcal{G},\nu_s,\nu_t)$. Then following the same proof in \cite[Theorem 2.6]{LM24}, we also define a Polish topology on the space of subgroupoids of $\mathcal{G}$ (modulo null sets). 

Moreover, for a wide subgroupoid $\mathcal{H}$ (i.e., $\mathcal{H}\leq\mathcal{G}$ with $\mathcal{G}^{(0)}\subset\mathcal{H}$), a natural conditional expectation $E_{L(\mathcal{H})}:L(\mathcal{G})\to L(\mathcal{H})$ can be given by $L_F\mapsto L_{\mathds{1}_\mathcal{H}\cdot F}$ ($F\in\mathcal{M}_b(\mathcal{G})$), which preserves the normal faithful state associated with $\nu_s$. Following the same discussion (\ref{continuity of L-1}) in the proof of Theorem \ref{L(S) continuous}, we can show that the map from wide subgroupoids of $\mathcal{G}$ to subalgebras of $L(\mathcal{G})$ is a topological embedding. 

Just like Corollary \ref{am and erg}, we also have the notion of G$_\delta$-ness of the subspaces of amenable wide subgroupoids and ergodic ICC wide subgroupoids (for ICC groupoids and the factoriality of $L(\mathcal{G})$, see \cite{BCDK2024}), as well as the closeness of the space of amenable wide subgroupoids in the pmp case.
\end{remark}

From now on, we consider $\CR=\CR_{\Gamma\car X}$ for a free pmp action $\Gamma \car (X,\nu_X)$. Then $\SR(\CR_{\Gamma\car X})$ admits a natural continuous $\Gamma$-action given by 
\begin{equation}\label{action on SR}
g\Bar{\CS}=\overline{g\CS}:=\overline{\{(gx,gy)\in X\times X\mid (x, y)\in \CS\}}
\end{equation}
for $g\in\Gamma$ and $\Bar{\CS}\in\SR(\CR_{\Gamma\car X})$.

With the group action defined above, we have the following definition as an analogue of invariant random subgroup/subalgebra. In contrast, a similar definition for the equivalence relations on countable discrete groups appears in \cite[Chapter 15]{Kec17}.
\begin{definition}\label{def IRR}
An \textbf{invariant random subequivalence relation} (abbreviated as IRR) of $\CR_{\Gamma\car X}$ is a $\Gamma$-invariant Borel probability measure on $\SR(\CR_{\Gamma\car X})$. An IRR $\mu\in\prob(\CR_{\Gamma\car X})$ is \textbf{amenable} if $\mu$-a.e. $\Bar{\CS}\in\CR_{\Gamma\car X}$ is amenable.
\end{definition}

As a direct corollary of Theorem \ref{amenable IRS for crossed products}, we have the following theorem regarding amenable IRRs.
\begin{theorem}\label{Thm IRR}
For a free pmp action $\Gamma \car (X,\nu_X)$, let $\mu\in\prob(\SR(\CR_{\Gamma\car X}))$ be an amenable IRR. Then for $\mu$-a.e. $\Bar{\CS}\in\SR(\CR_{\Gamma\car X})$, we have
$$\Bar{\CS}\subset \Bar{\CR}_{\rad(\Gamma)\car X}.$$
In particular, any $\Gamma$-invariant amenable subequivalence relation of $\CR_{\Gamma\car X}$ must be a subequivalence relation of $\CR_{\rad(\Gamma)\car X}$, up to null sets.
\end{theorem}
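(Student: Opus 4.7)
The plan is to obtain Theorem \ref{Thm IRR} as a direct transport of Theorem \ref{amenable IRS for crossed products} (Theorem C) along the homeomorphism $L$ of Theorem \ref{L(S) continuous} (Theorem D). Take $(A,\tau_A)=L^\wx(X,\nu_X)$ with the Koopman action. Since $\Gamma\car(X,\nu_X)$ is free pmp, the Feldman--Moore theorem identifies $\Gamma\ltimes A \cong L(\CR_{\Gamma\car X})$, with $A$ sitting inside as the Cartan subalgebra $L^\wx(X)$. Under this identification, Theorem \ref{L(S) continuous} gives a homeomorphism
\[
L:\SR(\CR_{\Gamma\car X})\;\xrightarrow{\ \cong\ }\;\SA(L^\wx(X),\Gamma\ltimes A),
\]
which in particular is a Borel isomorphism between standard Borel spaces.

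The key verification is that $L$ is $\Gamma$-equivariant, where $\Gamma$ acts on $\SR(\CR_{\Gamma\car X})$ by $g\Bar{\CS}=\overline{\{(gx,gy)\mid (x,y)\in\CS\}}$ and on $\SA(L^\wx(X),\Gamma\ltimes A)$ by conjugation $N\mapsto u_g N u_g^*$. This is immediate from Feldman--Moore: the unitary $u_g$ corresponds to the characteristic function of the graph of $g$, and conjugation by it translates the support set of any $L_F$ by $g$ on both coordinates, so $u_g L(\CS) u_g^*=L(g\CS)$. Given an amenable IRR $\mu\in\prob(\SR(\CR_{\Gamma\car X}))$, the push-forward $L_*\mu\in\prob(\SA(\Gamma\ltimes A))$ is therefore $\Gamma$-invariant, is supported on intermediate subalgebras $A\subset N\subset\Gamma\ltimes A$, and consists of amenable subalgebras $\mu$-a.e.\ because $\CS$ is amenable iff $L(\CS)$ is amenable (in the pmp case these notions coincide, as recalled in Corollary \ref{am and erg}). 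In other words, $L_*\mu$ is an amenable IRIA in the sense of Definition \ref{def IRA}.

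Applying Theorem \ref{amenable IRS for crossed products} to $L_*\mu$ yields that for $L_*\mu$-a.e.\ $N$, one has $N\subset\rad(\Gamma)\ltimes A$. Since $\rad(\Gamma)\ltimes L^\wx(X)=L(\CR_{\rad(\Gamma)\car X})$ (again Feldman--Moore, applied to the restricted free pmp action), pulling back through the homeomorphism $L$ and using the monotonicity statement of Theorem \ref{L(S) continuous} (i.e.\ $\Bar{\CS}_1\subset\Bar{\CS}_2$ iff $L(\CS_1)\subset L(\CS_2)$, which follows from the continuity of $L^{-1}$ together with $L(\CS_1\cap\CS_2)=L(\CS_1)\cap L(\CS_2)$) gives $\Bar{\CS}\subset\Bar{\CR}_{\rad(\Gamma)\car X}$ for $\mu$-a.e.\ $\Bar{\CS}$. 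The deterministic statement about a single $\Gamma$-invariant amenable subequivalence relation $\Bar{\CS}$ follows by taking $\mu=\delta_{\Bar{\CS}}$.

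There is no genuine obstacle beyond bookkeeping: the main thing to check carefully is the equivariance $L(g\Bar{\CS})=gL(\CS)$ and the measurability/Borel-isomorphism properties needed to push forward $\mu$. Both are consequences of Theorem \ref{L(S) continuous} and Feldman--Moore, so Theorem \ref{Thm IRR} truly is a corollary of Theorem \ref{amenable IRS for crossed products}.
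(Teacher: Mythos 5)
Your proposal is correct and follows essentially the same route as the paper: establish $\Gamma$-equivariance of the homeomorphism from subequivalence relations to intermediate subalgebras (the paper factors this as $\Theta\circ L$, composing Theorem \ref{L(S) continuous} with the Feldman--Moore identification $L(\CR_{\Gamma\car X})\cong\Gamma\ltimes L^\wx(X)$, where you merge the two steps), push $\mu$ forward to an amenable IRIA, apply Theorem \ref{amenable IRS for crossed products}, and pull back via $L(\CR_{\rad(\Gamma)\car X})=\rad(\Gamma)\ltimes L^\wx(X)$. No substantive difference.
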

\begin{proof}
Let $\CR=\CR_{\Gamma\car X}$. Note that $L(\CR)$ admits a natural $\Gamma$-action given by
$$gT=L_{S_g}TL_{S_g}^*\ (g\in \Gamma, \ T\in L(\CR)),$$
where $S_g\in \mathcal{M}_b(\CR)$ is the characteristic function of $\{(gx,x)\mid x\in X\}$. Hence $\SA(L(\CR))$ admits a natural $\Gamma$-action given by
$$gN=L_{S_g}NL_{S_g}^*\ (g\in \Gamma, \ N\in\SA(L(\CR))).$$
Moreover, for any $g\in\Gamma$ and subequivalence relation $\CS\subset\CR$, we have 
$$\mathcal{M}_b(g\CS)=\{F(g^{-1}\,\cdot\,)\mid F\in\mathcal{M}_b(\CS)\}=S_g\ast\mathcal{M}_b(\CS)\ast S_g^*.$$ 
Hence
$$\{L_F\mid F\in\mathcal{M}_b(g\CS)\}=L_{S_g}\{L_F\mid F\in\mathcal{M}_b(\CS)\}L_{S_g}^*\subset B(L^2(\CR,\nu)).$$
Therefore, we have $L(g\CS)=L_{S_g}L(\CS)L_{S_g}^*$. So the map $L:\Bar{\CS}\in\SR(\CR)\mapsto L(\CS)\in \SA(L^\infty(X),L(\CR))$ is a $\Gamma$-equivariant homeomorphism by Theorem \ref{L(S) continuous}.

Following \cite[Remark 1.5.6]{AP17}, there is a $*$-isomorphism $\theta:L(\CR)\cong L(\Gamma \car X)$ with $\theta(L_{S_g})=u_g$ for $g\in \Gamma$ and $\theta|_{L^\infty(X)}=\id_{L^\infty(X)}$. Therefore,
$$\Theta: N\in \SA(L^\infty(X),L(\CR))\mapsto\theta(N)\in\SA(L^\infty(X),L(\Gamma \car X))$$
is a $\Gamma$-equivariant homeomorphism.

Now we consider the map
$$\Phi:=\Theta\circ L: \SR(\CR)\to\SA(L^\infty(X),L(\CR))\to\SA(L^\infty(X),L(\Gamma \car X)).$$
Since both $L$ and $\Theta$ are $\Gamma$-equivariant homeomorphisms, $\Phi$ is also a $\Gamma$-equivariant homeomorphisms, hence measurable.

Assume that $\mu\in\prob(\SR(\CR))$ is a $\Gamma$-IRR with $\mu(\SR_{\mathrm{am}}(\CR))=1$. Then we have $\Phi_*\mu\in\prob(\SA(L(\Gamma \car X)))$. Since $\mu$ is $\Gamma$-invariant and $\Phi$ is $\Gamma$-equivariant, we have that $\Phi_*\mu$ is also $\Gamma$-invariant.

Take $(A,\tau_A)= L^\infty(X,\nu_X)$, $M=\Gamma \ltimes A=L(\Gamma \car X)$ and follow the notations from Section \ref{IRA}. Note that $\CS$ is amenable if and only if $\Phi(\Bar{\CS})=\theta(L(\CS))$ is amenable.
Therefore, we have $\SR_{\mathrm{am}}(\CR)=\Phi^{-1}(\SAAM(A,M))$ and
$$\Phi_*\mu(\SAAM(A,M))=\mu(\Phi^{-1}(\SAAM(A,M))=\mu(\SR_{\mathrm{am}}(\CR))=1.$$

Now $\Phi_*\mu\in\prob(\SA(L(\Gamma \car X)))$ satisfies all the conditions in Theorem \ref{amenable IRS for crossed products}. Therefore, for $\Phi_*\mu$-a.e. $N\in\SA(L(\Gamma \car X))$, we have 
$$N\subset L(\rad(\Gamma)\car X).$$
That is, for $\mu$-a.e. $\Bar{\CS}\in\SR(\CR_{\Gamma\car X})$, we have
$$L(\CS)\subset \theta^{-1}(L(\rad(\Gamma)\car X))=L(\CR_{\rad(\Gamma)\car X}),$$
which is equivalent to
$$\Bar{\CS}\subset \Bar{\CR}_{\rad(\Gamma)\car X}.$$
\end{proof}
\section{Appendix}\label{appendix}\begin{center}by \textsc{Tattwamasi Amrutam and Yongle Jiang}\end{center}

In this section, we present a significant generalization of Theorem~\ref{Thm B} and Theorem~\ref{Thm C}. While Theorem~\ref{Thm B} and Theorem~\ref{Thm C} establish crucial structural properties for amenable $\Gamma$-invariant von Neumann subalgebras, they rely on the non-commutative boundary approach, which necessitates the assumption that $(A,\tau_A)$ must be amenable and algebras must contain $A$. We demonstrate that the Furstenberg boundary $\partial_F\Gamma$, associated with the group $\Gamma$, provides sufficient analytical power to address these questions in the context of tracial crossed products. This approach allows us to remove the assumption that $(A,\tau_A)$ is amenable and $N$ contains $A$  while preserving their essential conclusions. In particular, we present alternative proofs of Theorem~\ref{Thm B} and \ref{Thm C} and generalize them.

Throughout this section, $(\mathcal{N},\tau)$ will denote a tracial von Neumann algebra on which $\Gamma$ acts by preserving the trace.
\subsection{Description of the Crossed Product}
\label{subsec:commutant}
We shall view $\mathcal{N}\rtimes\Gamma\subset \mathbb{B}(L^2(\mathcal{N},\tau)\otimes \ell^2\Gamma)$. Moreover, 
Let $X$ be a minimal $\Gamma$-space. Fixing $x_0\in X$ gives us a unital $\Gamma$-equivariant injective $*$-
homomorphism $i: C(X)\to \ell^{\infty}(\Gamma)$, given by $i(f)(t) = f(tx_0)$. We can view $\ell^{\infty}(\Gamma)$ as multiplication operators on $\mathbb{B}(\ell^2(\Gamma))$. For $f\in \ell^{\infty}(\Gamma)$, the map $M(f):\ell^2(\Gamma)\to\ell^2(\Gamma)$ defined by $M(f)(\delta_t)=f(t)\delta_t$ is linear and bounded. Therefore, we can identity $C(X)$ with its image $1\otimes M(C(X))\subset 1\otimes M(\ell^{\infty}(\Gamma))$ inside $1\otimes \mathbb{B}(\ell^2(\Gamma))$. While this embedding may not be canonical, it is enough to fix one embedding for our purposes. Under this identification, we see that every element of $C(X)$ commutes with $\mathcal{N}$.
\begin{lemma}
\label{lem:irreducibility}
Let $(\mathcal{N},\tilde{\tau})$ be a tracial von Neumann algebra. Let $\Gamma\curvearrowright (\mathcal{N},\tilde{\tau})$ be a trace preserving action. Let $\mathcal{M}$ be an amenable $\Gamma$-invariant subalgebra of $\mathcal{N}\rtimes\Gamma$. Denote by $\tau$ the trace $\tilde{\tau}\circ\mathbb{E}$. Let $$\text{Hype}_{\tau}(\mathcal{M})=\left\{\varphi:\varphi\text{ is a $\mathcal{M}$-hyperstate and }\varphi|_{\mathcal{N}\rtimes\Gamma}=\tau\right\}$$
Then,
\[\text{Hype}_{\tau}(\mathcal{M})|_{C(\partial_F\Gamma)}=\text{Prob}(\partial_F\Gamma).\]
\begin{proof}
The set $\text{Hype}_{\tau}(\mathcal{M})$ being non-empty is a consequence of \cite[Proposition~2.4]{AHO23}. The claim follows from the irreducibility of $\text{Prob}(\partial_F\Gamma)$.
\end{proof}
\end{lemma}

We now prove a general version of Theorem~\ref{Thm B}.

\begin{letterthm}
\label{prop: main prop in appendix}
Let $(\mathcal{N},\tilde{\tau})$ be a tracial von Neumann algebra. Let $\Gamma\curvearrowright (\mathcal{N},\tilde{\tau})$ be a trace preserving action. Then for any amenable $\Gamma$-invariant subalgebra $\mathcal{M}\subset\mathcal{N}\rtimes\Gamma$, we must have
$$\mathcal{M}\subset\mathcal{N}\rtimes\text{Rad}(\Gamma).$$
In particular, when $(\mathcal{N},\tilde{\tau})$ is amenable, $\mathcal{N}\rtimes\text{Rad}(\Gamma)$ is the maximal amenable $\Gamma$-invariant subalgebra of $\mathcal{N}\rtimes\Gamma$.
\begin{proof}

The proof is motivated by the proof from \cite{BC2015}.
Let $\mathcal{M}$ be an amenable $\Gamma$-invariant subalgebra of $\mathcal{N}\rtimes\Gamma$. $u\in\mathcal{M}$ be an unitary element. Let $\|\cdot\|_2$-norm denote the norm induced on $\mathcal{N}\rtimes\Gamma$ by $\tau=\tilde{\tau}\circ\mathbb{E}$. Note that here $\mathbb{E}:\mathcal{N}\rtimes\Gamma\to\mathcal{N}$ is the canonical conditional expectation. Since $\text{Rad}(\Gamma)=\text{Ker}(\Gamma\curvearrowright\partial_F\Gamma)$~\cite{BKKO14}, we shall show that $\|\mathbb{E}_{\Gamma_x}(u)\|_2=1$ for all $x\in \partial_F\Gamma$ from where the claim shall follow. Note that $\mathbb{E}_{\Gamma_x}:\mathcal{N}\rtimes\Gamma\to\mathcal{N}\rtimes\Gamma_x$ is the canonical conditional expectation. Let $\epsilon>0$. Let $u_0=\sum_{i=1}^na_i\lambda(s_i)\in\mathcal{N}\rtimes\Gamma$ be such that \begin{equation}
\label{firstapprox}
\|u^*-u_0\|_2<\epsilon.\end{equation} Let us write $F=\{s_1,s_2,\ldots,s_n\}$. Then, we can rewrite $$u_0=\sum_{s\in F\cap\Gamma_x}a_s\lambda(s)+\sum_{s\in F\cap(\Gamma_x)^c}a_s\lambda(s)$$ Now, we can find $f\in C(\partial_F\Gamma)$ such that $f(x)=1$ and $f(s^{-1}x)=0$ for all $s\in F\cap(\Gamma_x)^c$. Using Lemma~\ref{lem:irreducibility}, we can find a $\mathcal{M}$-hyperstate $\varphi$ such that $\varphi|_{\mathcal{N}\rtimes\Gamma}=\tau$ and $\varphi|_{C(\partial_F\Gamma)}=\delta_x$.
Let us now observe that
\begin{align*}
\left|\varphi\left((uu_0-1)f\right)\right|&\le \sqrt{\varphi\left((uu_0-1)(uu_0-1)^*\right)}\sqrt{\varphi(f^*f)}\\&=\|uu_0-1\|_2&\text{$\left(\varphi|_{\mathcal{N}\rtimes\Gamma}=\tau\right)$}\\&\le \|u^*-u_0\|_2<\epsilon.    
\end{align*}
Therefore,
\begin{align*}
\left|\varphi\left(u_0fu\right)\right|&= \left|\varphi\left((uu_0f\right)\right|\\&=\left|\varphi\left((uu_0-1)f\right)+\varphi\left(f\right)\right|\\&\ge \left|\varphi(f)-|\varphi\left((uu_0-1)f\right)|\right|\ge 1-\epsilon.    
\end{align*}
To reiterate, 
\begin{equation}
\label{ineq:firstinequality}
\left|\varphi\left(u_0fu\right)\right|\ge 1-\epsilon.   \end{equation}
On the other hand,
\begin{align*}
&\left|\varphi\left((u_0fu\right)\right|\\&\le\left|\varphi\left(\left(\sum_{s\in F\cap(\Gamma_x)}a_s\lambda(s)\right)fu\right)\right|+  \left|\varphi\left(\left(\sum_{s\in F\cap(\Gamma_x)^c}a_s\lambda(s)\right)fu\right)\right|\\&\le \left|\varphi\left(\mathbb{E}_{\Gamma_x}(u_0)fu\right)\right|+\sum_{s\in F\cap(\Gamma_x)^c}\left|\varphi\left(a_s\lambda(s)fu\right)\right| 
\end{align*}
Now, for every $s\in F\cap(\Gamma_x)^c$, using the Cauchy-Schwartz inequality, we see that
\begin{align*}
\left|\varphi\left(a_s\lambda(s)fu\right)\right|&\le\sqrt{\varphi\left(a_s(s.f(s.f)^*)a_s^*\right)}    
\end{align*}
Since $s.f\in C(\partial_F\Gamma)$, $a_s\in \mathcal{N}$ and every element of $C(\partial_F\Gamma)$ commutes with $\mathcal{N}$, we see that
\begin{align*}\varphi\left(a_s(s.f(s.f)^*)a_s^*\right)&=\varphi\left((s.f(s.f)^*)a_sa_s^*\right)\\&=f(s^{-1}x)\overline{f(s^{-1}x)}\varphi\left(a_sa_s^*\right)\\&=0.\end{align*}
Consequently, it follows that 
\[\sum_{s\in F\cap(\Gamma_x)^c}\left|\varphi\left(a_s\lambda(s)fu\right)\right|=0.\]
Combining this along with equation~\eqref{firstapprox}, equation~\eqref{ineq:firstinequality} and Cauchy-Schwartz inequality, we see that
\begin{align*}
1-\epsilon\le \left|\varphi(u_0fu)\right|&\le\left|\varphi\left(\mathbb{E}_{\Gamma_x}(u_0)fu\right)\right|\\&\le\left\|\mathbb{E}_{\Gamma_x}(u_0)\right\|_2\\&\le\left\|\mathbb{E}_{\Gamma_x}(u)\right\|_2+\epsilon
\end{align*}
As a result, it follows that $\left\|\mathbb{E}_{\Gamma_x}(u)\right\|_2\ge 1-2\epsilon$. Since $\epsilon>0$ is arbitrary, it follows that $u\in\mathcal{N}\rtimes\Gamma_x$. Since $x\in\partial_F\Gamma$ is arbitrary, it follows that 
\[u\in\bigcap_{x\in\partial_F\Gamma}\left(\mathcal{N}\rtimes\Gamma_x\right)=\mathcal{N}\rtimes\text{Rad}(\Gamma).\]
The proof is complete.
\end{proof}
\end{letterthm}

We present another proof of the above theorem, which follows closely the strategy of proving \cite[Lemma 3.2]{AHO23}.

\begin{proof}[2nd proof of Theorem \ref{prop: main prop in appendix}]
Let $\mathcal{M}$ be an amenable $\Gamma$-invariant subalgebra of $\mathcal{N}\rtimes \Gamma$. Fix any $x\in \mathcal{M}$. It suffices to show that $\tau(x\lambda(s)a)=0$ for all $s\in\Gamma\setminus \text{Rad}(\Gamma)$ and all $a\in \mathcal{N}$.

Indeed, write $x=\sum_{s\in\Gamma}x_s\lambda(s)$ for its Fourier expansion as an element in $\mathcal{N}\rtimes \Gamma$. The above implies that $\tau(x_sx_s^*)=\tau(x\lambda(s^{-1})x_s^*)=0$ for all $s\in \Gamma\setminus \text{Rad}(\Gamma)$. Thus $x_s=0$ for all $s\in \Gamma\setminus \text{Rad}(\Gamma)$, hence $x\in \mathcal{N}\rtimes \text{Rad}(\Gamma)$.

To show $\tau(x\lambda(s)a)=0$, we fix any $\phi\in \text{Hype}_{\tau}(\mathcal{M})$ such that $\phi|_{C(\partial_F\Gamma)}=\delta_x$, where $x\in \text{Prob}(\partial_F\Gamma)$ is any point such that $sx\neq x$. Note that the existence of $\phi$ and $x$ is guaranteed by Lemma \ref{lem:irreducibility} and $\text{Rad}(\Gamma)=\text{Ker}(\Gamma\curvearrowright \partial_F\Gamma)$ respectively. Note that $C(X)$ is contained in the multiplicative domain of $\phi$.
Take any $f\in C(\partial_F\Gamma)$ such that $0\leq f\leq 1$, $f(x)=1$ and $f(s^{-1}x)=0$. Note that $\phi(f)=f(x)=1$.

Then we have the following estimate.
\begin{align*}
&\left|\tau(x\lambda(s)a)\right|\\
&=\left|\phi(x\lambda(s)a)\right|~(\text{since $\phi|_{\mathcal{N}\rtimes \Gamma}=\tau$})\\
&=\left|\phi(x\lambda(s)a)\phi(f)\right|~(\text{since $\phi(f)=1$})\\
&=\left|\phi(x\lambda(s)af)\right|~(\text{since $f$ belongs to the multiplicative domain of $\phi$})\\
&=\left|\phi(x\sigma_s(a)\sigma_s(f)\lambda(s))\right|\\
&=\left|\phi(x\sigma_s(a)\sqrt{\sigma_s(f)}\sqrt{\sigma_s(f)}\lambda(s))\right|\\
&\leq ((\phi(x\sigma_s(a)\sigma_s(f)\sigma_s(a^*)x^*))^{\frac{1}{2}}(\phi(f))^{\frac{1}{2}}~(\mbox{by Cauchy-Schwarz inequality})\\
&=(\phi(\sigma_s(a)\sigma_s(f)\sigma_s(a^*)x^*x)^{\frac{1}{2}}\cdot 1~(\text{since $x\in \mathcal{M}$ and $\phi$ is $\mathcal{M}$-central})\\
&=(\phi(\sigma_s(a)\sqrt{\sigma_s(f)}\sqrt{\sigma_s(f)}\sigma_s(a^*)x^*x)^{\frac{1}{2}}\cdot 1\\
&\leq (\phi(\sigma_s(a)\sigma_s(f)\sigma_s(a^*)))^{\frac{1}{4}}(\phi(x^*x\sigma_s(a)\sigma_s(f)\sigma_s(a^*)x^*x))^{\frac{1}{4}}\\
&~(\text{by Cauchy-Schwarz inequality})\\
&=(\phi(\sigma_s(f)\sigma_s(a)\sigma_s(a^*)))^{\frac{1}{4}}(\phi(x^*x\sigma_s(a)\sigma_s(f)\sigma_s(a^*)x^*x))^{\frac{1}{4}}\\
&~(\text{since $\sigma_s(f)\in C(X)$ which commutes with $\mathcal{N}\ni \sigma_s(a)$})\\
&=(\phi(\sigma_s(f))\phi(\sigma_s(aa^*)))^{\frac{1}{4}}(\phi(x^*x\sigma_s(a)\sigma_s(f)\sigma_s(a^*)x^*x))^{\frac{1}{4}}\\
&=0~(\text{since $\phi(\sigma_s(f))=(\sigma_s(f))(x)=f(s^{-1}x)=0$}).
\end{align*}
Hence, $\tau(x\lambda(s)a)=0$ is proved.
\end{proof}

\begin{remark}
With the same proof of \cite[Theorem B]{AHO23}, the condition in Lemma \ref{lem:irreducibility} is satisfied for almost every $\mathcal{M}$. Then following the same proof as  Theorem~\ref{prop: main prop in appendix}, we may prove a general version of Theorem \ref{Thm C} in this paper. In particular, it follows that given an IRA $\mu$ supported on amenable subalgebras of $\mathcal{N}\rtimes\Gamma$, $\mu$-a.e $\mathcal{M}$ is contained in $\mathcal{N}\rtimes\text{Rad}(\Gamma)$.
\end{remark}

Next, we observe that  \cite[Theorem 1.4]{BC2015} can be strengthened to the following theorem. Note that the notion of singular subgroups was introduced in \cite[Definition 1.1]{BC2015}.

\begin{theorem}\label{thm: maximal inv subalgebras wrt singular subgroups}
Suppose that $\Gamma$ is a discrete countable group admitting an amenable, singular subgroup $\Lambda$. Then for any trace preserving action $\Gamma\curvearrowright (Q,\tau)$ on a finite von Neumann algebra, any $\Lambda$-invariant amenable von Neumann subalgebra inside $Q\rtimes \Gamma$ is contained in $Q\rtimes \Lambda$.
\end{theorem}
\begin{proof}
Let $A$ be any $\Lambda$-invariant von Neumann subalgebra in $M:=Q\rtimes \Gamma$. We aim to show $A\subseteq Q\rtimes \Lambda$.

The proof is the same as in \cite[Theorem 1.4]{BC2015} with only one modification. Recall that in \cite[Theorem 4.1]{BC2015}, the (stronger) assumption that $A$ contains $Q\rtimes \Lambda$ is only used to guarantee that the $A$-central state $\phi: B(L^2(Q)\bar{\otimes}\ell^2(\Gamma))\rightarrow \mathbb{C}$ with $\phi|_M=\tau$ satisfies that $\phi$ is $\Lambda$-central since $\Lambda\subset Q\rtimes \Lambda\subseteq A$. But under our weaker assumption, we can still find an $A$-central state $\tilde{\phi}$ such that $\tilde{\phi}|_M=\tau$ and meanwhile $\Lambda$-central. 

Indeed, we just need to start with an $A$-central state $\phi: B(L^2(Q)\bar{\otimes}\ell^2(\Gamma))\rightarrow \mathbb{C}$ with $\phi|_M=\tau$ and then define $\tilde{\phi}$ as any cluster point of the set of states $\{\phi_n: n\geq 1\}$, where $\phi_n(T):=\frac{\sum_{s\in F_n}\phi(\lambda(s^{-1})T\lambda(s)))}{\sharp F_n}$ for any $T\in B(L^2(Q)\bar{\otimes}\ell^2(\Gamma))$ and $\{F_n: n\geq 1\}$ is a F\o lner sequence for the amenable subgroup $\Lambda$. Then we can repeat the proof of \cite[Theorem 1.4]{BC2015} by replacing $\phi$ there with $\tilde{\phi}$ constructed here.
\end{proof}

The above theorem can be viewed as a partial, stronger version of Theorem~\ref{prop: main prop in appendix}. 

\begin{letterthm}
\label{thm:singularmaximal}
Suppose that $\Gamma$ is a discrete countable group admitting an amenable, singular subgroup. Then for any trace preserving action $\Gamma\curvearrowright (Q,\tau)$ on a finite von Neumann algebra, any $\Gamma$-invariant amenable von Neumann subalgebra inside $Q\rtimes \Gamma$ is contained in $Q\rtimes \text{Rad}(\Gamma)$.
\end{letterthm}
\begin{proof} This is a corollary of Theorem \ref{thm: maximal inv subalgebras wrt singular subgroups} once we  observe that $\text{Rad}(\Gamma)=\cap_{\Lambda\in J}\Lambda$, where $J$ is the collection of all amenable, singular subgroups in $\Gamma$.
Indeed, on the one hand, note that $\text{Rad}(\Gamma)$ is the largest normal amenable normal subgroup in $\Gamma$ and singularity property is invariant under group automorphism of $\Gamma$ and hence $\cap_{\Lambda\in J}\Lambda$ is a normal amenable subgroup in $\Gamma$. Therefore, $\text{Rad}(\Gamma)\supseteq \cap_{\Lambda\in J}\Lambda$. 
On the other hand,
$\text{Rad}(\Gamma)$ also equals the intersection of all maximal amenable subgroups in $\Gamma$ and every singular amenable subgroup is also maximal amenable by \cite[Lemma 1.2]{BC2015}, therefore, $\text{Rad}(\Gamma)\subseteq \cap_{\Lambda\in J}\Lambda$. Hence, we have proved that $\text{Rad}(\Gamma)=\cap_{\Lambda\in J}\Lambda$.  
\end{proof}

\begin{remark}
Note that not every non-amenable group $\Gamma$ admits an amenable singular subgroup. Indeed, let $\Gamma$ be a (non-amenable) Tarski monster group with the property that every proper non-trivial subgroup is a finite cyclic group, see \cite{OL79,OL80p1,OL80p2}. First note that $\Gamma$ contains no finite index subgroups and hence is I.C.C.
Then we claim that $\Gamma$ has no singular amenable subgroups. Assume not, then let  $\Lambda$ be an amenable singular subgroup. Since $\Lambda$ is maximal amenable by \cite[Lemma 1.2]{BC2015}, we deduce that $\Lambda$ is a proper non-trivial subgroup and thus $\Lambda=\frac{\mathbb{Z}}{n\mathbb{Z}}$ for some $n\geq 2$. To get a contradiction, we need to observe that $L(\Lambda)$ is not maximal amenable in $L(\Gamma)$. The reason is as follows:  we may first find $n$-many minimal projections, say $p_1,\ldots, p_n$ in $L(\Lambda)$. Note that $\tau(p_i)=\frac{1}{n}$ for all $1\leq i\leq n$ with respect to the canonical trace $\tau$ on $L(\Gamma)$. Then since $L(\Gamma)$ is a II$_1$ factor, there exists a von Neumann subalgebra $N\subset L(\Gamma)$ such that $N\cong M_n(\mathbb{C})$ and $L(\Lambda)\subsetneq N$. This shows that $L(\Lambda)$ is not maximal amenable in $L(\Gamma)$.   
\end{remark}

\end{document}